\newcommand{\RR}{\mathbb{R}}
\newcommand{\NN}{\mathbb{N}}
\newcommand{\ra}{\rightarrow}
\newcommand{\sph}{\mathbb{S}}
\newcommand{\mc}{\mathcal}
\newcommand{\pa}{\partial}
\newcommand{\lesa}{\lesssim}
\newcommand{\geqa}{\gtrsim}
\newcommand{\bs}{\backslash}
\newcommand{\ol}{\overline}
\newcommand{\LT}{\frac{\Lambda}{3}}
\newcommand{\mr}{\mathring}
\newcommand{\de}{\textnormal{d}}
\newcommand{\rc}{r_{\mathcal{C}}}
\newcommand{\rh}{r_{\mathcal{H}}}
\newcommand{\roc}{\ol{r_{\mathcal{C}}}}
\newcommand{\tg}{\tilde{g}}
\numberwithin{figure}{section}
\numberwithin{table}{section}
\numberwithin{equation}{section}
\newtheoremstyle{plainspace}%
{3pt}
{3pt}
{\it}
{\parindent}
{\bfseries}
{.}
{.5em}
{}
\theoremstyle{plainspace}%
\newtheorem{theorem}{Theorem}[section]
\newtheorem{proposition}[theorem]{Proposition~}%
\newtheorem{corollary}[theorem]{Corollary}%
\newtheorem{lemma}[theorem]{Lemma}
\theoremstyle{remark}%
\newtheorem{remark}[theorem]{Remark}%
\newtheorem{definition}[theorem]{Definition}%
\begin{document}

\title[Article Title]{Linear waves on the expanding region of Schwarzschild-de Sitter spacetimes: forward asymptotics and scattering from infinity}


\author{\fnm{Louie} \sur{Bernhardt}}

\affil{\orgdiv{School of Mathematics and Statistics}, \orgname{University of Melbourne}, \orgaddress{\city{Parkville}, \postcode{3010}, \state{VIC}, \country{Australia}}\email{lbernhardt@student.unimelb.edu.au}}


\abstract{We study solutions to the linear wave equation on the cosmological region of Schwarzschild-de Sitter spacetimes. We show that all sufficiently regular finite-energy solutions to the linear equation possess a particular finite-order asymptotic expansion near the future boundary. Specifically, we prove that several terms in this asymptotic expansion are identically zero. This is accomplished with new weighted higher-order energy estimates that capture the global expansion of the cosmological region. Furthermore we prove existence and uniqueness of scattering solutions to the linear wave equation on the expanding region. Given two pieces of scattering data at infinity, we construct solutions that have the same asymptotics as forward solutions. The proof involves constructing asymptotic solutions to the wave equation, as well as a new weighted energy estimate that is suitable for the backward problem. This scattering result extends to a large class of expanding spacetimes, including the Kerr de Sitter family.}

\maketitle
\section{Introduction}
In this paper we prove results relating to the asymptotic behaviour of solutions to the linear wave equation
\begin{equation}\label{eq:waveequation}
\square_g \psi = 0
\end{equation}
on $3+1$-dimensional Schwarzschild-de Sitter spacetimes. For an introduction to the global geometry of Schwarzschild-de Sitter, see \cite{LR77}. We focus on the \textit{expanding region} which we denote by $\mc{R}^+$; see Fig. \ref{fig:globalgeometrySdS}. The expanding region is bounded in the past by two cosmological horizons $\mc{C}^+,\ol{\mc{C}}^+$, and to the future by the future boundary $\Sigma^+$.  We introduce on $\mc{R}^+$ the standard coordinate chart $(r,t,\theta,\varphi) \in (r_\mc{C},\infty)\times\RR\times(0,\pi)\times(0,2\pi)$ and metric
$$g_{\Lambda,m} = -\frac{1}{\LT r^2 - 1 + \frac{2m}{r}}\de r^2 + \Big(\LT r^2 -1 +\frac{2m}{r}\Big) \de t^2 + r^2\>\de \theta^2 + r^2\sin^2\theta \>\de \varphi^2,$$
where $r_{\mc{C}}$ denotes the largest positive root of $\LT r^2 - 1 + \frac{2m}{r}$. For more properties of the expanding region, see \cite{Sch15}.

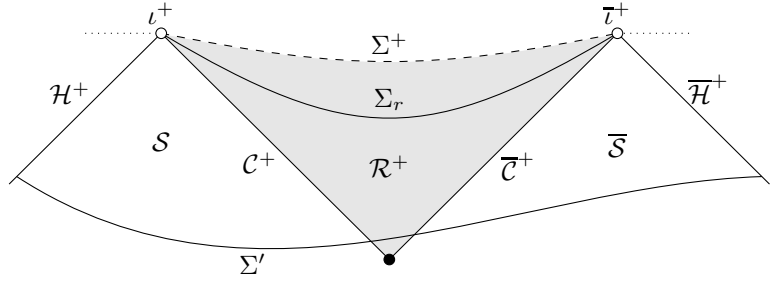
\begin{figure}[t]
\centering
\begin{tikzpicture}
\filldraw[fill=gray!20,draw=white] (0,3) -- (3,0) -- (6,3) .. controls (3.5,2.5) and (2.5,2.5) .. cycle;
\draw (0,3) -- (3,0) -- (6,3);
\draw[dashed] (0,3) .. controls (2.5,2.5) and (3.5,2.5) .. (6,3);
\draw[dotted] (-1,3) -- (0,3);
\draw[dotted] (6,3) -- (7,3);
\draw (-2,1) -- (0,3);
\draw (6,3) -- (8,1);
\filldraw  (3,0) circle (0.07cm);
\draw (-1.2,1.95) node[above]{$\mc{H^+}$};
\draw (7.2,1.95) node[above]{$\ol{\mc{H}}^+$};
\draw (1.3,1.55) node[below]{$\mc{C}^+$};
\draw (4.7,1.55) node[below]{$\ol{\mc{C}}^+$};
\draw (3,2.6) node[above]{$\Sigma^+$};
\draw (3,1.5) node[below]{$\mc{R}^+$};
\draw (0,1.8) node[below]{$\mc{S}$};
\draw (6,1.8) node[below]{$\ol{\mc{S}}$};
\draw (0,3) .. controls (2.5,1.5) and (3.5,1.5) .. (6,3);
\draw (3,1.85) node[above]{$\Sigma_r$};
\draw (0,3) node[above]{$\iota^+$};
\draw (6,3) node[above]{$\ol{\iota}^+$};
\filldraw[fill=white] (0,3) circle (0.07cm);
\filldraw[fill=white] (6,3) circle (0.07cm);
\draw (-1.9,1.1) .. controls (1.3,-1) and (4.7,1) .. (7.9,1.1);
\draw (1.2,0.2) node[below]{$\Sigma'$};

\end{tikzpicture}
\caption{Geometry of the expanding region $\mc{R}^+$ of Schwarzschild-de Sitter, shaded in gray here, which is bounded in the past by cosmological horizons $\mc{C}^+,\ol{\mc{C}}^+$, and to the future by $\Sigma^+$. One can also consider the Cauchy problem for the linear wave equation on the region $\mc{S}\cup\mc{R}^+\cup\ol{\mc{S}}$, where one sets initial data on a Cauchy surface like $\Sigma'$.}
\label{fig:globalgeometrySdS}
\end{figure}
A natural foliation of $\mc{R}^+$ is by the level sets of constant radius which we denote by $\Sigma_r$. These are spacelike on the interior of the expanding region. We note that the hypersurfaces $\Sigma_r$ and future boundary $\Sigma^+$ are all diffeomorphic to $\RR\times\sph^2$, terminating at $\iota^+$ on one end, and at $\ol{\iota}^+$ on the other. As such we will often identify functions on $\Sigma_r$ or $\Sigma^+$ with functions on the cylinder in this paper. Define the following energy:\footnote{We will discuss the geometric interpretation of this energy in Section~\ref{sec:notation}.}
\begin{equation}\label{eq:naturalenergy}
E[\psi](r) = \frac{1}{2}\int_{\Sigma_r} \phi^{-1}[(\pa_r\psi)^2 + |\ol{\nabla}\psi|^2]\de \mu_{\ol{g}_r},
\end{equation}
where
$$\phi = \frac{1}{\sqrt{\LT r^2 -1 + \frac{2m}{r}}}$$
is the \emph{lapse} for the $\Sigma_r$-foliation, $\ol{\nabla}\psi$ is the covariant derivative restricted to $\Sigma_r$, and $\de \mu_{\ol{g}_r}$ is the induced volume form on $\Sigma_r$.

We now present the two main theorems of this paper. The first theorem states that there is a \textit{finite-order asymptotic expansion} in powers of $r^{-1}$ for forward solutions to the linear wave equation on Schwarzschild-de Sitter.
\begin{theorem}[Forward asymptotics of linear waves on \texorpdfstring{$\mc{R}^+$}{the expanding region}]\label{thm:forwardasymptotics}
Let $\psi$ be a finite energy solution to \eqref{eq:waveequation} on $\mc{R}^+$, in the sense that on a level set $\Sigma_{r_0} \subset \mc{R}^+$ we have
\begin{equation}\label{eq:fafiniteenergy}
\sum_{k=0}^2 E[\pa_r^k \psi](r_0) < \infty.
\end{equation}
Then the following limits exist, where we emphasise that the limits are taken with respect to the norm of the homogeneous Sobolev space $\dot H^1(\RR\times\sph^2)$:
\begin{equation}\label{eq:asymptoticlimits}
\psi_0 = \lim_{r\ra\infty}\psi,\quad \psi_2 = \lim_{r\ra\infty}\Big[r^2(\psi-\psi_0)\Big],\quad \psi_3 = \lim_{r \ra \infty}\Big[r^3\Big(\psi-\psi_0-\frac{\psi_2}{r^2}\Big)\Big].
\end{equation}
\end{theorem}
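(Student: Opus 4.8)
The plan is to reduce the equation to a radial form, read off its asymptotic structure from an indicial analysis, and then upgrade the formal picture to genuine $\dot H^1$ limits by combining weighted higher-order energy estimates with successive renormalisations. Writing $\mu = \LT r^2 - 1 + \frac{2m}{r}$, so that $\phi^{-1}=\sqrt{\mu}$ and $\sqrt{|g|}=r^2\sin\theta$, the equation $\square_g\psi=0$ becomes
\[ \pa_r(\mu r^2\pa_r\psi) = \frac{r^2}{\mu}\pa_t^2\psi + \Delta_{\sph^2}\psi, \]
where $\Delta_{\sph^2}$ is the Laplacian on the unit round sphere. Substituting a formal series $\psi\sim\sum_k\psi_k r^{-k}$ and matching powers shows that the leading radial operator $\pa_r(\LT r^4\pa_r\,\cdot\,)$ has indicial roots $0$ and $3$, so $\psi_0$ and $\psi_3$ are the free data, whereas the recursion forces $\psi_1\equiv 0$ and slaves $\psi_2 = -\frac{3}{2\Lambda}\Big(\frac{3}{\Lambda}\pa_t^2 + \Delta_{\sph^2}\Big)\psi_0$ to $\psi_0$. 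This already tells us which coefficients must vanish; the work is to realise these as $\dot H^1$ limits for finite-energy solutions.

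The key analytic input, which I would establish first, is a family of weighted higher-order energy estimates showing that while the natural energy $E[\psi](r)$ itself grows like $r^2$ (because $\pa_t\psi$ and $\ol{\nabla}\psi$ tend to nonzero boundary gradients), the \emph{commuted} energies $E[\pa_r\psi](r)$ and $E[\pa_r^2\psi](r)$ stay finite and in fact decay polynomially as $r\ra\infty$; these are the estimates that encode the global expansion of $\mc{R}^+$. Granting this, the existence of $\psi_0$ follows from a Cauchy argument: from $\psi(r_2)-\psi(r_1)=\int_{r_1}^{r_2}\pa_r\psi\,\de r$ together with the pointwise-in-$r$ bound $\|\pa_r\psi(r,\cdot)\|_{\dot H^1(\RR\times\sph^2)}^2 \lesa r^{-2}E[\pa_r\psi](r)$ (valid for large $r$ since $\mu\g r^2$), any polynomial decay of $E[\pa_r\psi]$ makes $\int^\infty\|\pa_r\psi\|_{\dot H^1}\,\de r$ converge, so $\psi(r,\cdot)$ is Cauchy in $\dot H^1$.

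For the remaining two limits I would work with the flux $F := \mu r^2\pa_r\psi$, which obeys the transport equation $\pa_r F = \frac{r^2}{\mu}\pa_t^2\psi + \Delta_{\sph^2}\psi$. Since $\psi\ra\psi_0$, the right-hand side converges to $G:=\frac{3}{\Lambda}\pa_t^2\psi_0+\Delta_{\sph^2}\psi_0$, so $F$ grows at most linearly, $F=O(r)$; as $F\sim\LT r^4\pa_r\psi$ this forces $\pa_r\psi=O(r^{-3})$, hence $\psi-\psi_0=O(r^{-2})$, i.e. $\psi_1\equiv 0$. Integrating the transport equation then gives $F(r)=-\tfrac{2\Lambda}{3}\psi_2\,r+O(1)$, and dividing by $\mu r^2$ and integrating in $r$ yields $r^2(\psi-\psi_0)\ra\psi_2$ in $\dot H^1$; the crucial point is that in $\pa_r\big[r^2(\psi-\psi_0)\big]=2r(\psi-\psi_0)+r^2\pa_r\psi$ the two $O(r^{-1})$ pieces cancel precisely because $\psi_1=0$, leaving an increment that is integrable in $r$ once controlled by the decaying energies of $\pa_r\psi,\pa_r^2\psi$. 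Running the identical scheme on $r^3\big(\psi-\psi_0-\psi_2 r^{-2}\big)$, where the two subtractions are arranged to kill the $O(1)$ and $O(r^{-1})$ parts of the $r$-derivative, produces $\psi_3$.

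The hard part is not the formal expansion but its two hidden ingredients. First, proving the decay of $E[\pa_r\psi]$ and $E[\pa_r^2\psi]$ \emph{without} presupposing the asymptotics is the crux: the naive energy grows, so one genuinely needs weights tuned to the expansion, and the higher-order commuted estimates must close as a hierarchy using the equation to trade the dangerous $\pa_r$ for the bounded factor $\tfrac{r^2}{\mu}$ against angular and $t$ derivatives. Second, the successive renormalisations rest on exact cancellations of slowly-decaying terms — equivalently on $\psi_1\equiv 0$ and on $\psi_2$ being slaved to $\psi_0$ — which cannot be assumed but must be extracted from the transport equation for $F$; keeping the weights consistent across all three limits while using only two $r$-derivatives of energy is the principal bookkeeping obstacle.
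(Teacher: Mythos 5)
There is a genuine gap, and it sits exactly where you yourself flag ``the crux'': the weighted higher-order energy estimates are asserted, not supplied, and the specific mechanism that makes them close is the main content of the paper's proof. The paper does not commute with $\pa_r$ and $\pa_r^2$; it commutes with $X_s = r\phi^{-2}\pa_r$ and then $Y = r\phi^{-1}\pa_r$, drawn from the one-parameter family $f = r^2\phi^{-2}/(c_1+c_2r)$ chosen precisely so that the coefficient $\frac{f^2}{r^2}\pa_r^2\big(\frac{r^2\phi^{-2}}{f}\big)$ of the zeroth-order term $\pa_r\psi$ in $\square_g(f\pa_r\psi)$ vanishes identically (Lemma~\ref{lem:commutedwaveeq}). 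If you commute with plain $\pa_r$, that coefficient is $\frac{1}{r^2}\pa_r^2(\LT r^4 - r^2 + 2mr) = 4\Lambda + O(r^{-2})$, so the commuted equation acquires an $O(1)\,\pa_r\psi$ source whose contribution to the bulk integral carries the wrong $r$-weights relative to $E[\pa_r\psi]$ and does not close by a Gr\"onwall argument. A second missing ingredient is the treatment of $\pa_t^2\psi$: this quantity is not controlled pointwise by any of the energy densities, and the paper handles it by an elliptic estimate on $\Sigma_r$ combined with the identity $\ol{\Delta}\psi = \frac{1}{r^2}\pa_r(X_w\psi)$ and a standalone auxiliary estimate for the second commutator $X_w = r^2\phi^{-2}\pa_r$ (estimate \eqref{eq:hoestimateweak}), which loses a factor $r^4$ but is still good enough. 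Without these two devices your hierarchy of commuted estimates has no demonstrated closure, and the rest of the argument (which consumes their output) is conditional.

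Two further points. First, your premise that $E[\psi](r)$ grows like $r^2$ contradicts the redshift monotonicity \eqref{eq:redshift0} from \cite{Sch15}: the energy \eqref{eq:naturalenergy} is built with weights ($\phi^{-1}$, the induced metric and volume form on $\Sigma_r$) that make it nonincreasing in $r$; correspondingly the bound $\|\pa_r\psi\|_{\dot H^1}^2\lesa r^{-2}E[\pa_r\psi]$ you invoke is not justified (one only gets $\|\pa_r\psi\|_{\dot H^1}^2\lesa E[\pa_r\psi]$ from the definitions), though this is not fatal if the claimed polynomial decay were available. Second, your transport-equation analysis of $F=\mu r^2\pa_r\psi$ is a sound and rather clean heuristic --- note $F$ is exactly the paper's $X_w\psi$, and $\pa_r F = \frac{r^2}{\mu}\pa_t^2\psi + \Delta_{\sph^2}\psi$ is the identity the paper uses in \eqref{eq:laplacianest} --- and it correctly recovers $\psi_1\equiv 0$ and $\psi_2=-\frac{3}{2\Lambda}\tilde\Delta\psi_0$. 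But to run it rigorously in $\dot H^1$ you need convergence of $\pa_t^2\psi$ and $\Delta_{\sph^2}\psi$ with quantitative rates, i.e.\ control of two additional tangential derivatives beyond the hypothesis \eqref{eq:fafiniteenergy}; the paper avoids this derivative loss precisely through the elliptic estimate trading $\pa_t^2\psi$ for $\pa_r(X_w\psi)$. The final step of your scheme (FTC plus cancellation of the slowly decaying terms in $\pa_r[r^2(\psi-\psi_0)]$ and $\pa_r[r^3(\psi-\psi_0-\psi_2r^{-2})]$) does match the paper's Section~\ref{sec:forwardlimits}.
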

It immediately follows from Theorem \ref{thm:forwardasymptotics} that such solutions $\psi$ have asymptotics of the form
\begin{equation}\label{eq:asymptoticsintro}
\psi \sim \psi_0 + \frac{\psi_2}{r^2} + \frac{\psi_3}{r^3}
\end{equation}
for large $r$.
\begin{remark}This extends a result from \cite{Sch15}, where it was shown that $\psi$ has a nonzero limit on $\Sigma^+$ with respect to the $\dot{H}^1$ norm, proving the existence of just the $\psi_0$ term in the asymptotic expansion \eqref{eq:asymptoticsintro}. Similar asymptotic expansions to \eqref{eq:asymptoticsintro} were given in \cite{Vas10} for smooth solutions to the linear wave equation on a class of de Sitter like spacetimes that include Schwarzschild-de Sitter. In particular it was shown that on this class of spacetimes, smooth solutions $\psi$ to the linear wave equation have an asymptotic expansion of the form
\begin{equation}\label{eq:asymptoticsvasy}
\psi \sim \psi_0 + \frac{\psi_1}{r} + \frac{\psi_2}{r^2} + \frac{\log r\> \psi_{3,1}}{r^3} + \frac{\psi_3}{r^3}.
\end{equation}
However in \cite{Vas10} it is not specified whether all terms in this expansion are generically nonzero.\footnote{It is shown in \cite{Vas10} that if the metric of a de Sitter-like spacetime has a Taylor expansion near $\Sigma^+$ that contains only even powers of $r^{-1}$, then the $r^{-3}\log r\>\psi_{3,1}$ term is identically zero. However this does not apply to Schwarzschild-de Sitter due to the black hole mass, as one can see for instance that 
$$g_{tt} = \frac{\Lambda}{3}r^2-1+\frac{2m}{r} +O(r^{-2}).$$} Importantly, Theorem~\ref{thm:forwardasymptotics} implies that the $r^{-1}\psi_1$ and $r^{-3}\log r \>\psi_{3,1}$ terms in \eqref{eq:asymptoticsvasy} are absent for linear waves on Schwarzschild-de Sitter.\end{remark}
\begin{remark}
In the context of the Cauchy problem, where one poses data on a Cauchy surface such as $\Sigma'$ in Figure \ref{fig:globalgeometrySdS}, the bound \eqref{eq:fafiniteenergy} holds for sufficiently regular finite-energy solutions to the linear wave equation. This follows in particular from the works \cite{BH08,DR07,Dy11} which studied solutions to the linear wave equation on the \emph{static region} $\mc{S}$ of Schwarzschild-de Sitter (and the related stationary region of Kerr-de Sitter). For more recent work, see for example \cite{Ma23}. In fact, it is known that the static region $\mc{S}$ is nonlinearly stable as a solution to Einstein's vacuum equations with positive cosmological constant. This was proved in the seminal work \cite{HV18}, see also \cite{Fang23}.
\end{remark}

Moreover we consider the scattering problem for the linear wave equation on Schwarzschild-de Sitter. We show that $\psi_0,\psi_3$ in the asymptotic expansion \eqref{eq:asymptoticsintro} constitute the correct notion of scattering data, so that given the functions $\psi_0,\psi_3$ there exists a unique solution to \eqref{eq:waveequation} on $\mc{R}^+$.
\begin{theorem}[Scattering of linear waves on \texorpdfstring{$\mc{R}^+$}{the expanding region}]\label{thm:scatteringwithouthorizon}
Let $\psi_0\in H^4(\RR\times\sph^2),\psi_3\in H^2(\RR\times\sph^2)$. Then there exists a unique solution $\psi$ to the linear wave equation on the expanding region of Schwarzschild-de Sitter such that for all $r \geq r_{\mc{C}}+\epsilon$,
\begin{equation}\label{eq:scatteringenergybound}
E[\psi](r) \lesa \|\psi_0\|_{H^4(\RR\times\sph^2)}^2 + \|\psi_3\|_{H^2(\RR\times\sph^2)}^2.
\end{equation}
Moreover we have the following $L^\infty$ decay estimate for large $r$: 
\begin{equation}\label{eq:scatteringpointwisedecay}
\sup_{\Sigma_r}\Big|\psi - \psi_0 - \frac{\psi_2}{r^2}-\frac{\psi_3}{r^3}\Big| \lesa \frac{1}{r^4}\|\psi_0\|_{H^6(\RR\times\sph^2)} + \frac{1}{r^5}\|\psi_3\|_{H^4(\RR\times\sph^2)},
\end{equation}
provided the right hand side is finite. Here $\psi_2$ is a function determined entirely by $\psi_0$.
\end{theorem}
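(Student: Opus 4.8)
The plan is to construct $\psi$ as a correction to an explicit approximate solution that carries the data $\psi_0,\psi_3$ near $\Sigma^+$, and then to control the correction by a weighted energy estimate run \emph{backward} in the time function $r$ (recall the $\Sigma_r$ are spacelike and $\Sigma^+$ sits at $r\ra\infty$, so decreasing $r$ is the past direction). First I would write \eqref{eq:waveequation} explicitly as
\[
\square_g\psi = -\frac{1}{r^2}\pa_r\big(r^2\mu\,\pa_r\psi\big) + \frac{1}{\mu}\pa_t^2\psi + \frac{1}{r^2}\Delta_{\sph^2}\psi = 0,\qquad \mu := \LT r^2 - 1 + \frac{2m}{r},
\]
where $\Delta_{\sph^2}$ is the round Laplacian and $\phi=\mu^{-1/2}$. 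On powers $\psi=r^{-k}$ the leading radial operator is diagonal with indicial polynomial $\propto k(3-k)$, whose roots $k=0,3$ are exactly the free data. I would therefore seek $\psi_{\mathrm{app}}=\chi(r)\sum_{k=0}^N\psi_k r^{-k}$, with $\chi$ localising to large $r$, $\psi_0,\psi_3$ prescribed, $\psi_1=0$, and the remaining $\psi_k$ ($k\neq 0,3$) fixed recursively by $\tfrac{\Lambda}{3}k(3-k)\psi_k=-(\text{source from }\psi_{k-2},\psi_{k-3})$, so each is a fixed differential operator in $(t,\theta,\varphi)$ applied to $\psi_0$ or $\psi_3$; in particular $\psi_2=-\tfrac{3}{2\Lambda}\big(\tfrac{3}{\Lambda}\pa_t^2+\Delta_{\sph^2}\big)\psi_0$ is determined by $\psi_0$, as claimed. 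Because $\psi_0$ first sources order $r^{-2}$ and $\psi_3$ is an unobstructed indicial root, no $r^{-3}\log r$ term appears, consistently with Theorem~\ref{thm:forwardasymptotics}. Truncating at large $N$ leaves $F:=\square_g\psi_{\mathrm{app}}=O(r^{-N'})$, with the Sobolev norms of $\psi_k$ bounded by $\|\psi_0\|_{H^{k}}+\|\psi_3\|_{H^{k-2}}$.

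\textbf{Backward energy estimate and existence.} Writing $\psi=\psi_{\mathrm{app}}+u$ reduces matters to solving $\square_g u=-F$ with data vanishing at $\Sigma^+$. The key step is a weighted form of the identity for \eqref{eq:naturalenergy}: contracting $\square_g u=-F$ with a multiplier $w(r)\pa_r$ (plus a zeroth-order modification) and integrating over $\{r_1\les r\les r_2\}$, I would choose the weight $w$ so that the spacetime bulk term carries the sign permitting integration from $r_2=\infty$ down to $r_1=\rc+\epsilon$, where $\mu$ is bounded below and the horizon degeneracy is absent. Commuting with $\pa_r$, $\pa_t$ and the angular derivatives then controls $\sum_{k=0}^2 E[\pa_r^k u](r)$ by $\int_r^\infty w\,\|F\|^2$, which converges by the decay of $F$. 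Existence follows by solving the backward Cauchy problem with data $\psi_{\mathrm{app}}$ on $\Sigma_{R_n}$, $R_n\ra\infty$, and passing to the limit using the uniform-in-$n$ estimate; the limit solves \eqref{eq:waveequation} and obeys \eqref{eq:scatteringenergybound} at regularity $H^4,H^2$.

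\textbf{Pointwise decay and uniqueness.} For \eqref{eq:scatteringpointwisedecay} I would subtract the three-term expansion, so that $\psi-\psi_0-\psi_2 r^{-2}-\psi_3 r^{-3}=u+\sum_{k\g 4}\chi\,\psi_k r^{-k}$; the slowest surviving term is $\psi_4 r^{-4}$ (four derivatives of $\psi_0$), then $\psi_5 r^{-5}$ (two derivatives of $\psi_3$), with $u$ strictly lower order. The embedding $H^2(\RR\times\sph^2)\hookrightarrow L^\infty$ then gives $r^{-4}\|\psi_0\|_{H^6}+r^{-5}\|\psi_3\|_{H^4}$, accounting for the two extra derivatives and the finiteness proviso. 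Uniqueness comes from applying the scheme to the difference $w$ of two solutions with identical data: then $w_0=w_2=w_3=0$, Theorem~\ref{thm:forwardasymptotics} forces all listed asymptotic coefficients of $w$ to vanish, and the backward estimate with zero source gives $E[w]\equiv 0$, hence $w\equiv 0$.

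\textbf{Main obstacle.} The heart of the argument is the backward estimate. The cosmological expansion damps energy for the forward problem but amplifies it backward, so a naive $\pa_r$-multiplier produces a bulk term of the wrong sign; the weight $w(r)$ and the zeroth-order correction must be tuned to render the spacetime term controllable while keeping the flux terms coercive and compatible with the merely $O(r^{-4})$ decay of the genuine remainder. Making the commuted higher-order estimates close with exactly the claimed regularity ($H^4,H^2$, respectively $H^6,H^4$) is where the principal difficulty lies.
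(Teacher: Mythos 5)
Your proposal follows essentially the same route as the paper: an asymptotic solution built from the indicial roots $k=0,3$ of the radial operator (with $\psi_1=0$, no $r^{-3}\log r$ term, and $\psi_2=-\tfrac{3}{2\Lambda}\tilde{\Delta}\psi_0$ exactly as in Lemma~\ref{lem:recurrencerelations}), a backward weighted energy estimate applied to the remainder, a limit of finite problems posed on $\Sigma_{R_n}$, and Killing commutation plus Sobolev embedding for the pointwise bound; the minor variations (a cutoff $\chi(r)$, carrying the expansion to order $N$ instead of stopping at $N=3$, commuting with $\pa_r$) are not needed but do no harm. The one substantive step you leave open --- the choice of weight making the bulk term signed --- is resolved in the paper by taking $M=r^2\phi^{-2}\pa_r$ with \emph{no} zeroth-order modification: Proposition~\ref{prop:vfbackwardenergy} shows $K^M[\psi]\le 0$ pointwise on $\mc{R}^+$, so $r^2(E[\psi](r))^{1/2}$ is monotone decreasing backwards-in-time and the inhomogeneity enters only through $\int\|r\phi^{1/2}F\|_{L^2(\Sigma_r)}\,\de r$, which converges precisely because $\square_g\psi_{\text{asymp}}=O(r^{-4})$.
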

\begin{remark}
The fact that there are two pieces of scattering data are due to the spacelike nature of $\Sigma^+$. For the corresponding Cauchy problem, one prescribes two pieces of data on a spacelike hypersurface which are the solution and its normal derivative restricted to $\Sigma^+$. In Theorem~\ref{thm:scatteringhorizon}, $\psi_0$, $\psi_3$ are the two pieces of data, with $\psi_3$ playing the role of the normal derivative.\footnote{However $\psi_3$ is not strictly speaking the normal derivative of the solution $\psi$, as any radial derivative $\psi$ would contain also the term $\psi_2$, which is determined by $\psi_0$. See the discussion in Remark~\ref{rmk:normalderiv} below.}
\end{remark}
\begin{remark}For related scattering results, we mention again \cite{Vas10}, where it was shown that given smooth functions $\psi_0,\psi_3$, there exists a unique smooth solution to the linear wave equation on a class of de Sitter-like spacetimes. See in particular the discussion below Theorem~1.2 in \cite{Vas10}. In \cite{Cic23}, a scattering theory was established for the linear wave equation on de Sitter spacetimes with even number of spatial dimensions.\end{remark}
\begin{remark}
Scattering problems have also been considered for linear wave equations on asymptotically flat black hole spacetimes, see \cite{BW14,DRSR18,KSR19}. The scattering problem for Einstein's vacuum equation with positive cosmological constant has also been treated, see for example \cite{Hintz23}, while for scattering of Einstein's vacuum equations in the asymptotically flat setting, see \cite{DHR13}.
\end{remark}
\subsection{Existence of solutions to the linear wave equation with prescribed scattering data on \texorpdfstring{$\Sigma^+$}{Sigma plus}}\label{sec:scatteringintro}
We give a complete treatment of scattering of wave equations on Schwarzschild-de Sitter in \textbf{Section~\ref{sec:scattering}}. We prove Theorem~\ref{thm:scatteringwithouthorizon} by first constructing an asymptotic solution to the wave equation of the form
$$\psi_{\text{asymp}}(t,\omega) = \psi_0(t,\omega) + \frac{\psi_2(t,\omega)}{r^2} + \frac{\psi_3(t,\omega)}{r^3},$$
where $\psi_0$, $\psi_3$ are freely chosen and $\psi_2$ is determined by $\psi_0$.\footnote{We are not constructing smooth scattering solutions to the wave equation by performing an infinite power series expansion in $r^{-1}$, although if one does this it turns out that $\psi_0$, $\psi_3$ are necessarily the leading order terms in such an expansion that can be freely chosen.} Clearly $\psi_{\text{asymp}}$ captures the asmyptotics of forward solutions proved in Theorem~\ref{thm:forwardasymptotics}, but moreover the asymptotic solution solves the wave equation up to some error that decays quickly in $r$, specifically we have $\square_g \psi_{\text{asymp}} = O(r^{-4})$.

Then we showing that there exists an actual solution to $\square_g \psi = 0$ that converges to that asymptotic solution. The key tool we use to prove this is the following \text{weighted energy estimate} suitable for the backward problem:
\begin{proposition}[Weighted energy estimate for the backward problem]\label{prop:backwardenergyestimate}
Suppose $\psi$ is a solution to the inhomogeneous wave equation 
\begin{equation}\label{inhomwav}
\square_g \psi = F
\end{equation} 
in the expanding region $\mc{R^+}$ of Schwarzschild-de Sitter spacetime. Then for all $r_2>r_1>r_\mc{C}$,
\begin{equation}\label{eq:backwardenergyestimate}
r_1^2(E[\psi](r_1))^{1/2} \leq r_2^2(E[\psi](r_2))^{1/2}  + \frac{1}{\sqrt{2}}\int_{r_1}^{r_2} \Big(\int_{\Sigma_r}r^2\phi F^2 \de \mu_{\ol{g}_r}\Big) \de r,
\end{equation}
provided the right hand side is finite.
\end{proposition}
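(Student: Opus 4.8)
The plan is to deduce the integral estimate \eqref{eq:backwardenergyestimate} from a pointwise-in-$r$ differential inequality for the weighted quantity $r^2(E[\psi](r))^{1/2}$. Concretely, I would first establish that for $r>\rc$,
\[
\frac{\de}{\de r}\Big(r^2\,(E[\psi](r))^{1/2}\Big) \g -\frac{1}{\sqrt 2}\Big(\int_{\Sigma_r} r^2\phi\, F^2\,\de\mu_{\ol g_r}\Big)^{1/2},
\]
and then integrate from $r_1$ to $r_2$: the left-hand side telescopes to $r_2^2(E[\psi](r_2))^{1/2}-r_1^2(E[\psi](r_1))^{1/2}$, while the right-hand side reproduces the $F$-integral in \eqref{eq:backwardenergyestimate}. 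Since $E[\psi]$ is manifestly non-negative, this is the natural square-root analogue of a Gr\"onwall argument, and the entire content lies in the differential inequality itself.

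To obtain it I would differentiate $E[\psi](r)$ directly in $r$. Because the lapse $\phi$ and the metric coefficient $f=\phi^{-2}$ depend only on $r$, the coordinate measure on $\Sigma_r\cong\RR\times\sph^2$ factorises and one may differentiate under the integral sign. The term involving the second radial derivative $\partial_r^2\psi$ is eliminated using \eqref{inhomwav} written in divergence form,
\[
\partial_r\big(r^2 f\,\partial_r\psi\big) = \frac{r^2}{f}\,\partial_t^2\psi + \Delta_{\sph^2}\psi - r^2 F,
\]
which trades $\partial_r^2\psi$ for tangential derivatives of $\psi$ and for $F$. The resulting cross terms $\partial_r\psi\,\partial_t^2\psi$ and $\partial_r\psi\,\Delta_{\sph^2}\psi$ are integrated by parts in $t$ and over $\sph^2$ respectively; the boundary contributions at the ends $\iota^+,\ol{\iota}^+$ of the cylinder vanish for finite-energy solutions. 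After collecting terms, $\frac{\de}{\de r}E[\psi]$ splits into a homogeneous bulk $B[\psi](r)$, built from $(\partial_r\psi)^2$, $(\partial_t\psi)^2$ and $|\ol\nabla\psi|^2$ with explicit $r$-dependent coefficients involving $f$ and $f'$, together with a single inhomogeneous pairing proportional to $\int_{\Sigma_r}r^2 F\,\partial_r\psi\,\de t\,\de\omega$.

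The main obstacle is the sign of the homogeneous part: I must show $B[\psi](r)+\tfrac{4}{r}E[\psi](r)\g 0$, which is exactly the statement that $r^2(E[\psi])^{1/2}$ is non-decreasing in the source-free case and encodes the global expansion of $\mc R^+$. This does not follow from termwise positivity, since the coefficient of $(\partial_r\psi)^2$ is sign-indefinite; I would therefore exploit the explicit form $f=\LT r^2-1+\tfrac{2m}{r}$, using $f,f'>0$ on $(\rc,\infty)$ and the identity $2f-rf'=-2+\tfrac{6m}{r}$, and complete squares to absorb the radial term into the manifestly positive temporal and angular contributions. Granting this coercivity, the inhomogeneous term is controlled by the Cauchy--Schwarz inequality, pairing $F$ against $\partial_r\psi$ and using that the radial part of the energy satisfies $\tfrac12\int_{\Sigma_r}\phi^{-1}(\partial_r\psi)^2\,\de\mu_{\ol g_r}\le E[\psi]$; this produces a factor $(E[\psi])^{1/2}$ which cancels the $E^{-1/2}$ generated by differentiating $(E[\psi])^{1/2}$, leaving the weighted flux of $F$ and the sharp constant $1/\sqrt2$ displayed in \eqref{eq:backwardenergyestimate}. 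Integrating the differential inequality then yields the proposition.
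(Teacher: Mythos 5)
Your overall architecture (a differential inequality in $r$ for a weighted square root of the energy, then integration) is the same as the paper's, which derives exactly such an inequality for $\|\psi\|_{M,\Sigma_r^c}$ and integrates it to get \eqref{eq:backwardestimatelocal}. The gap is in the coercivity step, and it is not a technicality: the claim $B[\psi](r)+\tfrac{4}{r}E[\psi](r)\g 0$, i.e.\ that $r^4E[\psi]$ is non-decreasing for source-free solutions, is \emph{false}. Both $\frac{\de}{\de r}E[\psi]$ and $\frac{4}{r}E[\psi]$ are \emph{diagonal} quadratic forms in $(\pa_r\psi,\pa_t\psi,\slashed\nabla\psi)$ — there are no cross terms — so there is nothing to complete a square against, and the sign is decided coefficient by coefficient. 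The $(\pa_r\psi)^2$ coefficient of $\tfrac{4}{r}J^{\pa_r}[\psi]\cdot n-\phi K^{\pa_r}[\psi]$ works out to $\tfrac{\phi}{r}\big(\tfrac{3m}{r}-1\big)$, which is strictly negative on $\mc{R}^+$ since $r>\rc>3m$. A concrete counterexample is the $t$- and angle-independent solution with $\pa_r\psi=c\,r^{-2}\phi^{2}$ (so that $\pa_r(r^2\phi^{-2}\pa_r\psi)=0$): for it one computes $r^4E[\psi]\propto\big(\tfrac{\Lambda}{3}-\tfrac{1}{r^2}+\tfrac{2m}{r^3}\big)^{-1}$, which is strictly \emph{decreasing} in $r$, whereas $r^2\phi^{-2}E[\psi]$ is exactly constant. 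The correct monotone weight is therefore $r^2\phi^{-2}=\tfrac{\Lambda}{3}r^4-r^2+2mr$, not $r^4$; equivalently, the right multiplier is $M=r^2\phi^{-2}\pa_r$, for which the paper shows the $(\pa_r\psi)^2$ coefficient of $K^M$ cancels identically and $K^M[\psi]=-2r(\pa_t\psi)^2-r\big(\tfrac{2\Lambda}{3}r^2-1+\tfrac{m}{r}\big)|\slashed\nabla\psi|^2\les 0$ pointwise. With your weight you could still salvage the estimate up to a multiplicative constant $C>1$ via Gr\"onwall, since the deficit is $O(r^{-3})$ relative to the energy density and hence integrable, but not with the sharp constant $1$ your differential inequality asserts; to get constant $1$ you must replace $r^2$ by $r\phi^{-1}$ throughout.

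A secondary issue: you integrate by parts in $t$ on the non-compact cylinder and assert the boundary contributions at $\iota^+,\ol{\iota}^+$ vanish for finite-energy solutions. They need not vanish; finiteness of $E[\psi]$ does not force $\pa_r\psi\,\pa_t\psi\ra 0$ as $t\ra\pm\infty$. The paper avoids this by running the divergence identity on the capped domains $\mc{D}_{r_1,r_2}^{(\ol{u},\ol{v})}$ bounded by null hypersurfaces, observing that the null fluxes $\int{}^*J^M[\psi]$ are non-negative and sit on the side of the identity where they can be discarded without breaking the inequality, and only then sending $\ol{u},\ol{v}\ra 0$. You should incorporate this limiting argument rather than dismissing the boundary terms.
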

We prove Proposition~\ref{prop:backwardenergyestimate} by identifying a vectorfield multliplier $M = r^2\phi^{-2}\pa_r$ that produces a \emph{nonpositive} bulk integral in the energy estimate; see Section~\ref{sec:backwardestimate} for relevant definitions. We prove this implies that the weighted energy $r^2E[\psi]$ is monotone decreasing \emph{backwards-in-time}\footnote{This is in contrast to the redshift estimate \eqref{eq:redshift0} from \cite{Sch15}, where a vectorfield was identified that produces a \emph{nonnegative} bulk integral, yielding an energy $E[\psi]$ that is monotonically decreasing forwards-in-time. These two energy estimates can also be compared to the standard energy estimate for the wave equation on Minkowski spacetime. There one can apply the timelike Killing vectorfield multiplier $T = \frac{\pa}{\pa t}$, which yields an energy that is conserved, rather than just monotone.}.

We will apply the weighted energy estimate from Proposition~\ref{prop:backwardenergyestimate} not to the solution itself, but to a fast-decaying remainder $\psi_{\text{rem}}$ that is obtained by subtracting from the true solution an asymptotic solution $\psi_{\text{asymp}}$ constructed in Section~\ref{sec:asympsol}. This asymptotic solution captures the leading order asymptotics of the solution.
\begin{remark}
This approach to constructing scattering solutions is inspired by \cite{LS23}, where scattering problems for nonlinear wave equations on Minkowski are treated. In that paper asymptotic solutions are constructed from a radiation field prescribed at null infinity, and then a fast-decaying remainder is estimated backwards-in-time via a \emph{fractional Morawetz estimate}, which plays the same role as Proposition \ref{prop:backwardenergyestimate} in the present paper. For other papers where this method is used for constructing scattering solutions to wave equations and other dispersive equations, see for example \cite{He21,Yu22}.
\end{remark}
\paragraph{Extension to the cosmological horizon} While Theorem~\ref{thm:scatteringwithouthorizon} guarantees existence of solutions to $\square_g \psi = 0$ on the interior of the expanding region, the energy estimate from Proposition~\ref{prop:backwardenergyestimate} degenerates at the cosmological horizons, and does not guarantee that solutions remain uniformly bounded on the cosmological horizons. 

We define a nondegenerate energy on the cosmological horizons. We denote this energy by $E^N[\psi]$. The full definition of $E^N[\psi]$ can be found in Section~\ref{sec:nondegcurrent}, but for now we mention that on the cosmological horizons:
$$E^N[\psi] = \frac{1}{2}\int_{\mc{C}^+\cup \ol{\mc{C}}^+} (T\psi)^2 + |\slashed\nabla\psi|^2,$$
where $\slashed\nabla$ denotes the covariant derivative on the sphere of radius $r$.

In order to solve all the way to the cosmological horizons $\mc{C}^+\cup \ol{\mc{C}}^+$ in a nondegenerate energy space, we additionally assume exponential decay of the data $\psi_0$, $\psi_3$ along the future boundary $\Sigma^+$ towards $\iota^+$ and $\ol{\iota}^+$.
\begin{theorem}[Scattering up to the cosmological horizon]\label{thm:scatteringhorizon}
Let $\psi_0,\psi_3$ be scattering data defined on the future boundary $\Sigma^+$ such that $\psi_0 \in H^4(\RR\times\sph^2),\psi_3\in H^2(\RR\times\sph^2)$. Moreover, assume that $\psi_0$, $\psi_3$ decay exponentially, in that there exists some sufficiently large constant $\beta = \beta(\Lambda,m) > 0$ such that as $\tau \ra \infty$ we have
\begin{equation}\label{eq:expdecayassumption}\|\psi_0\|_{H^4((-\tau,\tau)^c\times \sph^2)},\|\psi_3\|_{H^2((-\tau,\tau)^c\times \sph^2)} \lesa_{\Lambda,m} e^{-\beta \tau}.
\end{equation}
Then there exists a unique solution $\psi$ to \eqref{eq:waveequation} on $\mc{R}^+\cup\mc{C}^+ \cup \ol{\mc{C}}^+$ with scattering data $\psi_0,\psi_3$, and at the cosmological horizon, we have the energy estimate
\begin{equation}\label{eq:scatteringhorizonbound}
E^N[\psi] \lesa_{\Lambda,m} \|\psi_0\|_{H^4(\RR\times\sph^2)}^2 + \|\psi_3\|_{H^2(\RR\times\sph^2)}^2.
\end{equation}
\end{theorem}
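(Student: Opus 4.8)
The plan is to localise the analysis to a neighbourhood of each cosmological horizon and to upgrade the degenerate interior control furnished by Theorem~\ref{thm:scatteringwithouthorizon} to the nondegenerate energy $E^N$. First I would invoke Theorem~\ref{thm:scatteringwithouthorizon} to produce, for the data $\psi_0\in H^4(\RR\times\sph^2)$ and $\psi_3\in H^2(\RR\times\sph^2)$, the unique solution $\psi$ on the interior $\mc{R}^+$ together with the bound \eqref{eq:scatteringenergybound}; it then remains only to control $\psi$ and its derivatives all the way to $\mc{C}^+\cup\ol{\mc{C}}^+$. Since both $E[\psi]$ and the backward estimate of Proposition~\ref{prop:backwardenergyestimate} degenerate as $r\ra r_\mc{C}$ (the lapse $\phi$ blows up there), the natural device is a redshift-type vectorfield multiplier $N$: future-directed, timelike, transversal to the horizon, and built so that near $\mc{C}^+$ the associated bulk current dominates the full first-order energy density, exploiting the positivity of the surface gravity $\kappa=\kappa(\Lambda,m)$ of the cosmological horizon. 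The resulting $N$-energy is comparable to $E^N$, so it suffices to bound the former on $\mc{C}^+\cup\ol{\mc{C}}^+$.

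The core of the argument is an energy identity for $N$ applied not to $\psi$ directly but to the fast-decaying remainder $\psi_{\text{rem}}=\psi-\psi_{\text{asymp}}$, with $\psi_{\text{asymp}}$ the asymptotic solution of Section~\ref{sec:asympsol}; this reduces the source to $\square_g\psi_{\text{rem}}=-\square_g\psi_{\text{asymp}}=O(r^{-4})$, which is integrable in $r$ down to the horizon. I would integrate the divergence of the current $J^N[\psi_{\text{rem}}]$ over $\{r_\mc{C}\le r\le R\}$, truncated in the cylinder direction, so that the boundary terms are the $N$-flux through $\Sigma_R$ (controlled through \eqref{eq:scatteringenergybound} after commuting with $T$ and the rotations), the $N$-flux through $\mc{C}^+\cup\ol{\mc{C}}^+$ (comparable to $E^N$), and the flux through the truncation surfaces near the corners $\iota^+,\ol{\iota}^+$. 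Commuting the equation with the Killing field $T$ and the rotation vectorfields of the round sphere — which is what consumes the derivative budget in passing from the $H^4$, $H^2$ data to the first-order horizon energy $E^N$ — and using the favourable sign of the redshift bulk term, one extracts a bound for $E^N$ in terms of the interior data and the corner contributions.

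The hard part is the behaviour at the corners $\iota^+$ and $\ol{\iota}^+$, where the cosmological horizon meets $\Sigma^+$ and where the $\Sigma_r$-foliation and the regular horizon coordinates degenerate simultaneously. There the backward evolution exhibits a blueshift, dual to the redshift that stabilises the forward problem, whose amplification rate along the horizon generators is set by $\kappa(\Lambda,m)$; left unchecked the flux integrals accumulate exponentially in the cylinder coordinate as one approaches $\iota^+$ and $\ol{\iota}^+$. This is precisely the role of hypothesis \eqref{eq:expdecayassumption}: choosing $\beta=\beta(\Lambda,m)$ larger than the blueshift rate forces $\psi_{\text{asymp}}$, and hence $\psi$, to decay exponentially towards the corners, so that the truncation fluxes vanish in the limit and $E^N$ is finite and bounded by $\|\psi_0\|_{H^4(\RR\times\sph^2)}^2+\|\psi_3\|_{H^2(\RR\times\sph^2)}^2$, giving \eqref{eq:scatteringhorizonbound}. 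Uniqueness follows by applying the same redshift identity to the difference of two solutions sharing the data $\psi_0,\psi_3$, for which every boundary contribution vanishes. I expect managing this blueshift against the exponential decay — together with the careful treatment of the degenerate corner geometry — to be the principal technical difficulty.
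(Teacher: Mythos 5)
Your overall architecture matches the paper's: take the interior solution from Theorem~\ref{thm:scatteringwithouthorizon}, introduce a redshift-type vectorfield $N=Y+T$ near the horizon, and play the exponential decay of the data against the blueshift. However, there are two genuine gaps.

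First, your claim that one can close the $N$-energy identity ``using the favourable sign of the redshift bulk term'' is wrong for the backward problem, and is inconsistent with your own (correct) later remark that the backward evolution sees a blueshift. For the forward problem $K^N$ is coercive and can be discarded; for the backward problem the best one gets (Proposition~\ref{prop:ncurrent}) is $K^N[\psi]\leq \frac{c}{u}J^N\cdot\partial_v$, i.e.\ the bulk is bounded \emph{above} by the flux density and sits on the wrong side of the identity. One must therefore foliate a horizon neighbourhood by the transversal null surfaces $C_\tau$ and run a Gr\"onwall argument in the generator parameter $\tau$ (Proposition~\ref{prop:backwardesthorizon}), which produces the amplification factor $e^{c(\tau_2-\tau_1)}$ along the \emph{entire} horizon, not merely ``at the corners''. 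Your proposed single integration over $\{r_\mc{C}\le r\le R\}$ with a sign argument does not produce a bound; the exponential loss is unavoidable and must be beaten quantitatively.

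Second, the assertion that hypothesis \eqref{eq:expdecayassumption} ``forces $\psi_{\text{asymp}}$, and hence $\psi$, to decay exponentially towards the corners'' hides a nontrivial step: exponential decay of the solution (not just of the explicit asymptotic part) along $\Sigma_{r_0}$ must be \emph{propagated} from the data, which in the paper is a separate result (Proposition~\ref{prop:localexpdecay}) proved via the localised version of the backward estimate \eqref{eq:backwardestimatelocal}. Only with that in hand can one set up the exhaustion by cutoff problems $\psi_T$, show the sequence is Cauchy in the nondegenerate norm, and verify that the Gr\"onwall growth $e^{c\tau}$ is dominated by the decay $e^{-2\beta\tau}$ provided $\beta>c/2$ (ultimately $\beta>\kappa_{\mc{C}}/2$), cf.\ \eqref{eq:betarestriction}. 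This quantitative competition, together with the propagation-of-decay lemma, is the actual content of the proof and is missing from your sketch. (Minor points: near the horizon $r$ is bounded, so integrability of $\square_g\psi_{\text{asymp}}$ in $r$ is not the relevant issue there, and the paper in fact solves the \emph{homogeneous} equation backwards from $\Sigma_{r_0}$ rather than carrying the remainder equation to the horizon; the bifurcation sphere also needs a separate regularisation of $N$.)
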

\begin{remark}
The infimum of all values of the constant $\beta$ for which Theorem~\ref{thm:scatteringhorizon} holds is $\frac{1}{2}\kappa_{\mc{C}}$, where $\kappa_{\mc{C}}$ is the \emph{surface gravity} of the cosmological horizons. This requirement of a sufficient degree of exponential decay is tied to the \emph{blueshift effect}, a property of Killing horizons such as cosmological horizons. The previously-discussed redshift effect leads to boundedness of waves forwards-in-time along Killing horizons. However for the backwards problem this is seen as a blueshift, which leads to exponential growth of waves backwards-in-time. While it is primarily discussed in the setting of black holes in asymptotically flat spacetimes (see \cite{DHR13,DRSR18,Sb15}), the blueshift effect is present for all nondegenerate Killing horizons.
\end{remark}
By applying the pointwise boundedness estimate found in Corollary 2.12 in \cite{Sch15}, it follows that the solutions constructed in Theorem~\ref{thm:scatteringhorizon} are uniformly bounded on $\ol{\mc{R}}^+$. 
\begin{corollary}\label{crl:pwbound}
Let $\psi_0,\psi_3,\psi$ be as in Theorem~\ref{thm:scatteringwithouthorizon}, with $\psi_0 \in H^6(\RR\times\sph^2)$,$\psi_3\in H^4(\RR\times\sph^2)$.Then we have have the pointwise bound away from the cosmological horizons:
\begin{equation}
\sup_{\{r \geq \rc + \epsilon\}}|\psi| \lesa_{\Lambda,m,\epsilon} \|\psi_0\|_{H^6(\RR\times\sph^2)} + \|\psi_3\|_{H^4(\RR\times\sph^2)}.
\end{equation}
If additionally $\psi_0$, $\psi_3$ decay exponentially along $\Sigma^+$ like in Theorem~\ref{thm:scatteringhorizon}, then the solution $\psi$ is uniformly bounded up to the horizon:
\begin{equation}\label{eq:pointwiseboundeverywhere}
\sup_{\ol{\mc{R}}^+}|\psi| \lesa_{\Lambda,m} \|\psi_0\|_{H^6(\RR\times\sph^2)} + \|\psi_3\|_{H^4(\RR\times\sph^2)}.
\end{equation}
\end{corollary}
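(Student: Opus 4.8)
The plan is to prove Corollary~\ref{crl:pwbound} by combining the solution existence and energy bounds from Theorems~\ref{thm:scatteringwithouthorizon} and~\ref{thm:scatteringhorizon} with the pointwise boundedness estimate cited from \cite{Sch15}. The strategy splits naturally into two regimes: the region away from the horizons $\{r \geq \rc + \epsilon\}$, and the full closure $\ol{\mc{R}}^+$ including the cosmological horizons.

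\medskip
\noindent\textbf{Step 1: Pointwise bound away from the horizons.} First I would recall the pointwise boundedness estimate from Corollary~2.12 in \cite{Sch15}, which should control $\sup|\psi|$ on a level set $\Sigma_r$ (or on $\{r \geq \rc+\epsilon\}$) in terms of a higher-order energy of $\psi$. The essential point is that a Sobolev-type embedding on the cylinder $\RR\times\sph^2$ trades regularity for pointwise control, so the bound will involve $E[\pa_r^k\psi]$ for $k$ up to $2$ (matching the finite-energy hypothesis \eqref{eq:fafiniteenergy}). The solution $\psi$ produced by Theorem~\ref{thm:scatteringwithouthorizon} satisfies the uniform-in-$r$ energy bound \eqref{eq:scatteringenergybound}, and under the strengthened regularity assumption $\psi_0 \in H^6$, $\psi_3 \in H^4$ the same argument (applied to the derivatives $\pa_r^k\psi$, whose scattering data are controlled by correspondingly higher Sobolev norms of $\psi_0,\psi_3$) furnishes uniform control of the higher-order energies. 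Feeding these into the \cite{Sch15} pointwise estimate yields
\begin{equation*}
\sup_{\{r\geq\rc+\epsilon\}}|\psi| \lesa_{\Lambda,m,\epsilon} \|\psi_0\|_{H^6(\RR\times\sph^2)} + \|\psi_3\|_{H^4(\RR\times\sph^2)},
\end{equation*}
which is the first claimed inequality.

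\medskip
\noindent\textbf{Step 2: Extending up to the horizon.} To obtain the bound on all of $\ol{\mc{R}}^+$, I would invoke the additional exponential-decay hypothesis \eqref{eq:expdecayassumption}, which by Theorem~\ref{thm:scatteringhorizon} guarantees the solution extends to $\mc{R}^+\cup\mc{C}^+\cup\ol{\mc{C}}^+$ with the nondegenerate horizon energy bound \eqref{eq:scatteringhorizonbound}. Near the horizons the level-set energy $E[\psi]$ degenerates, so one cannot directly run the $\Sigma_r$ argument; instead the nondegenerate energy $E^N[\psi]$, controlled by \eqref{eq:scatteringhorizonbound}, together with its higher-order analogues (again bounded using $\psi_0\in H^6$, $\psi_3\in H^4$), provides the replacement. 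Applying the pointwise boundedness estimate of \cite{Sch15} in this nondegenerate setting then controls $\sup|\psi|$ in the horizon-adjacent region $\{r \leq \rc+\epsilon\}$. Combining with Step~1 over the overlapping region and taking the supremum gives \eqref{eq:pointwiseboundeverywhere}.

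\medskip
\noindent I expect the main obstacle to be the behaviour near the horizons in Step~2: verifying that the exponential decay \eqref{eq:expdecayassumption} with $\beta$ above the blueshift threshold $\tfrac12\kappa_{\mc{C}}$ is exactly what is needed for the higher-order nondegenerate energies to remain finite, and that the \cite{Sch15} pointwise estimate indeed applies uniformly up to $\mc{C}^+\cup\ol{\mc{C}}^+$ rather than merely on the interior. Away from the horizons the argument is a routine combination of the uniform energy bound and Sobolev embedding, so the delicate part is confirming that the cited estimate is compatible with the nondegenerate current $E^N$ and that commuting with $\pa_r$ (or the horizon-regular vectorfields) preserves the requisite decay.
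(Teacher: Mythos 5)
Your overall architecture -- uniform energy bounds from Theorems~\ref{thm:scatteringwithouthorizon} and \ref{thm:scatteringhorizon}, upgraded to pointwise control by a Sobolev embedding on the hypersurfaces, with the interior and horizon regions treated separately -- is exactly the paper's strategy. But there is a concrete misstep in how you propose to obtain the higher-order energies. You claim the pointwise bound requires control of $E[\pa_r^k\psi]$ for $k\le 2$, to be obtained because ``the derivatives $\pa_r^k\psi$'' have ``scattering data controlled by correspondingly higher Sobolev norms of $\psi_0,\psi_3$.'' This mechanism does not work: $\pa_r$ is not Killing, so $\pa_r\psi$ does not solve the homogeneous wave equation and is not itself a scattering solution to which Theorem~\ref{thm:scatteringwithouthorizon} applies; moreover its leading behaviour at $\Sigma^+$ is governed by the subleading coefficients $\psi_2,\psi_3$ of the expansion \eqref{eq:asymptoticsintro} (indeed $\pa_r\psi\to 0$), not by extra Sobolev regularity of $\psi_0$. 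Controlling radial-derivative energies is the business of the entirely separate forward machinery of Section~\ref{sec:forwardasymptotics} (the commutators $X_s$, $Y$), which requires the hypothesis \eqref{eq:fafiniteenergy} that you are conflating with the scattering setup. On the horizon the problem is worse still, since $\pa_r$ degenerates there.

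The correct and much simpler route, which is what the paper does (see the Remark following the Corollary, Section~\ref{sec:pointwisescattering}, and Proposition~\ref{SobEst}), is to commute with the Killing vectorfields $T,\Omega_i$ that span the tangent spaces of $\Sigma_r$ and of $\mc{C}^+,\ol{\mc{C}}^+$. The Sobolev embedding of Proposition~\ref{SobEst} needs only these tangential derivatives, namely $\|Z^I\psi\|_{L^2(\Sigma_r)}$ for $|I|\le 2$ with $Z^I=\Omega_1^{I_1}\Omega_2^{I_2}\Omega_3^{I_3}T^{I_4}$. Since each $Z^I$ is built from Killing fields, $Z^I\psi$ solves $\square_g(Z^I\psi)=0$, and by the uniqueness statement of Theorem~\ref{thm:scatteringwithouthorizon} it \emph{is} the scattering solution with data $Z^I\psi_0$, $Z^I\psi_3$; the energy bound \eqref{eq:scatteringenergybound} (and, near the horizons, \eqref{eq:scatteringhorizonbound}, whose hypothesis \eqref{eq:expdecayassumption} is preserved under $Z^I$) then applies verbatim with two extra derivatives on the data. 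This is precisely why the Corollary asks for $\psi_0\in H^6$, $\psi_3\in H^4$ rather than $H^4\times H^2$. With this substitution your two-step decomposition closes; without it, Step~1 already stalls at the point where you try to bound $E[\pa_r^k\psi]$.
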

\begin{remark}
Corollary~\ref{crl:pwbound} is proved by commuting energy estimates with the Killing vectorfields that span the hypersurfaces $\Sigma_r$, $\mc{C}^+$ and $\ol{\mc{C}}^+$. These are the generators of the spherical isometries $\Omega_i:i=1,2,3$, and the coordinate vectorfield $T =\frac{\pa}{\pa t}$. A Sobolev embedding on these hypersurfaces gives pointwise bounds on $\psi$, see~\cite{Sch15} for details.
\end{remark}
\subsection{Forward asymptotics for linear waves on Schwarzschild-de Sitter}
We now turn our attention to the proof of Theorem~\ref{thm:forwardasymptotics}, which we set up and prove in \textbf{Section~\ref{sec:forwardasymptotics}}. Theorem~\ref{thm:forwardasymptotics} relies on several new higher-order energy estimates. These estimates capture the global expansion of $\mc{R}^+$. We have:
\begin{theorem}[Higher-order redshift estimate]\label{thm:forwardenergyintro}
Let $\psi$ be a solution to the wave equation $\square_g \psi = 0$ on the expanding region of Schwarzschild-de Sitter, and let $r_0$ be a radius that is slightly larger than $\rc$. Let $X,Y$ denote the vectorfields
$$X = r\phi^{-2}\pa_r,\quad Y = r\phi^{-1}\pa_r.$$
Then for all $r_2 > r_1 \geq r_\mc{C}$ we have the energy estimate
\begin{multline}
E_{\Sigma_r}^{\pa_r}[Y(X\psi)](r_2) + E_{\Sigma_r}^{\pa_r}[X\psi](r_2) + E_{\Sigma_r}^{\pa_r}[\psi](r_2)\\* 
\lesa_{\Lambda,m,r_0} E_{\Sigma_r}^{\pa_r}[Y(X\psi)](r_1) + E_{\Sigma_r}^{\pa_r}[X\psi](r_1) + E_{\Sigma_r}^{\pa_r}[\psi](r_1).\label{eq:forwardestintro}
\end{multline}
\end{theorem}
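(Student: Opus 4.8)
The plan is to obtain the three-level estimate by commuting $\square_g\psi=0$ with the radial fields $X$ and $Y$ and running a redshift-type energy estimate on each commuted equation, organised as a triangular hierarchy in $\psi$, $X\psi$, $Y(X\psi)$. The structural input is the explicit form of the wave operator adapted to $X,Y$. Using $\sqrt{|g|}=r^2\sin\theta$ together with $g^{rr}=-\phi^{-2}$, $g^{tt}=\phi^2$, one finds
\begin{equation*}
\square_g\psi=-\frac{1}{r^2}\pa_r\big(r^2\phi^{-2}\pa_r\psi\big)+\phi^2\pa_t^2\psi+\frac{1}{r^2}\slashed{\Delta}\psi,
\end{equation*}
where $\slashed{\Delta}$ is the Laplacian of the unit round sphere. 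Since $r^2\phi^{-2}\pa_r\psi=r\,X\psi$ and $r\,\pa_r(X\psi)=\phi\,Y(X\psi)$, the radial part collapses and the weights $r\phi^{-2}$, $r\phi^{-1}$ are tuned precisely so that, on solutions of $\square_g\psi=0$,
\begin{equation*}
X\psi+\phi\,Y(X\psi)=\slashed{\Delta}\psi+r^2\phi^2\,\pa_t^2\psi.
\end{equation*}
This identity is the heart of the matter: the second radial quantity $Y(X\psi)$ is traded for tangential derivatives (the sphere Laplacian and the Killing field $T=\pa_t$) plus the first radial quantity $X\psi$, so commuting with $X$ and $Y$ never costs an uncontrolled radial derivative, and the coefficient $r^2\phi^2\to3/\Lambda$ remains bounded as $r\to\infty$.

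First I would record the base redshift estimate of \cite{Sch15} in inhomogeneous form. For $\square_g u=F$, the redshift multiplier $V$ produces a nonnegative bulk current $K[u]\geq0$ and an energy identity of the schematic shape
\begin{equation*}
E_{\Sigma_r}^{\pa_r}[u](r_2)+\int_{r_1}^{r_2}\!\!\int_{\Sigma_r}K[u]\,\de\mu_{\ol{g}_r}\,\de r=E_{\Sigma_r}^{\pa_r}[u](r_1)+\int_{r_1}^{r_2}\!\!\int_{\Sigma_r}(Vu)\,F\,\de\mu_{\ol{g}_r}\,\de r.
\end{equation*}
For $F=0$ this already gives monotonicity of $E_{\Sigma_r}^{\pa_r}[\psi]$ and, crucially, controls the bulk $\int K[\psi]$ by the initial energy; this bulk serves as a reservoir into which the commutator errors at the next level are absorbed. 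The radius $r_0$ marks the transition between a horizon region $r\in[r_{\mc{C}},r_0]$, where the sign of $K$ is supplied by the redshift, and the far region $r\geq r_0$, where the expansion encoded in the $r$-weights dominates; the estimate is produced by patching these regimes, which is why the constant depends on $r_0$.

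Next I would commute, writing $\square_g(X\psi)=[\square_g,X]\psi$ and $\square_g(Y(X\psi))=[\square_g,Y](X\psi)+Y([\square_g,X]\psi)$, and read the commutators off the coordinate form of $\square_g$. Since $X,Y$ are radial, the commutators produce second tangential derivatives of $\psi$ (terms in $\slashed{\Delta}\psi$ and $\pa_t^2\psi$); using the clean identity these are rewritten as radial hierarchy quantities plus a remainder whose coefficient decays like $r^{-2}$. Integrating by parts in the tangential Killing directions $T=\pa_t$ and the spherical generators (which commute with $X$ and $Y$) moves one tangential derivative onto the multiplier factor, and because $\pa_r(X\psi)=\frac{\phi}{r}\,Y(X\psi)$ this produces tangential derivatives of the top-level quantity $Y(X\psi)$, which are exactly what $E_{\Sigma_r}^{\pa_r}[Y(X\psi)]$ controls. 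Applying the inhomogeneous base estimate at each level, summing, and absorbing the error fluxes into the nonnegative bulk currents and the combined energy $\mc{E}(r)=E_{\Sigma_r}^{\pa_r}[\psi]+E_{\Sigma_r}^{\pa_r}[X\psi]+E_{\Sigma_r}^{\pa_r}[Y(X\psi)]$ then closes the argument by Gronwall in $r$; the tuning of the $r$- and $\phi$-weights makes the error coefficients integrable over $r\geq r_1$, so the Gronwall constant is finite and independent of $r_1,r_2$.

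The main obstacle is this last point of bookkeeping: one must verify that after using the identity and integrating by parts no surviving term requires a derivative transverse to $\Sigma_r$ of order higher than $Y(X\psi)$, and that every borderline term either inherits the favourable redshift sign of $K$ or decays fast enough in $r$ to be integrated against the combined energy. The identity for $X\psi+\phi\,Y(X\psi)$ is exactly what excludes a genuine loss of derivatives; the remaining labour is to track the weights carefully enough that the error integrals are dominated by the base-level bulk and by $\mc{E}$ on the right-hand side.
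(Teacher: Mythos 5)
Your overall architecture (commute with $X$ and then $Y$, run the redshift/expansion energy identity at each level, close with Gr\"onwall on the combined energy) matches the paper's, and your identity $X\psi+\phi\,Y(X\psi)=\Delta_{\sph^2}\psi+r^2\phi^2\pa_t^2\psi$ is correct on solutions and is a genuinely nice reformulation of the wave equation in terms of the hierarchy quantities. However, the error analysis --- which is where all the work in this theorem actually lives --- is not carried out, and where you sketch it, it does not close. The dangerous commutator term at the first level is $2\phi^3(1-\tfrac{3m}{r})\,\pa_t^2\psi\,\pa_r(X\psi)$. If you use your identity to substitute $\pa_t^2\psi=\tfrac{1}{r^2\phi^2}\big(X\psi+\phi Y(X\psi)-\Delta_{\sph^2}\psi\big)$, the $\Delta_{\sph^2}\psi$ and $\phi Y(X\psi)$ pieces behave as you say (the former after integrating by parts on the sphere, the latter having a favourable sign), but you are left with the \emph{undifferentiated} term $\tfrac{2\phi}{r^2}(1-\tfrac{3m}{r})(X\psi)\,\pa_r(X\psi)$. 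The energies $E^{\pa_r}_{\Sigma_r}[\psi]$ and $E^{\pa_r}_{\Sigma_r}[X\psi]$ control only derivatives, and the only available bound $\|X\psi\|_{L^2(\RR\times\sph^2)}\lesssim r\,E^{\pa_r}_{\Sigma_r}[\psi]^{1/2}$ makes this bulk contribution of size $r^{-1}\big(E[\psi]+E[X\psi]\big)$ (and, after integrating the total-$r$-derivative part, leaves a residual of size $r^{-2}\|X\psi\|^2\sim E[\psi]$), which is not integrable in $r$; Gr\"onwall then yields a constant growing polynomially in $r_2$ rather than the uniform bound claimed. Closing this would require either a Hardy/bootstrap argument for $\|X\psi\|_{L^2}$ or additional structure you have not supplied.

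The paper resolves exactly this difficulty differently: it chooses $f=r\phi^{-2}$ precisely so that the commutator $[\square_g,f\pa_r]$ contains \emph{no} undifferentiated $\pa_r\psi$ term at all (Lemma~\ref{lem:commutedwaveeq}), estimates $\pa_t^2\psi$ by Cauchy--Schwarz, and then controls $\int_{\Sigma_r}(\pa_t^2\psi)^2$ via the elliptic estimate of Corollary~\ref{cor:ellipticestSdS} together with $\ol{\Delta}\psi=r^{-2}\pa_r(X_w\psi)$, which forces the introduction of the second commutator $X_w=r^2\phi^{-2}\pa_r$ and the separately proved auxiliary estimate \eqref{eq:hoestimateweak} carrying an $r^{-4}$ normalisation. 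None of this machinery appears in your proposal, and it cannot be bypassed at the top level either: your alternative device of integrating by parts in $t$ (rather than substituting) does handle $\pa_t^2\psi\,\pa_r(X\psi)$ at the first level with an integrable $r^{-2}$ coefficient, but at the second level the analogous term $\pa_t^2(X\psi)\,\pa_r(Y(X\psi))$ would produce the mixed derivative $\pa_t\pa_r(Y(X\psi))$, which is one order beyond what $E^{\pa_r}_{\Sigma_r}[Y(X\psi)]$ controls. So the claim that ``no surviving term requires a derivative transverse to $\Sigma_r$ of order higher than $Y(X\psi)$'' is precisely the point that needs proof and is where your argument, as written, fails.
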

\begin{remark}\label{rmk:normalderiv}
The weights in $r$ for the vectorfield commutators $X,Y$ are, to leading order:
$$X \sim r^3\pa_r,\quad Y \sim r^2\pa_r,$$
and so applying these vectorfields to a solution $\psi$ to the wave equation essentially identifies higher-order terms in the asymptotic expansion \eqref{eq:asymptoticsintro}. To see this, if we assume the asymptotic expansion \eqref{eq:asymptoticsintro} holds, then
$$\psi \sim \psi_0, \quad r^3\pa_r \psi \sim \psi_2,\quad r^2\pa_r(r^3\pa_r\psi) \sim \psi_3.$$
\end{remark}
\begin{remark}In \cite{Sch15}, it was shown that the zeroth-order energy $E[\psi]$ is monotone in $r$, so that if $\square_g \psi = 0$, then
\begin{equation}\label{eq:redshift0}
E[\psi](r_2) \leq E[\psi](r_1)
\end{equation}
for all $r_2 \geq r_1 > \rc$. In that paper a \emph{vectorfield multiplier} is identified that captures the global expansion of $\mc{R}^+$, in the sense that it produces a nonnegative bulk term in the resulting energy identity.
\end{remark}
We derive the higher-order redshift estimate of Theorem \ref{thm:forwardenergyintro} by showing that as \emph{vectorfield commutators}, $X$ and $Y$ satisfy good commutation identities, once again leading to a nonnegative bulk term. These commutators are proportional to $\pa_r$, and so are not Killing. This means that $X$ and $Y$ have commutation identities containing lower-order terms that also need to be controlled. 
\begin{remark}
A similar higher-order global redshift estimate has been proved in \cite{Sch22} for \emph{Weyl fields} on expanding spacetimes. Weyl fields are trace-free solutions to the Bianchi equations
$$\nabla^\mu W_{\mu\nu\lambda\delta} = 0.$$
In that paper, the Bianchi equations are the hyperbolic differential equations for which a higher-order redshift estimate is proved. In the context of the local redshift effect on black hole horizons, these vectorfield commutator estimates stem from Dafermos and Rodnianski in \cite{DR13}. We also mention \cite{FS20}, in which similar ideas are used to proved higher-order energy estimates for linear waves on the black hole region of Schwarzschild spacetimes.
\end{remark}
\begin{remark}
See \cite{CNO19,NR23} for results on forward asymptotics for the wave equation on related cosmological spacetimes. Forward asymptotics have been studied for solutions to Einstein's vacuum equations. In particular, asymptotic expansions for perturbations of Minkoswki are obtained in \cite{Lind17,HV20}.
\end{remark}
Through a density argument presented in Section~\ref{sec:forwardlimits}, Theorem~\ref{thm:forwardenergyintro} implies that the quantities $\psi, X\psi,Y(X\psi)$ all have a nonzero limit on $\Sigma^+$ with respect to the energy norm. A simple application of the fundamental theorem of calculus then implies the asymptotic expansion \eqref{eq:asymptoticsintro}, due to the weights in $r$ possessed by the vectorfield commutators $X,Y$.
\begin{remark}
We expect that one can derive successive higher-order weighted estimates by repeatedly commuting the existing estimate \eqref{eq:forwardestintro} with the commutator $Y$. This would allow use to derive for solutions an asymptotic expansion to arbitrary order in powers of $r^{-1}$. However, the present estimate is sufficient for us to produce an asymptotic expansion that identifies the two pieces of data $\psi_0$ ,$\psi_3$ for the corresponding scattering problem.
\end{remark}

\subsection{Scattering on perturbations of Schwarzschild-de Sitter}
The results we prove in this paper Schwarzschild-de Sitter can be suitably generalised to a broad class of expanding spacetimes. This is because they do not rely on fundamental solution techniques. In Section~\ref{sec:perturbations} we highlight this by extending the scattering result from Theorem~\ref{thm:scatteringwithouthorizon} to a large class of expanding spacetimes. In particular, we show that the construction of scattering solutions extends to Kerr-de Sitter, as well as spacetimes that do not necessarily converge to Schwarzschild-de Sitter. The class of metrics we consider, broadly speaking, are those with metrics that have the same leading-order asymptotic behaviour as Schwarzschild-de Sitter in terms of growth or decay in $r$.\footnote{This class of spacetimes is similar to those considered in \cite{Vas10}, with the difference being that the metric components here can be assumed to be of finite-order regularity rather than smooth.} We do not assume closeness of the metric to Schwarzschild-de Sitter, and we make no symmetry assumption. An explicit description of the metrics we consider can be found in Definition~\ref{def:perturbations}

On this class of spacetimes we have the following scattering result, stated informally:
\begin{theorem}[Scattering of linear waves on perturbations of Schwarzschild-de Sitter]\label{thm:pertscatteringinformal}
An analogue of the scattering result of Theorem~\ref{thm:scatteringwithouthorizon} holds for solutions to the linear wave equation on spacetimes with the same leading-order asymptotic behaviour as Schwarzschild-de Sitter.
\end{theorem}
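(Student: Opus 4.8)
The plan is to make the informal Theorem~\ref{thm:pertscatteringinformal} precise against Definition~\ref{def:perturbations} and then re-run the proof of Theorem~\ref{thm:scatteringwithouthorizon} step by step, checking at each stage that it used only the leading-order asymptotic structure of the metric and never the exact Schwarzschild-de Sitter form. The proof of Theorem~\ref{thm:scatteringwithouthorizon} splits into two independent ingredients: first, the construction in Section~\ref{sec:asympsol} of an asymptotic solution $\psi_{\text{asymp}} = \psi_0 + \psi_2 r^{-2} + \psi_3 r^{-3}$ with $\square_g \psi_{\text{asymp}} = O(r^{-4})$; and second, the backward weighted energy estimate of Proposition~\ref{prop:backwardenergyestimate}, applied to the fast-decaying remainder $\psi_{\text{rem}} = \psi - \psi_{\text{asymp}}$. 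I would treat these two ingredients separately for a perturbed metric $\tg$.

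For the asymptotic solution, I would write the perturbed wave operator $\square_{\tg}$ in the $(r,t,\theta,\varphi)$ coordinates and expand its coefficients in powers of $r^{-1}$. Because the metrics in Definition~\ref{def:perturbations} share the leading-order growth and decay of Schwarzschild-de Sitter, the top-order terms of $\square_{\tg}$ that determine $\psi_2$ from $\psi_0$, and that single out $\psi_0$, $\psi_3$ as freely specifiable data, coincide with those in the exact case; the perturbation enters only through subleading coefficients. Matching the ansatz order by order then reproduces the same determination of $\psi_2$ from $\psi_0$ and leaves an error $\square_{\tg}\psi_{\text{asymp}}$ that still decays like $O(r^{-4})$, with the angular regularity of the data distributed as in Theorem~\ref{thm:scatteringwithouthorizon}. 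This is a finite algebraic computation and, crucially, needs no symmetry: it is purely a statement about the radial weights of the metric components.

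For the energy estimate, the key point is that the multiplier $M = r^2\phi^{-2}\pa_r$ of Proposition~\ref{prop:backwardenergyestimate} is built from the lapse $\phi$, which I would redefine using $\tg$. The resulting bulk integrand is, to leading order in $r$, identical to the exact computation and hence of the favourable nonpositive sign; the perturbation contributes extra bulk terms that carry an additional power of $r^{-1}$. Rather than the exact monotonicity of $r^2 E[\psi]$ available in the unperturbed case, I would close the estimate with a Gr\"onwall argument backward in $r$: the perturbative error coefficients are integrable near $r = \infty$, so that $r^2(E[\psi])^{1/2}$ is controlled up to a finite multiplicative constant by its value at large $r$ plus the integrated source. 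Applying this to $\psi_{\text{rem}}$, whose source is the $O(r^{-4})$ error above, and integrating backward from $\Sigma^+$ yields existence; uniqueness follows because the difference of two solutions with the same data decays faster than $r^{-2}$ in energy, forcing the right-hand side of the backward estimate to vanish as $r \ra \infty$.

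The main obstacle is the energy estimate in the absence of spherical symmetry. In exact Schwarzschild-de Sitter the sign of the bulk term came from an explicit cancellation that used the precise metric coefficients and the spherical decomposition; for a general metric one must show that the principal part of the bulk retains its sign and that every perturbative correction is genuinely subleading in $r$ and integrable, uniformly in the angular variables. This requires organising the error terms so that the dangerous contributions—in particular those coupling radial and angular derivatives, which no longer decouple without symmetry—carry the extra decay needed for the Gr\"onwall argument to close. Establishing this robust, symmetry-free version of the favourable-sign computation, and verifying that the redefined lapse and energy $E[\psi]$ remain coercive for $\tg$, is where the real work lies; the remaining steps then follow the unperturbed argument essentially verbatim.
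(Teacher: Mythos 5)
Your overall strategy (asymptotic solution plus backward weighted estimate applied to the remainder) is the right skeleton, but both of your ingredients have genuine gaps for the class of metrics the paper actually treats, and the paper's proof avoids them by a structurally different move: it conformally compactifies, setting $\rho = 1/r$ and working with $\tg = \rho^2 g$ and the conformal wave equation $\square_{\tg}\psi = \frac{2}{\rho\phi}n\psi$, rather than expanding the physical operator in powers of $r^{-1}$.

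The first gap is in your asymptotic solution. You claim that for any metric with Schwarzschild-de Sitter leading-order behaviour the same ansatz $\psi_0 + \psi_2 r^{-2} + \psi_3 r^{-3}$ closes with error $O(r^{-4})$, the perturbation entering ``only through subleading coefficients.'' This is false for the general class $\mc{G}_\Lambda^1$ of Definition~\ref{def:perturbations}: precisely those subleading coefficients obstruct the order-by-order matching at order $r^{-3}$, and Lemma~\ref{lem:asympperturbed} shows the ansatz must generically include a term $\rho^3\log\rho\,\psi_{3,1}$ (cf.\ Remark~\ref{rmk:logterms}, where this is traced to whether $\phi^{-2}$ has an $O(r)$ term in its expansion). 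The log term vanishes only for the restricted class $\mc{G}_\Lambda^2$ (which contains Kerr-de Sitter). Your pure power-series matching therefore cannot be solved for the full class, and the error you feed into the backward estimate is $O(r^{-4}\log r)$ rather than $O(r^{-4})$; the paper has to check separately that $(\log\rho)^2$ is integrable up to $\rho = 0$ to close Theorem~\ref{thm:pertscatteringformal}.

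The second gap is in your energy estimate. For $\mc{G}_\Lambda^1$ metrics the perturbation of the conformal metric is only $O(\rho)$, so the error terms in the deformation-tensor computation for your physical multiplier $M = r^2\phi^{-2}\pa_r$ are of \emph{relative} size $O(1/r)$ compared to the energy density, and $\int^{\infty} r^{-1}\,\de r$ diverges: a Gr\"onwall argument with a non-integrable coefficient produces polynomial losses in $r$, which destroys exactly the $r$-weights that the backward estimate is supposed to preserve. So ``the perturbative error coefficients are integrable near $r=\infty$'' is not justified and, as stated, fails for the general class (it is plausible only for $\mc{G}_\Lambda^2$). The paper's Proposition~\ref{prop:weightedenergyperturbed} sidesteps this entirely: with the conformal multiplier $M = \rho^{-4}n$ it never proves nonpositivity of the bulk, only the bound $K^M[\psi] \leq C\,J^M[\psi]\cdot n$ with $C$ uniform up to $\{\rho=0\}$ (this uses the $C^3$-extension of $\tg$), and then runs Gr\"onwall over the \emph{finite} interval $\rho \in (0,\rho_0)$, where the factor $e^{C\rho_0}$ is harmless. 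In other words, the compactification converts your hard problem---decay and integrability of error coefficients as $r \ra \infty$ without symmetry---into a soft one: uniform boundedness of metric derivatives at the boundary. Without either that reformulation or a genuinely sharper sign/decay analysis of the bulk in physical coordinates, your argument does not close for the class of spacetimes in the theorem.
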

For a formal statement of this result, see \textbf{Theorem~\ref{thm:pertscatteringformal}} in \textbf{Section~\ref{sec:scatteringperturbed}}. We show that the two components of the proof, which are the construction of asymptotic solutions and a weighted energy estimate suitable for the backward problem, generalise from Schwarzschild-de Sitter to the perturbed setting. 

Depending on the form of the metrics, the asymptotic solutions to the wave equation on a given spacetime we construct are of the form
$$\psi_{\text{asymp}} \sim \psi_0 + \frac{\psi_2}{r^2} + \frac{\log r\>\psi_{3,1}}{r^3} + \frac{\psi_3}{r^3}.$$
Thus a $r^{3}\log r\> \psi_{3,1}$ term can be present in the expansion. We specify sufficient conditions on the metric for which this log term is not present. In particular these conditions are satisfied by the expanding region of the Kerr-de Sitter family of spacetimes. This implies the following:
\begin{corollary}[Scattering on Kerr-de Sitter]
Let $\psi_0,\psi_3$ be sufficiently regular functions at the future boundary of Kerr-de Sitter. Then there exists a unique, finite-energy solution to the wave equation $\square_g \psi = 0$ on the expanding region of Kerr-de Sitter such that for large $r$ we have
$$\psi \sim \psi_0 + \frac{\psi_2}{r^2} + \frac{\psi_3}{r^3},$$
where $\psi_2$ is determined by $\psi_0$.
\end{corollary}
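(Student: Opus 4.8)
The plan is to realise this corollary as a direct application of the general perturbation result Theorem~\ref{thm:pertscatteringformal} to the Kerr-de Sitter metric, the only genuine work being the verification of two hypotheses: first, that the expanding region of Kerr-de Sitter belongs to the admissible class of Definition~\ref{def:perturbations}; and second, that it satisfies the sufficient condition isolated in Section~\ref{sec:scatteringperturbed} guaranteeing the absence of the $r^{-3}\log r$ term. Once both are in place, existence, uniqueness, finiteness of energy, and the stated asymptotic expansion follow immediately from the general theorem.

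First I would write the Kerr-de Sitter metric in Boyer-Lindquist coordinates, with
$$\Delta_r = (r^2+a^2)\Big(1-\LT r^2\Big) - 2mr,\quad \rho^2 = r^2+a^2\cos^2\theta,\quad \Delta_\theta = 1+\LT a^2\cos^2\theta,$$
and pass to the cosmological region $\{\Delta_r<0\}$ lying beyond the largest root of $\Delta_r$, where $\pa_r$ is timelike and $r$ plays the role of the foliation parameter exactly as in $\mc{R}^+$. The point is to expand the contravariant metric components — equivalently the coefficients of $\square_g$ — in powers of $r^{-1}$ and check that they exhibit the same leading-order growth and decay as Schwarzschild-de Sitter: one finds $g^{rr} = \Delta_r/\rho^2 \sim -\LT r^2$, while every remaining component of the inverse metric, including the $t\varphi$ cross term produced by the rotation, decays like $r^{-2}$, now carrying bounded $\theta$-dependent coefficients built from $\Delta_\theta$ and $\rho^2$. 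Since each metric component is a rational, hence smooth, function of $r^{-1}$ on $\{r\geq r_0\}$, the finite-order regularity demanded by Definition~\ref{def:perturbations} is automatic; the substantive check is only that these leading coefficients lie in the admissible range and that the $\theta$-dependence and cross term are compatible with the structural assumptions on the perturbation class, which make no symmetry requirement.

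The heart of the argument, and the main obstacle, is the second hypothesis. When one substitutes the ansatz $\psi_0 + \psi_2 r^{-2} + \psi_{3,1}r^{-3}\log r + \psi_3 r^{-3}$ into $\square_g$ and solves order by order, the logarithm is forced precisely by a resonance in the transport equation for the $r^{-3}$ coefficient, so that $\psi_{3,1}$ is proportional to a fixed combination of the subleading metric-expansion coefficients together with $\psi_0$ and $\psi_2$. I expect the delicate part to be that, unlike the even-power criterion of \cite{Vas10}, the Kerr-de Sitter expansion genuinely contains the odd power $r^{-1}$ inherited from the mass $m$ (the same feature that already obstructs Vasy's criterion for Schwarzschild-de Sitter), so vanishing of $\psi_{3,1}$ cannot be read off from parity alone. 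Instead I would compute this resonant coefficient explicitly, tracking the angular contributions from $\Delta_\theta$, $\rho^2$ and the $t\varphi$ cross term — which make the relevant operator on the cylinder more complicated than the round Laplacian — and verify that the mass- and rotation-dependent pieces cancel so that the combination identified in Section~\ref{sec:scatteringperturbed} vanishes identically. This computation is exactly what certifies that Kerr-de Sitter meets the no-log condition.

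With both hypotheses verified, the conclusion follows: Theorem~\ref{thm:pertscatteringformal} produces, for scattering data $\psi_0,\psi_3$ of the stated regularity, a unique finite-energy solution of $\square_g\psi=0$ on the expanding region of Kerr-de Sitter, and the vanishing of $\psi_{3,1}$ upgrades the general expansion to $\psi\sim\psi_0+\psi_2 r^{-2}+\psi_3 r^{-3}$, with $\psi_2$ determined by $\psi_0$ through the order-$r^{-2}$ transport relation.
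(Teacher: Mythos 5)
Your overall strategy coincides with the paper's: the corollary is obtained by feeding Kerr--de Sitter into the general perturbation machinery of Section~\ref{sec:perturbations}, with the only work being (i) membership in the admissible class of Definition~\ref{def:perturbations} and (ii) the no-log condition. Where you diverge is in how you propose to verify (ii), and here your plan, while workable, is considerably heavier than what the paper does and rests on a slight misreading of where the obstruction lives. The paper's sufficient condition for the vanishing of $\psi_{3,1}$ is not a cancellation in the resonant coefficient of the $r^{-3}$ transport equation; it is the purely metric-level statement that the conformal metric $\tg=\rho^2 g$ lies in $\mc{G}_\Lambda^2$, i.e.\ admits an expansion $-\tfrac{3}{\Lambda}\de\rho^2+h+O(\rho^2)$ with \emph{no} $O(\rho)$ term (Lemma~\ref{lem:asympperturbed}). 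When that holds, every coefficient entering the order-$O(\rho)$ recurrence relation vanishes individually, so there is nothing to cancel. Your concern that ``the Kerr--de Sitter expansion genuinely contains the odd power $r^{-1}$ inherited from the mass'' conflates Vasy's all-even-powers criterion with the paper's much weaker first-order condition: after conformal rescaling the mass term $2m/r$ in $g_{tt}$ contributes $2m\rho^{3}$ to $\tg_{tt}$, and the rotation terms contribute at $O(\rho^2)$, both comfortably inside the $\mc{G}_\Lambda^2$ tolerance. So the ``delicate cancellation between mass- and rotation-dependent pieces'' you anticipate computing does not occur — the relevant coefficient is identically zero term by term. Your explicit computation of the resonant coefficient would of course confirm this and is therefore a valid (if redundant) alternative to the paper's check that Kerr--de Sitter in Boyer--Lindquist coordinates with $\rho=r^{-1}$ satisfies \eqref{eq:conformalmetricgood}; what the paper's route buys is that the verification reduces to reading off the leading corrections of the metric components, with no need to track the angular operator or the $t\varphi$ cross term through the recurrence. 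With membership in $\mc{G}_\Lambda^2$ established, existence, uniqueness and the finite-energy bound come from Theorem~\ref{thm:pertscatteringformal} exactly as you say.
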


\subsection{Acknowledgements}
I would like to express my gratitude to my adviser Volker Schlue for suggesting this problem to me, and for his support and guidance throughout the process of writing this paper. I would also like to thank the University of Melbourne for its financial support.


\section{Geometry of the Schwarzschild-de Sitter family of spacetimes}\label{sec:notation}
In this section we recall some of the relevant properties of Schwarzschild-de Sitter spacetimes, and the energies, norms, and other notation which we use throughout this paper. For a more complete introduction of the geometry of Schwarzschild-de Sitter, see \cite{LR77}, as well as \cite{Sch15}.

\subsection{Schwarzschild-de Sitter spacetime}
The Schwarzschild-de Sitter family of spacetimes, discovered independently in \cite{Kot18} and \cite{Wey19}, constitute a one-parameter family of solutions to
Einsteins vacuum equations with positive cosmological constant
\begin{equation}\label{eq:EVE}
\text{Ric}(g) - \Lambda g = 0, \quad \Lambda > 0.
\end{equation}
These spacetimes are parametrised by the black hole mass, denoted by $m$.
We assume that $0 <3m < 1/\sqrt{\Lambda}$, which restricts the class of spacetimes we analyse to \textit{subextremal} Schwarzschild-de Sitter.

Recall the metric
\begin{equation}\label{eq:metricstandardcoords}
g_{\Lambda,m} = -\frac{1}{\LT r^2 - 1 + \frac{2m}{r}}\de r^2 + \Big(\LT r^2 -1 +\frac{2m}{r}\Big) \de t^2 + r^2\>\de \theta^2 + r^2\sin^2\theta \>\de \varphi^2,
\end{equation}
defined on the expanding region. From this, we see that the coordinate vector field $\frac{\pa}{\pa r}$ is timelike on $\mc{R}^+$, and we can view $r$ as a \textit{time function} for the expanding region. Recall also the lapse of the $\Sigma_r$-foliation
$$\phi = \frac{1}{\sqrt{\LT r^2 - 1 + \frac{2m}{r}}}.$$
As $\phi$ frequently appears in weighted energies in this paper, we emphasise that for large $r$, $\phi \sim 1/r$. One can decompose the metric \eqref{eq:metricstandardcoords} so that
\begin{equation}\label{eq:metricstandarddecompose}
g = -\phi^2\de r^2 + \ol{g}_r,
\end{equation}
where $\ol{g}_r$ is the induced metric on the hypersurfaces $\Sigma_r$.

We also recall from \cite{Sch15} the \emph{Kruskal coordinates} $(u,v)$ on Schwarzschild-de Sitter
The coordinates $(r,t,\theta,\varphi)$ cover only the expanding region $\mc{R}^+$, and the metric \eqref{eq:metricstandardcoords} becomes singular at $r = \rc$. In contrast the Kruskal coordinates $(u,v,\theta,\varphi)$ cover the cosmological horizons $\mc{C}^+,\ol{\mc{C}}^+$, and are related to $(r,t,\theta,\varphi)$ coordinates implicitly by
\begin{align*}
uv = \frac{r - \rc}{(r-\rh)^{\alpha_{\mc{H}}}(r+|\roc|)^{\ol{\alpha_{\mc{C}}}}},\\*
\log\Big|\frac{u}{v}\Big| = -\LT\frac{(\rc-\rh)(\rc+|\roc|)}{\rc}t,
\end{align*}
where 
$$\alpha_{\mc{H}} = \frac{\rh}{\rc}\frac{\rc+\roc}{\rh+\roc},\quad \ol{\alpha_{\mc{C}}} = \frac{\roc}{\rc}\frac{\rc-\rh}{\rc+\roc}$$
are positive constants such that $\alpha_{\mc{H}}+\ol{\alpha_{\mc{C}}} = 1$.
We will also write
\begin{equation}\label{eq:surfacegrav}
\kappa_{\mc{C}}\doteq\frac{1}{2}\LT\frac{(\rc-\rh)(\rc+|\roc|)}{\rc},
\end{equation}
and mention briefly that $\kappa_{\mc{C}}$ is the so-called surface gravity of the cosmological horizons. With respect to these coordinates, the metric is
\begin{equation}\label{eq:metrickruskalcoords}
g = -\Omega^2 \de u \de v + r^2 \gamma_{\sph^2},
\end{equation}
where $\Omega^2$ is the function
\begin{equation}\label{eq:omegasquareddef}
\Omega^2 = \LT \frac{1}{\kappa_\mc{C}^2 r}(r-\rh)^{1+\alpha_{\mc{H}}}(r+|\roc|)^{1+\ol{\alpha_{\mc{C}}}}.
\end{equation}
We emphasise that $\Omega^2$ does not vanish on $\mc{C}^+$. The Kruskal coordinates $(u,v) \in \RR^2$ cover the region $\rh < r(u,v) < \infty$, and the expanding region $\mc{R}^+$ is equivalent to the set $\{(u,v): 0 < uv < 1\}$. Moreover, for the cosmological horizons $\mc{C}^+= \{(u,v): v = 0, u \geq 0\}$, $\ol{\mc{C}}^+ = \{(u,v): u=0,v\geq 0\}$. Finally, the future boundary $\Sigma^+$ corresponds with the level set $\{(u,v): uv = 1, u,v > 0\}$.

We recall the Killing vector field $T$, which in Kruskal coordinates is given by
\begin{equation}\label{eq:killingvectorfielddef}
T = \kappa_{\mc{C}}\Big(u\frac{\pa}{\pa u} - v \frac{\pa}{\pa v}\Big).
\end{equation}
On $\mc{R}^+$, $T$ is precisely the coordinate vectorfield $\frac{\pa}{\pa t}$, and is spacelike. On the cosmological horizons $T$ is null. We also define the vectorfield
\begin{equation}\label{eq:yvectorfielddef}
Y|_{\mc{C}^+} = \frac{1}{\iota_{\mc{C}}}\frac{1}{u}\frac{\pa}{\pa v},
\end{equation}
where 
$$\iota_{\mc{C}} = \frac{4}{\kappa_{\mc{C}}}\frac{1}{\Omega^2}\Big|_{\mc{C}^+}.$$
One can see that $Y$ is conjugate to $T$ on the horizon, specifically we have
$$g(T,Y)|_{\mc{C}^+} = -\frac{1}{2}\kappa_{\mc{C}}\iota_{\mc{C}}\Omega^2|_{\mc{C}^+} = -2.$$
\subsection{Wave equations and the energy method}\label{sec:vectorfieldmethod}
In this paper we apply the classical vectorfield method to prove estimates on finite-energy solutions to linear wave equations, see \cite{DR13} for an introduction to these methods in a general relativity context. Recall the standard energy-momentum tensor $T$ for the linear wave equation, given by
$$T_{\mu\nu}[\psi] = \pa_\mu \psi \pa_\nu \psi - \frac{1}{2} g_{\mu\nu} (\pa^\alpha \psi \pa_\alpha \psi),$$
as well as the energy current $J^X$ with respect to a given vectorfield $X$, a $1$-form defined by
$$J^X[\psi] \cdot Y = T[\psi](X,Y).$$
Given timelike vector fields $X,Y$, $J^{X}[\psi]\cdot Y$ is a positive-definite quadratic form of $\pa \psi$ on $\mc{R}^+$, in the sense that the following pointwise inequality holds:
$$J^{X}[\psi]\cdot Y \geqa \sum_{|\alpha|=1}|\pa^\alpha \psi|^2.$$ We will use the following notation for energy fluxes and norms induced by these energy currents. 
\begin{definition}
Given a causal vectorfield $X$, and spacelike or null hypersurface $\Sigma$, Let $E_\Sigma^X[\psi]$ denote the energy flux through $\Sigma$ with respect to $X$, so that
\begin{equation}\label{eq:energynotation}
E_\Sigma^X[\psi] \doteq \int_{\Sigma} \>^*J^X[\psi],
\end{equation}
where $\>^*J^X[\psi]$ denotes the Hodge dual of the corresponding vectorfield $(J^X)^\sharp$. 
We also define the corresponding induced energy norm
\begin{equation}\label{eq:energynormnotation}\|\psi\|_{X,\Sigma} \doteq \Big(E_\Sigma^X[\psi]\Big)^{1/2}.
\end{equation}
\end{definition}
If $\Sigma$ is a spacelike hypersurface, we may write 
$$\>^*J^X[\psi]|_\Sigma = J^X[\psi]\cdot n_{\Sigma} \>\de \mu_{\ol{g}_\Sigma},$$
where $n_{\Sigma}$ is the unit normal of $\Sigma$, and $\ol{g}_\Sigma$ is the induced metric on $\Sigma$. 
\begin{remark}
We point out that the energy $E[\psi]$ defined in \eqref{eq:naturalenergy} is equivalent to the energy flux $E_{\Sigma_r}^{\pa_r}$:
\begin{equation*}
E[\psi](r) = E_{\Sigma_r}^{\pa r}[\psi] = \int_{\Sigma_r} J^{\pa_r}[\psi] \cdot n_{\Sigma_r} \de \mu_{\ol{g}_r}
\end{equation*}
To understand the weights in $r$ associated with the energy $E[\psi]$, we note that 
\begin{gather*}
 n_{\Sigma_r} = \phi^{-1}\frac{\pa}{\pa r} \sim r\frac{\pa}{\pa r}, \quad \de \mu_{\ol{g}_r} = r^2\phi^{-1}\>\de t \wedge \de\mu_{\sph^2} \sim r^3\>\de t \wedge \de \mu_{\sph^2}.
\end{gather*}
Thus for large $r$ we have
\begin{align*}
E[\psi]  &= \frac{1}{2}\int_{\RR \times \sph^2} r^2\phi^{-2}(\pa_r\psi)^2 + r^2\phi^2 (\pa_t\psi)^2 + r^2|\slashed\nabla \psi|^2 \de t \de \mu_{\mr{\gamma}}\\*
&\sim r^4\|\pa_r\psi(r)\|_{L^2(\RR\times\sph^2)}^2 + \|\tilde{\nabla}\psi(r)\|_{L^2(\RR\times\sph^2)}^2.
\end{align*}
where  $\slashed\nabla\psi$ is the standard covariant derivative on a sphere of radius $r$, $\tilde{\nabla}\psi$ is the standard covariant derivative on the cylinder $\RR\times\sph^2$, and $\de \mu_{\mr\gamma}$ is the standard volume form on $\sph^2$.

\end{remark}
We will also define the following $L^2$-norm on the $\Sigma_r$-hypersurfaces. Let
$$\|\psi\|_{L^2(\Sigma_r)}^2 = \int_{\Sigma_r}\psi^2 \de \mu_{\ol{g}_r} \sim r^3\|\psi(r)\|_{L^2(\RR\times\sph^2)}^2.$$

In this paper, we adopt the following notation conventions to distinguish differential operators on relevant manifolds or submanifolds. We write $\slashed\nabla$ to denote the covariant derivative restricted to a sphere of radius $r$, $\ol\nabla$ for the covariant derivative restricted to the level sets $\Sigma_r$, and $\tilde\nabla$ for the covariant derivative with respect to the conformal metric at $\Sigma^+$:
$$\tg = \lim_{r \ra \infty}\Big(\frac{1}{r^2} \ol{g}_r\Big) = \LT dt^2 + \gamma_{\sph^2}.$$
We will also use this convention for other differential operators which appear in this paper, such as the Laplacian.

\section{Scattering of linear waves on Schwarzschild-de Sitter}\label{sec:scattering}
In this section we will prove existence and uniqueness of scattering solutions to the linear wave equation on the expanding region. The main results of this section are \textbf{Theorem~\ref{thm:scatteringwithouthorizon}} which we prove in Section~\ref{sec:scatteringinterior}, and \textbf{Theorem~\ref{thm:scatteringhorizon}} which we prove in Section~\ref{sec:scatteringhorizon}.

First we will prove the existence and uniqueness of scattering solutions on the interior of the expanding region, which corresponds to Theorem~\ref{thm:scatteringwithouthorizon}. Specifically, we will show that given scattering data $\psi_0$, $\psi_3$, there exists a unique solution to $\square_g \psi = 0$ that is uniformly bounded up to a $\Sigma_{r_0}$ hypersurface for some $r_0 > \rc$. We will consider an asymptotic solution of the form
$$\psi_{\text{asymp}} = \psi_0(t,\omega) + \frac{\psi_2(t,\omega)}{r^2} + \frac{\psi_3(t,\omega)}{r^3},$$
where $\psi_0,\psi_3$ are freely chosen, and $\psi_2$ is determined entirely by $\psi_0$. We will show that that $\psi_{\text{asymp}}$ approximates a true solution, in the sense that
$$|\square_g \psi_{\text{asymp}}|  = O\big(\frac{1}{r^4}\big).$$

\begin{figure}[t]
\centering
\begin{tikzpicture}[scale=1.5]
\filldraw[fill=gray!30](0,3) .. controls (2.75,0.5) and (3.25,0.5) .. (6,3) .. controls (3.5,1.75) and (2.5,1.75)  .. cycle;
\draw (0,3) -- (3,0);
\draw (3,0) -- (6,3);
\draw[dashed] (0,3) .. controls (2.5,2.5) and (3.5,2.5) .. (6,3);
\filldraw  (3,0) circle (0.035cm);

\draw (3,1.1) node[below]{$\Sigma_{r_0}$};
\draw (0,3) .. controls (2.5,1.75) and (3.5,1.75) .. (6,3);
\draw[->] (3,2.125) -- (3,2.55);
\draw[->] (2.2,2.25) -- (2.2,2.6);
\draw[->] (3.75,2.25) -- (3.75,2.6);
\draw (2.5,1.9) node{$\Sigma_R$};
\draw (3,2.8) node{$\Sigma^+$};
\filldraw[fill=white] (0,3) circle (0.035cm);
\filldraw[fill=white] (6,3) circle (0.035cm);
\end{tikzpicture}
\caption{Spacetime region on which we solve the wave equation on the interior of the expanding region. We solve a sequence of finite problems, i.e. we consider a solution $\psi_R$ with prescribed data on $\Sigma_R$, and then show that the limit $\lim_{R \ra\infty}\psi_R$ exists in a suitable energy space. In this step we solve up to a $\Sigma_{r_0}$ hypersurface, where $r_0 > \rc$.}\label{fig:scatteringexpanding}
\end{figure}
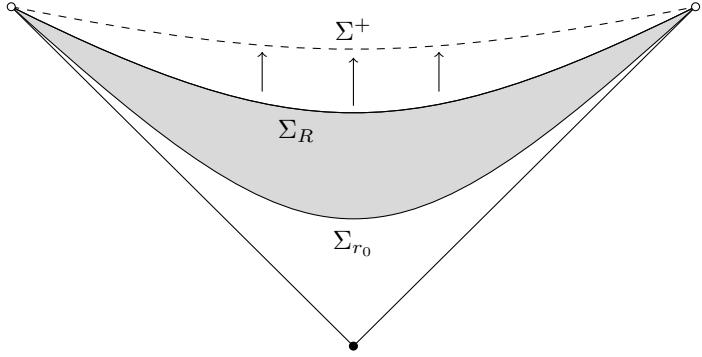

We will prove existence of scattering solutions with scattering data $\psi_0,\psi_3$ on the interior of the expanding region $\mc{R}^+$. We do this by taking a sequence of solutions of finite problems, and showing that the limiting function exists and solves the scattering problem. Specifically, we take sequence of solutions $\psi_R$ to \eqref{eq:waveequation} with prescribed data on the hypersurface $\Sigma_R$; see Figure~\ref{fig:scatteringexpanding}. We set $\psi_R = \psi_{\text{asymp}} + \psi_{\text{rem}}^{R}$, where $\psi_{\text{rem}}^{R}$ satisfies
$$\square_g \psi_{\text{rem}}^{R} = -\square_g \psi_{\text{asymp}},$$
and has trivial data on $\Sigma_R$, so that
$$\psi_{\text{rem}}^{R}|_{\Sigma_R} = 0,\quad n_{\Sigma_R}\cdot \psi_{\text{rem}}^{R}|_{\Sigma_R}=0.$$
The key technical tool we use to control the sequence of remainders $\psi_{\text{rem}}^{R}$ is the weighted energy estimate \textbf{Proposition~\ref{prop:backwardenergyestimate}}, which we prove in Section~\ref{sec:backwardestimate}. Using this weighted energy estimate, we show that the limit $\psi = \lim_{R \ra \infty}\psi_R$ exists with respect to the energy norms \eqref{eq:naturalenergy}, and $\psi$ remains uniformly bounded on $\Sigma_{r_0}$, proving Theorem~\ref{thm:scatteringwithouthorizon}.

We then extend the solutions constructed in Theorem~\ref{thm:scatteringwithouthorizon} to the cosmological horizons by solving the wave equation backwards-in-time from $\Sigma_{r_0}$ to the $\mc{C}^+\cup\ol{\mc{C}}^+$, proving Theorem~\ref{thm:scatteringhorizon}. For this we will assume exponential decay of the scattering data $\psi_0,\psi_3$ along $\Sigma^+$ toward $\iota,\ol{\iota}$. We will consider a sequence of solutions $\psi^{T}$ to the wave equation $\square_g \psi =0$. This time $\psi_T$ has data that is prescribed on a truncation of the level set $\Sigma_{r_0}$, and on null hypersurfaces $C_T^c$ ,$\ol{C}_T^c$ that are transversal to each cosmological horizon; see Figure \ref{fig:scatteringhorizon}. The prescribed data on $\Sigma_{r_0}$ matches the solution constructed in Section~\ref{sec:scatteringinterior}, while the prescribed data on $C_T^c$, $\ol{C}_T^c$ is trivial. We will show that the limit $\lim_{T \ra \infty}\psi^T = \psi$
exists with respect to nondegenerate energy norms, and that $\psi$ has finite nondegenerate energy on $\mc{C}^+ \cup \ol{\mc{C}}^+$.
\begin{figure}[b]
\centering
\begin{tikzpicture}[scale=1.5]
\filldraw[fill=gray!30](4.325,1.325) -- (3,0) -- (1.675,1.325) -- (2,1.65).. controls (2.75,1.25) and (3.25,1.25) .. (4,1.65) -- cycle;
\draw (0,3) -- (1.675,1.325);
\draw (4.325,1.325) -- (6,3);
\draw[dashed] (0,3) .. controls (2.5,2.5) and (3.5,2.5) .. (6,3);

\draw (3,1.8) node[below]{$\Sigma_{r_0}^T$};
\draw (2,1.65) .. controls (1.4,2) .. (0,3);
\draw (4,1.65) .. controls (4.6,2) .. (6,3);
\filldraw  (2,1.65) circle (0.035cm);
\filldraw  (1.675,1.325) circle (0.035cm);
\filldraw  (4,1.65) circle (0.035cm);
\filldraw  (4.325,1.325) circle (0.035cm);
\filldraw  (3,0) circle (0.035cm);
\draw (2,1.55) node[below]{$C_T^c$};
\draw (3.95,1.55) node[below]{$\ol{C_T^c}$};
\draw[->] (1.75,1.55) -- (0.95,2.2);
\draw[->] (4.25,1.55) -- (5.05,2.2);
\draw (3,2.8) node{$\Sigma^+$};
\draw (1.95,0.9) node[below]{$\mc{C}^+$};
\draw (4,0.9) node[below]{$\ol{\mc{C}}^+$};
\filldraw[fill=white] (0,3) circle (0.035cm);
\filldraw[fill=white] (6,3) circle (0.035cm);
\draw (0,3) node[above]{$\iota^+$};
\draw (6,3) node[above]{$\ol{\iota}^+$};
\end{tikzpicture}
\caption{Spacetime region on which we solve the wave equation up to the cosmological horizons. We solve a sequence of finite problems, i.e. we consider a solution $\psi_T$ with prescribed data on $\Sigma_{r_0}^T\cup C_T^c \cup \ol{C}_T^c$, and then show that the limit $\lim_{T \ra\infty}\psi_T$ exists in a suitable energy space.}\label{fig:scatteringhorizon}
\end{figure}
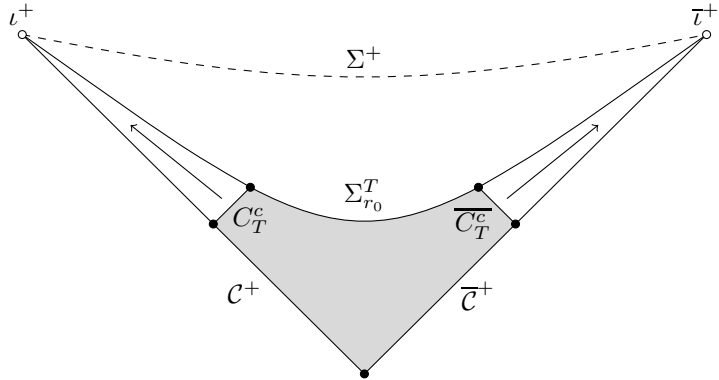


\subsection{The construction of asymptotic solutions}\label{sec:asympsol}
In this section we construct asymptotic solutions that satisfy the wave equation $\square_g \psi_{\text{asymp}} = 0$ up to some small error that decays toward $\Sigma^+$. As previously stated, we will eventually set
$$\psi_{\text{asymp}} = \psi_0 + \frac{\psi_2}{r^2} +\frac{\psi_3}{r^3},$$
where $\psi_0$ and $\psi_3$ are freely chosen, and $\psi_2$ is determined by $\psi_0$. We motivate this choice by first considering a more general finite-order power series that includes logarithmic terms, so that
\begin{equation}\label{eq:asympsolhighorder}
\psi_{\text{asymp}}^{(N)}(r,t,\omega) = \sum_{n=0}^N \frac{\psi_n(t,\omega)}{r^n} + \sum_{n=1}^N \frac{\log r \> \psi_{n,1}(t,\omega)}{r^n},
\end{equation}
for some positive integer $N$. Here the $\psi_n$, $\psi_{n,1}$ are functions on the future boundary $\Sigma^+$. Since $\Sigma^+$ is diffeomorphic to $\RR\times\sph^2$, the $\psi_n$ can also be thought of as functions on the standard cylinder. 
\begin{remark}
We emphasise that we will not be constructing solutions via an infinite power series expansion.
The notion of constructing solutions to the linear wave equation on Minkowski spacetime as a power series in $1/r$ was first explored in detail by Friedlander, see \cite{Fr1}. Due to the different geometry of Minkowski spacetime, the particular asymptotic expansion is manifestly different. In particular, solutions to $\square \psi = 0$ behave to leading order near $\mc{I}^+$ like
$$\psi(t,x) \sim \frac{F(r-t,\omega)}{r},\quad r = |x|,\quad\omega=\frac{x}{|x|},$$
where $F(r-t,\omega)$ is the so-called \emph{radiation field}, whose existence was established by Friedlander in the aforementioned papers. See also \cite{FS20} for an example of asymtotic expansions of linear waves near a black hole singularity.
\end{remark}

The wave equation on the expanding region of Schwarzschild-de Sitter spacetime is expressed as
\begin{equation}\label{eq:waveeqcoords}
\square_g \psi = \phi^2 \pa_t^2 \psi - \frac{1}{r^2}\pa_r\Big(\frac{r^2}{\phi^2}\pa_r\psi\Big)+\frac{1}{r^2}\Delta_{\sph^2} \psi.
\end{equation}

We begin with a preliminary computation that we will use to compute $\square_g \psi_{\text{asymp}}^{(N)}$.
\begin{lemma}\label{lem:waveeqmonomial}
Fix $r_0 > \rc$. Let $n$ be a nonnegative integer, and let $\psi_n \in C^2(\RR\times \sph^2)$. Then for all $r \geq r_0$, we have the following two identities:
\begin{multline}\label{eq:waveeqmonomial}
\square_g \Big(\frac{\psi_n(t,\omega)}{r^n}\Big) = -\LT n(n-3)\frac{1}{r^n}\psi_n + \frac{1}{r^{n+2}}\Big(n(n-1)\psi_n + \tilde{\Delta}\psi_n\Big)\\*
- n^2\frac{2m}{r^{n+3}}\psi_n + O\big(\frac{1}{r^{n+4}})(\pa_t^2\psi_0).
\end{multline}
\begin{equation}\label{eq:waveeqmonomiallog}
\square_g \Big(\frac{\log r \>\psi_{n,1}}{r^n}\Big) = -\LT n(n-3)\frac{\log r}{r^n}\psi_{n,1} + \LT (2n-3)\frac{1}{r^n}\psi_{n,1} + O\big(\frac{\log r}{r^{n+2}})\big.
\end{equation}
\end{lemma}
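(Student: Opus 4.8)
The plan is to compute $\square_g$ directly from its coordinate expression \eqref{eq:waveeqcoords}, splitting the operator into its temporal part $\phi^2\pa_t^2$, its radial part $-\frac{1}{r^2}\pa_r(\frac{r^2}{\phi^2}\pa_r(\cdot))$, and its spherical part $\frac{1}{r^2}\Delta_{\sph^2}$, and then expanding each contribution in powers of $r^{-1}$. Writing $V = \phi^{-2} = \LT r^2 - 1 + \frac{2m}{r}$ for brevity, the structural observation that organises the whole computation is that, applied to a function of the form $\psi_n(t,\omega)r^{-n}$ (which is independent of $t$ and $\omega$ in its radial profile), the radial part is an \emph{exact} finite Laurent polynomial in $r$, so it produces no error; the only source of a genuine remainder is the temporal part, through the expansion of $\frac{1}{V}$.

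For the first identity \eqref{eq:waveeqmonomial} I would begin with the radial part. Since $\pa_r(\psi_n r^{-n}) = -n\psi_n r^{-n-1}$, we get $\frac{r^2}{\phi^2}\pa_r(\psi_n r^{-n}) = -n\psi_n V r^{1-n}$, and differentiating once more with $V' = 2\LT r - 2m r^{-2}$ and multiplying by $-r^{-2}$ yields \emph{exactly} the three terms $-\LT n(n-3)r^{-n}\psi_n$, $\; n(n-1)r^{-n-2}\psi_n$, and $-2mn^2 r^{-n-3}\psi_n$, where the first coefficient uses $n(3-n) = -n(n-3)$. The spherical part contributes $r^{-n-2}\Delta_{\sph^2}\psi_n$. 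For the temporal part I would expand $\frac{1}{V} = \frac{1}{\LT r^2}\big(1 + O(r^{-2})\big) = \frac{1}{\LT r^2} + O(r^{-4})$, so that $\frac{1}{V}r^{-n}\pa_t^2\psi_n = \frac{1}{\LT}r^{-n-2}\pa_t^2\psi_n + O(r^{-n-4})\pa_t^2\psi_n$. Combining the leading temporal term with the spherical term and using that the product structure $\tg = \LT \de t^2 + \gamma_{\sph^2}$ gives $\tilde{\Delta} = \frac{1}{\LT}\pa_t^2 + \Delta_{\sph^2}$, the two fuse into $r^{-n-2}\tilde{\Delta}\psi_n$, which together with the radial $n(n-1)r^{-n-2}\psi_n$ term produces the bracket $n(n-1)\psi_n + \tilde{\Delta}\psi_n$ in \eqref{eq:waveeqmonomial}, with the stated $O(r^{-n-4})$ remainder.

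The second identity \eqref{eq:waveeqmonomiallog} is proved the same way and is the more delicate of the two, so I expect it to be the main obstacle. Here the temporal and spherical parts are immediately $O(\log r\,r^{-n-2})$ because $\frac{1}{V} = O(r^{-2})$, so all the content lies in the radial part. The new feature is that $\pa_r(\log r\,r^{-n}) = r^{-n-1}(1 - n\log r)$ carries both a logarithmic and a bare power, so after two radial derivatives the leading $\LT r^2$ piece of $V$ generates \emph{both} a $\log r\,r^{-n}$ term and a non-logarithmic $r^{-n}$ term. The careful bookkeeping is in tracking the interaction between the exponent $(1-n)$ coming from differentiating $r^{1-n}$ and the derivative of $\log r$: collecting the coefficients of $\LT r^{2-n}$ gives $\LT(3-2n)$ for the non-logarithmic part and $-\LT n(3-n)$ for the logarithmic part, which after the factor $-r^{-2}$ become precisely $\LT(2n-3)r^{-n}\psi_{n,1}$ and $-\LT n(n-3)\log r\,r^{-n}\psi_{n,1}$. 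Every remaining contribution either involves the subleading $-1$ and $\frac{2m}{r}$ pieces of $V$ or is otherwise suppressed by two extra powers of $r$, and hence is absorbed into the $O(\log r\,r^{-n-2})$ error.
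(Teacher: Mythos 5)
Your proposal is correct and follows essentially the same route as the paper: both compute $\square_g(\psi_n r^{-n})$ directly from the coordinate expression \eqref{eq:waveeqcoords}, evaluate the radial part exactly as a finite Laurent polynomial, expand $\phi^2 = \frac{3}{\Lambda}r^{-2}+O(r^{-4})$ to fuse the temporal and spherical contributions into $r^{-n-2}\tilde{\Delta}\psi_n$, and treat \eqref{eq:waveeqmonomiallog} by the analogous computation (which the paper only sketches, whereas you carry out the logarithmic bookkeeping explicitly and arrive at the correct coefficients).
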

\begin{proof}
By \eqref{eq:waveeqcoords}, we have
\begin{equation}\label{eq:waveeqmonomial2}
\square_g \Big(\frac{\psi_n}{r^n}\Big) = \frac{\phi^2}{r^n}\pa_t^2 \psi_n - \frac{1}{r^2}\pa_r\Big(\frac{r^2}{\phi^2}\pa_r\Big(\frac{1}{r^n}\Big)\Big)\psi_n + \frac{1}{r^{n+2}}\Delta_{\sph^2}\psi_n.
\end{equation}
The leading-order behaviour of $\phi^2$ is
\begin{equation}\label{eq:lapseexpansion}\phi^2 = \frac{1}{\LT r^2 - (1-\frac{2m}{r})} = \frac{3}{\Lambda} \frac{1}{r^2} + \frac{1-\frac{2m}{r}}{\LT r^2\Big(\LT r^2 - 1 + \frac{2m}{r}\Big)}= \frac{3}{\Lambda}\frac{1}{r^2} + O\big(\frac{1}{r^4}\big),
\end{equation}
and so
$$\frac{\phi^2}{r^n}\pa_t^2\psi_n + \frac{1}{r^{n+2}}\Delta_{\sph^2}\psi_n = \frac{1}{r^{n+2}}\tilde{\Delta}\psi_n + O\big(\frac{1}{r^{n+4}}\big).$$
Meanwhile, we compute for the radial derivatives:
\begin{align*}
-\frac{1}{r^2}\pa_r\Big(\frac{r^2}{\phi^2}\pa_r\Big(\frac{1}{r^n}\Big)\Big)\psi_n &= \frac{n}{r^2}\pa_r\Big(\frac{1}{\phi^2r^{n-1}}\Big)\psi_n\\*
&= \frac{n}{r^2}\pa_r\Big(\LT \frac{1}{r^{n-3}} - \frac{1}{r^{n-1}} + \frac{2m}{r^n}\Big)\psi_n\\*
&= -\LT\frac{n(n-3)}{r^n}\psi_n +\frac{n(n-1)}{r^{n+2}}\psi_n - 2m\frac{n^2}{r^{n+3}}\psi_n.
\end{align*}
Combining this with \eqref{eq:waveeqmonomial2} yields \eqref{eq:waveeqmonomial}. A similar computation implies \eqref{eq:waveeqmonomiallog}.
\end{proof}
\begin{remark}\label{rmk:waveeqmonomial}
We see from Lemma \ref{lem:waveeqmonomial} that for all but finitely many $n$, we have
\begin{gather*}
\square_g (r^{-n}\psi_n) = O(r^{-n}),\quad\square_g(r^{-n}\log r \>\psi_{n,1}) = O(r^{-n}\log r).
\end{gather*}
The three exceptions are $\psi_0,\psi_3,\psi_{3,1}$, where the leading order term vanishes, giving
\begin{gather*}
\square_g \psi_0 = O(r^{-2}),\quad \square_g (r^{-3}\psi_3) = O(r^{-5}),\quad \square_g (r^{-3}\log r\>\psi_{3,1}) = O(r^{-3}). 
\end{gather*}
\end{remark}
In the next Lemma we consider a general asymptotic solution $\psi_{\text{asymp}}^{(N)}$ of the form \eqref{eq:asympsolhighorder}. We will show that $\psi_{\text{asymp}}^{(N)}$ satisfies the linear wave equation up to a particular order in $r$ if and only if the terms $\psi_{n}$,$\psi_{n,1}$ satisfy recurrence relations. Moreover, we will show that $\psi_0,\psi_3$ capture the correct notion of scattering data for the asymptotic solution.
\begin{lemma}\label{lem:recurrencerelations}
Let $\psi_0,\psi_{1,1},\psi_1,\dots,\psi_{N,1},\psi_N$ be sufficiently regular functions on $\Sigma^+$, and let $\psi_{\text{asymp}}^{(N)}$ be of the form \eqref{eq:asympsolhighorder}. Then
\begin{equation}\label{eq:asympdecaygeneral}
\square_g \psi_{\text{asymp}} = O\big(\frac{\log r}{r^{N+1}}\big)
\end{equation}
if and only if $\psi_0,\psi_1,\psi_{1,1},\dots,\psi_N,\psi_{N,1}$ satisfy $2N - 1$ recurrence relations. These recurrence relations have two degrees of freedom, with $\psi_0,\psi_3$ being the leading-order terms that can be freely chosen. The recurrence relations for the logarithmic terms reduce to
\begin{equation}\label{eq:logrecurrence}
\psi_{n,1} = 0,\quad n = 1,\dots,N,
\end{equation}
while the recurrence relations for the other terms reduce to
\begin{gather*}
\psi_{1} = 0, \quad \psi_{2} = -\frac{3}{\Lambda}\tilde{\Delta}\psi_0,\quad \psi_{n} = R_n(\psi_0,\psi_3),\quad n \geq 4,
\end{gather*}
where the $R_n$ are linear functions of $\psi_0,\psi_3$ and their derivatives.
\end{lemma}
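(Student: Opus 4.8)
The plan is to substitute the ansatz \eqref{eq:asympsolhighorder} into $\square_g$ and apply Lemma~\ref{lem:waveeqmonomial} to each monomial $r^{-n}\psi_n$ and each logarithmic monomial $r^{-n}\log r\,\psi_{n,1}$ separately, then collect the result by homogeneity. Writing
\begin{equation*}
\square_g \psi_{\text{asymp}}^{(N)} = \sum_{k \g 0}\frac{A_k}{r^k} + \sum_{k\g 1}\frac{\log r\,B_k}{r^k},
\end{equation*}
each coefficient $A_k$, $B_k$ is a linear expression in the unknowns $\psi_n$, $\psi_{n,1}$ and their derivatives. Since the functions $\{r^{-k},\log r\,r^{-k}\}$ are asymptotically independent as $r\ra\infty$, the decay statement \eqref{eq:asympdecaygeneral} is \emph{equivalent} to the vanishing of $A_k$ for $0\les k\les N$ and of $B_k$ for $1\les k\les N$. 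This makes the ``if and only if'' automatic: the recurrence relations to be identified are exactly these vanishing conditions, and the remaining work is to solve and parametrise the resulting linear system.

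Next I would set up the bookkeeping of which term contributes to which coefficient. By Lemma~\ref{lem:waveeqmonomial}, the monomial $r^{-n}\psi_n$ feeds its leading contribution into $A_n$ with coefficient $-\LT n(n-3)$, and subleading contributions into $A_{n+2}$ and $A_{n+3}$ (through $n(n-1)\psi_n+\tilde{\Delta}\psi_n$ and $-2mn^2\psi_n$), with still lower-order corrections from the $O(r^{-n-4})$ remainder. The logarithmic monomial $r^{-n}\log r\,\psi_{n,1}$ feeds its leading term into $B_n$, again with coefficient $-\LT n(n-3)$, but crucially also produces a \emph{non-logarithmic} term in $A_n$ with coefficient $\LT(2n-3)$. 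The decisive structural feature is the resonance: $n(n-3)$ vanishes exactly for $n=0$ and $n=3$, so $\psi_0$ and $r^{-3}\psi_3$ are the two terms whose leading contribution to $\square_g$ is absent, and it is precisely these that will remain free.

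I would then solve the two families of relations by induction in $k$. In the logarithmic sector, for $k\neq 3$ the equation $B_k=0$ reads $-\LT k(k-3)\psi_{k,1}=(\text{linear in }\psi_{j,1},\,j<k)$ with nonzero leading coefficient; since the induction starts from $B_1=0$, which forces $\psi_{1,1}=0$, every $\psi_{k,1}$ with $k\neq 3$ vanishes. The resonant coefficient $\psi_{3,1}$ is not seen by $B_3=0$ (which is vacuous), but is instead fixed by the non-logarithmic equation $A_3=0$: there the $\psi_3$ contribution drops out by resonance while $\psi_{3,1}$ survives with the nonzero coefficient $\LT(2\cdot3-3)=\Lambda$, and as all lower inputs already vanish this forces $\psi_{3,1}=0$, giving \eqref{eq:logrecurrence}. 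In the non-logarithmic sector, for $k\neq 0,3$ the equation $A_k=0$ has nonzero leading coefficient $-\LT k(k-3)$ and hence determines $\psi_k$ from lower-order data: $A_1=0$ gives $\psi_1=0$, $A_2=0$ gives $\psi_2$ as a multiple of $\tilde{\Delta}\psi_0$, namely $\psi_2=-\tfrac{3}{\Lambda}\tilde{\Delta}\psi_0$, and for $k\g 4$ strong induction yields $\psi_k=R_k(\psi_0,\psi_3)$, a linear differential expression in the two free functions. The equation $A_0=0$ is vacuous, consistent with $\psi_0$ being free, and $A_3=0$ has been spent eliminating $\psi_{3,1}$ rather than constraining $\psi_3$.

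Finally I would count: the $2N+1$ functions $\psi_0,\dots,\psi_N,\psi_{1,1},\dots,\psi_{N,1}$ are subject to the conditions $A_k=0$ $(0\les k\les N)$ and $B_k=0$ $(1\les k\les N)$, of which exactly two — $A_0=0$ and $B_3=0$ — are vacuous, leaving $2N-1$ genuine recurrence relations together with the two free parameters $\psi_0,\psi_3$. The main obstacle, and essentially the only subtle point, is the resonant coupling at order $k=3$: one must keep the \emph{exact} subleading coefficients of Lemma~\ref{lem:waveeqmonomial} (not just the $O(\cdot)$ remainders) in order to see that it is the non-logarithmic equation $A_3=0$ that annihilates the logarithmic coefficient $\psi_{3,1}$, while $\psi_3$ itself is left undetermined. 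Everything else is triangular in $k$ with nonvanishing diagonal away from this single resonance, so the linear system solves uniquely in terms of $\psi_0,\psi_3$ and the stated parametrisation follows.
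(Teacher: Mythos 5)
Your proposal is correct and follows essentially the same route as the paper's own proof: expand term-by-term via Lemma~\ref{lem:waveeqmonomial}, exploit the resonance of the leading coefficient $-\LT n(n-3)$ at $n=0$ and $n=3$, and observe that the non-logarithmic relation at order $r^{-3}$ is the one that annihilates $\psi_{3,1}$ (with nonzero coefficient $\LT(2\cdot 3-3)$) while leaving $\psi_3$ free, so that exactly the two relations $A_0=0$ and $B_3=0$ are vacuous. One small arithmetic point: solving $\tilde{\Delta}\psi_0+\tfrac{2\Lambda}{3}\psi_2=0$ gives $\psi_2=-\tfrac{3}{2\Lambda}\tilde{\Delta}\psi_0$ (the value used elsewhere in the paper), not $-\tfrac{3}{\Lambda}\tilde{\Delta}\psi_0$.
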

\begin{proof}
This is a straightforward application of Lemma~\ref{lem:waveeqmonomial}. In general, we have
$$\square_g \psi_{\text{asymp}} = \sum_{n=0}^{N} \frac{F_n}{r^n} + \sum_{n=1}^N \frac{\log r\>F_{n,1}}{r^n} + O\big(\frac{\log r}{r^{N+1}}\big),$$ where the coefficients $F_n$ are functions of $\psi_0,\dots,\psi_n$, $\psi_{1,1}\dots,\psi_{n,1}$, while the coefficients $F_{n,1}$ are functions of $\psi_{1,1}\dots,\psi_{n,1}$ only. From this we see that \eqref{eq:asympdecaygeneral} holds if and only if $F_0 = \dots = F_n = 0$, and $F_{1,1} = \dots = F_{n,1} = 0$, and these generate $2N+1$ recurrence relations between the $\psi_n,\psi_{n,1}$. We compute these recurrence relations in order of decay, simplifying as needed. As $\square_g \psi_0 = O(r^{-2})$, the  recurrence relation $F_0 = 0$ is trivial, and the first two nontrivial recurrence relations are 
$$F_{1,1} = \frac{2\Lambda}{3}\psi_{1,1} = 0,\quad F_{1} =\frac{2\Lambda}{3} \psi_1 - \LT \psi_{1,1} = 0,$$
which reduce to $\psi_{1,1} = \psi_1 = 0$. Assuming this holds, the next two recurrence relations are 
$$F_{2,1} = \frac{2\Lambda}{3}\psi_{2,1} = 0,\quad F_2 = \tilde{\Delta}\psi_0 + \frac{2\Lambda}{3}\psi_2 + \LT \psi_{2,1} = 0.$$
These reduce to $\psi_{2,1} = 0$ and $\psi_2 = -\frac{3}{2\Lambda}\tilde{\Delta}\psi_0$. This implies that $\psi_0$ can be freely chosen. By Lemma \ref{lem:waveeqmonomial}, we see that
$$\square_g \psi_0 = \frac{1}{r^2}\tilde{\Delta}\psi_0 + O\big(\frac{1}{r^4}\big)\quad \square_g \Big(\frac{\psi_2}{r^2}\Big) = \frac{2\Lambda}{3} \frac{\psi_2}{r^2} + O\big(\frac{1}{r^4}\big).$$
Moreover, we have $\square_g (r^{-3}\psi_3) = O(r^{-5})$, $\square_g (r^{-3}\log r\>\psi_{3,1}) = O(r^{-3})$, and so the recurrence relation $F_{3,1} = 0$ is trivial, while the recurrence relation for $F_{3}$ is 
$$F_{3,1} = \frac{\Lambda}{3} \psi_{3,1} = 0.$$
For $n \geq 4$, the coefficients $F_{n,1}$ are functions of $\psi_{1,1},\dots,\psi_{n,1}$ only. Since $\square_g (r^{-n}\log r\>\psi_{n,1}) = O(r^{-n}\log r)$ for $n \geq 4$, the recurrence relations for the $F_{n,1}$ reduce to $\psi_{n,1} = 0$ for all $4 \leq n \leq N$. Thus the coefficients of all log terms in $\psi_{\text{asymp}}^{(N)}$ are identically zero only if \eqref{eq:asympdecaygeneral} holds. While we do not compute the remaining recurrence relations explicitly, we note that the relation $F_4 = 0$ implies that $\psi_4$ is determined entirely by $\psi_0$, and for $F_5$ we have
$$F_5 = F_5(\psi_0,\psi_2,\psi_3,\psi_4,\psi_5).$$
As $\psi_2,\psi_4$ are determined by $\psi_0$, this recurrence relation also has a degree of freedom, with $\psi_3$ being the next leading-order term that can be freely chosen. Hence the remaining recurrence relations imply that the remaining terms are all determined by $\psi_0,\psi_3$.
\end{proof}
\begin{remark}\label{rmk:logterms}
Lemma~\ref{lem:recurrencerelations} depends heavily on the asymptotic expansion of the components of the metric \eqref{eq:metricstandardcoords} for Schwarzschild-de Sitter. In particular, the fact that $\psi_{3,1}$ vanishes identically relies on the fact that $\phi^{-2} = \LT r^2 + O(1)$, i.e. $\phi^{-2}$ contains no $O(r)$ term in its expansion. One can easily construct perturbations where this is not the case, leading to nonzero logarithmic terms in $\psi_{\text{asymp}}^{(N)}$. We will discuss this again when we construct scattering solutions in the perturbed setting in Section~\ref{sec:perturbations}, see in particular Lemma~\ref{lem:asympperturbed}. For the moment we emphasise that for Schwarzschild-de Sitter log terms are not present in any asymptotic solution satisfying \eqref{eq:asympdecaygeneral}.
\end{remark}
To prove existence and uniqueness of scattering solutions, we will consider the approximate solution $\psi_{\text{asymp}}^{(N)}$ with $N=3$,\footnote{With Lemma~\ref{lem:recurrencerelations}, one may construct an asymptotic solution that satisfies the wave equation up to arbitrary order in powers of $r^{-1}$ by including more terms in the asymptotic solution. By the argument given in this paper one could use this asymptotic solution to construct a true scattering solution to the wave equation with more detailed asymptotics. This would require additional regularity of the scattering data $\psi_0$, $\psi_3$ though. For the purpose of constructing exact scattering solutions to $\square_g \psi = 0$, we only require decay of our asymptotic solution like $$\square_g \psi_{\text{asymp}} = O\big(\frac{1}{r^{3+\epsilon}}\big)$$
for some $\epsilon > 0$; \eqref{eq:asympsolboundl2} in Lemma~\ref{lem:asympsolboundl2} and \eqref{eq:vest3}.} and fix $\psi_{\text{asymp}} = \psi_{\text{asymp}}(\psi_0,\psi_3)$ to be the asymptotic solution
$$\psi_{\text{asymp}} = \psi_0 - \frac{3}{2\Lambda}\frac{\tilde{\Delta}\psi_0}{r^2} + \frac{\psi_3}{r^3}.$$
It follows from Lemma~\ref{lem:recurrencerelations} that $\square_g \psi_{\text{asymp}} = O(r^{-4}).$ More specifically we have the pointwise bound 
\begin{equation}\label{eq:boxpsiasymp}
|\square_g \psi_{\text{asymp}}| \lesa_{\Lambda,m,r_0} \frac{1}{r^4}\sum_{|\alpha|\leq 4}|\tilde{\nabla}^\alpha\psi_0| +\frac{1}{r^5}\sum_{|\alpha|\leq 2}|\tilde{\nabla}^\alpha\psi_3|.
\end{equation}
In the following Lemma we show that the pointwise decay \eqref{eq:boxpsiasymp} implies certain bounds on the $L^2$-and energy norms of the asymptotic solution. Specifically  we estimate the energy of the asymptotic solution $\psi_{\text{asymp}}$ with respect to the natural energy norm \eqref{eq:naturalenergy}. We will also show that the quantity
$$\|r\phi^{1/2}\square_g \psi_{\text{asymp}}\|_{L^2(\Sigma_r)}$$
is finite and integrable in $r$, given appropriate assumptions on the regularity and decay of the scattering data $\psi_0,\psi_3$. The importance of this property, as well as the particular weights in $r$ are motivated by its inclusion in the weighted energy estimate that is proved in Proposition~\ref{prop:backwardenergyestimate}, see \eqref{eq:backwardenergyestimate} in particular.

\begin{lemma}\label{lem:asympsolboundl2}
Fix $r_0 > r_\mc{C}$, and let $\psi_0$, $\psi_3$ be functions on $\Sigma^+$ such that
$$\|\psi_0\|_{H^4(\RR\times\sph^2)},\>\>\|\psi_3\|_{H^2(\RR\times\sph^2)} < \infty.$$
Then for all $r > \rc$, $\|\psi_{\text{asymp}}\|_{\pa_r, \Sigma_r} < \infty$, and
\begin{equation}\label{eq:asympsolenergybound}
\|\psi_{\text{asymp}}\|_{\pa_r, \Sigma_r} \lesa_{\Lambda,m,r_0} \|\tilde{\nabla} \psi_0\|_{H^2(\RR\times\sph^2)} + \frac{1}{r^2}\|\psi_3\|_{H^1(\RR\times\sph^2)}.
\end{equation}
Moreover, we have $\square_g \psi_{\text{asymp}} \in L^2(\Sigma_r)$, and
\begin{equation}\label{eq:asympsolboundl2}
\int_r^{\infty}\|r\phi^{1/2}\square_g \psi_{\text{asymp}}\|_{L^2(\Sigma_r)} \>\de r \lesa_{\Lambda,m,r_0} \frac{1}{r}\|\psi_0\|_{H^4(\RR\times\sph^2)}+ \frac{1}{r^2}\|\psi_3\|_{H^2(\RR\times\sph^2)}
\end{equation}
for all $r \geq r_0$.
\end{lemma}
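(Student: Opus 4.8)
The plan is to prove the two assertions of Lemma~\ref{lem:asympsolboundl2} separately, in each case reducing everything to the explicit pointwise bounds already established. For the energy bound \eqref{eq:asympsolenergybound}, I would start from the definition \eqref{eq:naturalenergy} of $E[\psi_{\text{asymp}}] = \|\psi_{\text{asymp}}\|_{\pa_r,\Sigma_r}^2$ and recall the large-$r$ expression $E[\psi] \sim r^4\|\pa_r\psi\|_{L^2(\RR\times\sph^2)}^2 + \|\tilde\nabla\psi\|_{L^2(\RR\times\sph^2)}^2$ noted in Section~\ref{sec:notation}. The key observation is that the weight $r^4$ is exactly compensated by the radial decay of each term in $\psi_{\text{asymp}} = \psi_0 - \tfrac{3}{2\Lambda}r^{-2}\tilde\Delta\psi_0 + r^{-3}\psi_3$: differentiating in $r$ produces factors of $r^{-3}$ and $r^{-4}$, so that $r^4(\pa_r\psi_{\text{asymp}})^2$ contributes terms controlled by $\|\tilde\Delta\psi_0\|_{L^2}^2$ and $r^{-2}\|\psi_3\|_{L^2}^2$. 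Meanwhile the tangential derivative $\tilde\nabla\psi_{\text{asymp}}$ to leading order is just $\tilde\nabla\psi_0$, with lower-order corrections again carrying negative powers of $r$. Collecting the top-order terms and estimating $\|\tilde\Delta\psi_0\|_{L^2} \lesssim \|\tilde\nabla\psi_0\|_{H^2}$ via elliptic regularity on $\RR\times\sph^2$ yields \eqref{eq:asympsolenergybound}. I would be careful to track the exact Sobolev orders: the $\psi_0$ term needs two extra tangential derivatives beyond $\tilde\nabla\psi_0$ (hence $H^2$ of $\tilde\nabla\psi_0$, i.e. effectively $H^3$ of $\psi_0$), while the $\psi_3$ contribution only needs one derivative, matching the stated $H^1$ norm with its $r^{-2}$ prefactor.

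For the integrated inhomogeneity bound \eqref{eq:asympsolboundl2}, the plan is to feed the pointwise estimate \eqref{eq:boxpsiasymp} directly into the $L^2(\Sigma_r)$ norm. Using $\de\mu_{\ol g_r} \sim r^3\,\de t\,\de\mu_{\mr\gamma}$ and $\phi \sim r^{-1}$, the weight $r\phi^{1/2}$ scales like $r^{1/2}$, so that $\|r\phi^{1/2}\square_g\psi_{\text{asymp}}\|_{L^2(\Sigma_r)}^2 \sim \int r\,|\square_g\psi_{\text{asymp}}|^2\,r^3\,\de t\,\de\mu_{\mr\gamma} = \int r^4|\square_g\psi_{\text{asymp}}|^2$. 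Inserting \eqref{eq:boxpsiasymp}, the leading contribution $r^{-4}\sum_{|\alpha|\le4}|\tilde\nabla^\alpha\psi_0|$ gives $r^4 \cdot r^{-8}$ inside the integral, so $\|r\phi^{1/2}\square_g\psi_{\text{asymp}}\|_{L^2(\Sigma_r)} \lesssim r^{-2}\|\psi_0\|_{H^4} + r^{-3}\|\psi_3\|_{H^2}$. Integrating these in $r$ from $r$ to $\infty$ produces the claimed $\tfrac1r\|\psi_0\|_{H^4} + \tfrac1{r^2}\|\psi_3\|_{H^2}$, confirming both finiteness and integrability. The Sobolev orders $H^4$ and $H^2$ are exactly those appearing in \eqref{eq:boxpsiasymp}, so no elliptic gain is needed here.

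The main obstacle, such as it is, lies in being scrupulous about the transition between $\Sigma_r$-norms (with the induced volume $\de\mu_{\ol g_r}$ and the geometric weights $\phi$, $r^2$) and the flat cylinder norms $H^k(\RR\times\sph^2)$, since the two differ by $r$-dependent conformal factors that must be tracked uniformly for $r \ge r_0 > \rc$. In particular one must verify that the subleading terms suppressed in the $\sim$ relations of Section~\ref{sec:notation} and in the $O(\cdot)$ of Lemma~\ref{lem:waveeqmonomial} genuinely contribute only at lower order in $r$ and do not spoil the stated Sobolev indices; this requires the lower bound $\phi^{-2} = \LT r^2 - 1 + \tfrac{2m}{r} \gtrsim_{\Lambda,m,r_0} r^2$ valid on $\{r \ge r_0\}$, which keeps all constants uniform. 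The implicit dependence of the constants on $\Lambda, m, r_0$ is then transparent from these comparisons. Everything else is a direct substitution of the already-proven pointwise bounds into the relevant integral norms, so the proof is essentially bookkeeping once the weight-counting above is set up correctly.
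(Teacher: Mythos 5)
Your proposal is correct and follows essentially the same route as the paper: the energy bound is obtained by the triangle inequality and direct weight-counting on the three terms of $\psi_{\text{asymp}}$ after converting $\|\cdot\|_{\pa_r,\Sigma_r}$ to cylinder norms, and \eqref{eq:asympsolboundl2} by substituting the pointwise bound \eqref{eq:boxpsiasymp} into the weighted $L^2(\Sigma_r)$ norm and integrating in $r$. The only blemish is the intermediate claim that $r^4(\pa_r\psi_{\text{asymp}})^2$ contributes $r^{-2}\|\psi_3\|_{L^2}^2$ — the correct count is $r^{-4}\|\psi_3\|_{L^2}^2$, which is what you need (and what your closing remark about the $r^{-2}$ prefactor at the norm level correctly anticipates) — so this is a harmless slip rather than a gap.
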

\begin{proof}
An application of the triangle inequality and the definition of $\psi_{\text{asymp}}$ implies
$$\|\psi_{\text{asymp}}\|_{\pa_r,\Sigma_r} \lesa \|\psi_0\|_{\pa_r,\Sigma_r} + \|r^{-2}\tilde{\Delta}\psi_0\|_{\pa_r,\Sigma_r} + \|r^{-3}\psi_3\|_{\pa_r,\Sigma_r}.$$
From the definitions of the $\|\cdot\|_{\pa_r,\Sigma_r}$ and $\|\cdot\|_{L^2(\Sigma_r)}$ norms, we have
$$\|\psi\|_{\pa_r, \Sigma_r} \lesa_{\Lambda,m,r_0} r^2\|\pa_r \psi_{\text{asymp}}\|_{L^2(\RR\times\sph^2)} + \|\tilde{\nabla}\psi_{\text{asymp}}\|_{L^2(\RR\times\sph^2)},$$
and so
\begin{align*}
\|\psi_{\text{asymp}}\|_{\pa_r, \Sigma_r} \lesa_{\Lambda,m,r_0}\> \|\tilde{\nabla}\psi_0\|_{L^2(\RR\times\sph^2)} +\frac{1}{r}\|\tilde{\Delta}\psi_0\|_{L^2(\RR\times\sph^2)} + \frac{1}{r^2}\|\tilde{\nabla}(\tilde{\Delta}\psi_0)\|_{L^2(\RR\times\sph^2)}\\*+\frac{1}{r^2}\|\psi_3\|_{L^2(\RR\times\sph^2)}+\frac{1}{r^3}\|\tilde{\nabla}\psi_3\|_{L^2(\RR\times\sph^2)},
\end{align*}
which implies \eqref{eq:asympsolenergybound}.

To prove \eqref{eq:asympsolboundl2}, observe that by \eqref{eq:boxpsiasymp} and
the fact that $\de \mu_{\ol{g}_r} = r^2\phi^{-1}\de \mu_{\tilde{g}}$, we have
$$\|r\phi^{1/2}\square_g \psi_{\text{asymp}}\|_{L^2(\Sigma_r)} \lesa_{\Lambda,m,r_0} \frac{1}{r^2}\|\psi_0\|_{H^4(\RR\times\sph^2)} + \frac{1}{r^3}\|\psi_3\|_{H^2(\RR\times\sph^2)}.$$
Therefore $\|r \phi^{1/2}\square_g \psi_{\text{asymp}} (r)\|_{L^2(\Sigma_r)}$ is integrable in $r$, and integrating in $r$ yields \eqref{eq:asympsolboundl2}.
\end{proof}


\subsection{Weighted energy estimate for the backward problem}\label{sec:backwardestimate}
We now prove the weighted energy estimate of \textbf{Proposition~\ref{prop:backwardenergyestimate}} for the wave equation on the expanding region of Schwarzschild-de Sitter. We identify a vectorfield multiplier whose associated energy is decreasing backwards in time, relating to a ``global redshift'' on the expanding region that was first explored in \cite{Sch15}. In that paper however, energy estimates are constructed for forward solutions to \eqref{eq:waveequation}, and so the vectorfield multiplier in the present paper differs from that in \cite{Sch15}. This energy estimate will be applied not to solutions of \eqref{eq:waveequation}, but to a small remainder, obtained by subtracting an asymptotic solution which we constructed in Section~\ref{sec:asympsol}. This remainder will satisfy an inhomogeneous wave equation.

Following notation from \cite{Sch22}, let $\mc{D}_{r_1,r_2}^{(\ol{u},\ol{v})}\subset \mc{R}^+$ denote the spacetime region
\begin{equation}\label{eq:backwardestimatedomain}\mc{D}_{r_1,r_2}^{(\ol{u},\ol{v})} = \bigcup_{r_1 \leq r \leq r_2}\Sigma_r \bigcap \{u \geq \ol{u}\}\bigcap \{v \geq \ol{v}\},\end{equation}
as depicted in Figure \ref{fig:backwardestimatedomain}. Here, $u,v$ are the null Kruskal coordinates defined in Section~\ref{sec:notation}. It is apparent that $\mc{D}_{r_1,r_2}^{(\ol{u},\ol{v})}$ is bounded by the two spacelike hypersurfaces $\Sigma_{r_1}^c$, $\Sigma_{r_2}^c$, and two null hypersurfaces, which we denote by $\mc{F}_{\ol{u}}^c,\mc{G}_{\ol{v}}^c$. The superscript $c$ denotes that these hypersurfaces are `capped'.
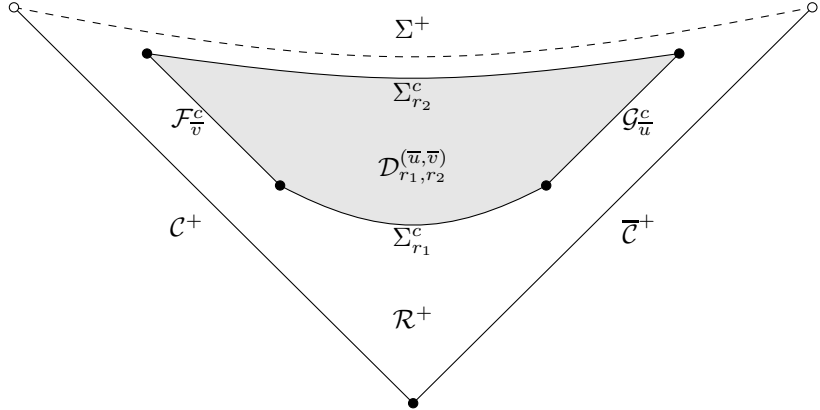
\begin{figure}[t]

\centering
\begin{tikzpicture}[scale=1.75]
\draw (0,3) -- (3,0);
\draw (3,0) -- (6,3);
\draw[dashed] (0,3) .. controls (2.5,2.5) and (3.5,2.5) .. (6,3);
\filldraw  (3,0) circle (0.035cm);
\filldraw[fill=white] (0,3) circle (0.035cm);
\filldraw[fill=white] (6,3) circle (0.035cm);
\draw (1.3,1.5) node[below]{$\mc{C}^+$};
\draw (4.7,1.5) node[below]{$\ol{\mc{C}}^+$};
\draw (3,2.7) node[above]{$\Sigma^+$};
\draw (3,0.5) node[above]{$\mc{R}^+$};
\filldraw[fill=gray!20] (2,1.65) -- (1,2.65) .. controls (2.75,2.4) and (3.25,2.4) .. (5,2.65) -- (4,1.65) .. controls (3.25,1.25) and (2.75,1.25) .. (2,1.65);
\filldraw  (2,1.65) circle (0.035cm);
\filldraw  (1,2.65) circle (0.035cm);
\filldraw  (4,1.65) circle (0.035cm);
\filldraw  (5,2.65) circle (0.035cm);
\draw (3,2.5) node[below]{$\Sigma_{r_2}^c$};
\draw (3,1.4) node[below]{$\Sigma_{r_1}^c$};
\draw (3,2) node[below]{$\mc{D}_{r_1,r_2}^{(\ol{u},\ol{v})}$};
\draw (1.5,2.3) node[below left]{$\mc{F}_{\ol{v}}^c$};
\draw (4.5,2.3) node[below right]{$\mc{G}_{\ol{u}}^c$};
\end{tikzpicture}
\caption{Spacetime domain $\mc{D}_{r_1,r_2}^{(\ol{u},\ol{v})}$ of the backwards weighted energy estimate.}\label{fig:backwardestimatedomain}
\end{figure}
Let $X$ be a timelike vectorfield. Given some scalar function $\psi$, we apply the divergence theorem to the energy current $J^{X}[\psi]$ on $\mc{D}_{r_1,r_2}^{(\ol{u},\ol{v})}$, which gives
\begin{multline}\label{eq:divergencetheorem}
\int_{\Sigma_{r_1}^c} J^{X}[\psi] \cdot n_{\Sigma_r} +\int_{\mc{F}_{\ol{v}}^c} \>^*J^{X}[\psi] + \int_{\mc{G}_{\ol{u}}^c} \>^*J^{X}[\psi]\\*
= \int_{\Sigma_{r_2}^c} J^{X}[\psi] \cdot n_{\Sigma_r} \de \mu_{\ol{g}_r} + \int_{\mc{D}_{r_1,r_2}^{(\ol{u},\ol{v})}}\nabla \cdot J^{X}[\psi] \de \mu_{g}
,
\end{multline}
where $ \>^*J^{M}[\psi]$ denotes the Hodge dual of the energy current $J^M[\psi]$. 
Note that the boundary integrals in \eqref{eq:divergencetheorem} are over spacelike or null hypersurfaces. For a timelike future pointing vectorfield $X$, these boundary integrals will all be nonnegative. The key to producing energy estimates for wave equations lies in identifying vectorfield multipliers $X$ for which the divergence of the energy current $\nabla \cdot J^X[\psi]$ is coercive in some sense, so that we may control the bulk integral in \eqref{eq:divergencetheorem}. By the Leibniz product rule we have
\begin{equation}\label{eq:currentdivergence}
\nabla \cdot J^X [\psi] = K^X[\psi] + (X\psi) (\square_g \psi),
\end{equation}
where
\begin{equation}\label{eq:seconcurrentdef}
K^X[\psi] = \>^{(X)}\pi^{\mu\nu}T_{\mu\nu}[\psi].
\end{equation}
We define the vectorfield
\begin{equation}\label{eq:vfbackwardenergy}
M \doteq r^2\phi^{-2}\frac{\pa}{\pa r} = r^2\Big(\LT r^2 - 1 + \frac{2m}{r}\Big)\frac{\pa}{\pa r}
\end{equation}
We emphasize that $M$ is timelike, and for large $r$, $M \sim r^4\pa_r$. In the following proposition, we will show that the associated energy current \eqref{eq:seconcurrentdef} is nonpositive.
\begin{proposition}\label{prop:vfbackwardenergy}
Let $M$ denote the vectorfield \eqref{eq:vfbackwardenergy}. Then everywhere on $\mc{R}^+$, we have
$$K^M[\psi] \leq 0.$$
\end{proposition}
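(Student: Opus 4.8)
The plan is to compute $K^M[\psi]={}^{(M)}\pi^{\mu\nu}T_{\mu\nu}[\psi]$ explicitly in the coordinates $(r,t,\theta,\varphi)$, exploiting that the metric \eqref{eq:metricstandardcoords} is diagonal and that $M=f\,\pa_r$ is a multiple of a single coordinate field, where $f\doteq r^2\phi^{-2}$. Writing the deformation tensor as ${}^{(M)}\pi_{\mu\nu}=\tfrac12(\mc{L}_M g)_{\mu\nu}$ and using the Lie-derivative formula $(\mc{L}_M g)_{\mu\nu}=f\,\pa_r g_{\mu\nu}+g_{r\nu}f'\delta^r_\mu+g_{\mu r}f'\delta^r_\nu$, I would first record that ${}^{(M)}\pi$ is diagonal, its four entries being one-dimensional expressions in $r$ built from $\phi^{-2}$ and its $r$-derivative. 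Raising indices and contracting against the energy--momentum tensor then presents $K^M[\psi]$ as a sum of the coefficients of $(\pa_r\psi)^2$, $(\pa_t\psi)^2$ and $|\slashed\nabla\psi|^2$.

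The decisive algebraic feature I expect --- and the reason $M$ is the correct multiplier for the backward problem --- is that the coefficient of $(\pa_r\psi)^2$ cancels identically, after combining the $\pi^{rr}$ contribution with the trace term $-\tfrac12 g^{rr}\,\mathrm{tr}\,{}^{(M)}\pi$. What remains is
$$K^M[\psi]=-2r\,(\pa_t\psi)^2-\tfrac12\,\pa_r\big(r^2\phi^{-2}\big)\,|\slashed\nabla\psi|^2.$$
The first term is manifestly nonpositive since $r>0$ throughout $\mc{R}^+$, so the entire statement reduces to the sign of the angular coefficient, i.e. to showing $f'=\pa_r(r^2\phi^{-2})\g 0$ on $[\rc,\infty)$.

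This last positivity is the only step that genuinely uses the geometry of subextremal Schwarzschild--de Sitter, and I regard it as the main point of the proof. I would write $f'=2r\phi^{-2}+r^2\pa_r(\phi^{-2})$ and argue in two stages. First, $\phi^{-2}=\LT r^2-1+\tfrac{2m}{r}\g 0$ on $[\rc,\infty)$ because $\rc$ is the largest root of $\phi^{-2}$ and $\phi^{-2}\ra+\infty$. Second, $\pa_r(\phi^{-2})=\tfrac{2\Lambda}{3}r-\tfrac{2m}{r^2}$ has strictly positive $r$-derivative, hence is increasing, and so is positive on $[\rc,\infty)$ provided it is positive at $\rc$; evaluating at $\rc$ and using the horizon relation $\LT\rc^3=\rc-2m$ reduces this to $\rc>3m$. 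Finally $\rc>3m$ follows from the subextremality assumption $3m<1/\sqrt{\Lambda}$, since a direct evaluation gives $\phi^{-2}(3m)=3\Lambda m^2-\tfrac13<0$, which places $3m$ in the interval where $\phi^{-2}$ is negative and hence strictly below $\rc$. Combining the two stages yields $f'>0$ on $[\rc,\infty)$ and therefore $K^M[\psi]\les 0$ everywhere on $\mc{R}^+$. The only potential subtlety is the bookkeeping in the contraction that forces the $(\pa_r\psi)^2$ coefficient to vanish, but this is a finite diagonal computation rather than a conceptual difficulty.
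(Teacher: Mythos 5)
Your proposal is correct and follows essentially the same route as the paper: compute the deformation tensor of $M$ (the paper does this via Christoffel symbols rather than the Lie-derivative formula, a purely cosmetic difference), contract against $T_{\mu\nu}[\psi]$, observe that the $(\pa_r\psi)^2$ coefficient cancels identically, and conclude from $K^M[\psi]=-2r(\pa_t\psi)^2-r\big(\tfrac{2\Lambda}{3}r^2-1+\tfrac{m}{r}\big)|\slashed\nabla\psi|^2$ (your $\tfrac12\pa_r(r^2\phi^{-2})$ equals this angular coefficient) using positivity of $\phi^{-2}$ and $r>\rc>3m$. Your two-stage positivity argument is a minor rearrangement of the paper's identity $\tfrac{2\Lambda}{3}r^2-1+\tfrac{m}{r}=\tfrac{2}{\phi^2}+\tfrac{1}{r}(r-3m)$, and you additionally supply the subextremality argument for $\rc>3m$, which the paper asserts without proof.
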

\begin{proof}
The deformation tensor of $M$ is given by
\begin{align*}^{(M)}\pi^{\mu\nu} &= \frac{1}{2}(\nabla^{\mu}M^{\nu}+\nabla^{\nu}M^{\mu})\\*
&= \frac{1}{2}\Big(g^{\mu\lambda}\pa_\lambda M^\nu + g^{\nu\lambda}\pa_\lambda M^\mu+g^{\mu\lambda}\Gamma_{\lambda\eta}^\nu M^\eta + g^{\nu\lambda}\Gamma_{\lambda\eta}^\mu M^\eta\Big).\\*
\end{align*}
From \eqref{eq:metricstandardcoords}, we find that the nonzero Christoffel symbols of Schwarzschild-de Sitter on $\mc{R}^+$ are 
\begin{gather*}
\Gamma_{rr}^r = -\Big(\LT r - \frac{m}{r^2}\Big)\phi^2, \quad \Gamma_{rt}^t = \Big(\LT r - \frac{m}{r^2}\Big)\phi^2, \quad \Gamma_{tt}^r = \Big(\LT r - \frac{m}{r^2}\Big)\phi^{-2}.\\*
\Gamma_{AB}^r = \frac{\phi^{-2}}{r}g_{AB}, \quad \Gamma_{B r}^A = \frac{1}{r}\delta_B^A .
\end{gather*}
Another computation reveals that the nonzero components of the deformation tensor of $M$ are
\begin{gather*}
\>^{(M)}\pi^{tt} = \big(\LT r^3-m\big)\phi^2,\quad \>^{(M)}\pi^{rr} = \frac{\LT r^3 - m}{\phi^2} - \frac{r}{\phi^2}\Big(\frac{4\Lambda}{3} r^2 - 2 + \frac{2m}{r}\Big).\\*
\>^{(M)}\pi^{\theta\theta} = \frac{1}{r\phi^2},\quad \>^{(M)}\pi^{\varphi\varphi} = \frac{1}{r\phi^2\sin^2\theta}.
\end{gather*}

Then the corresponding components of the stress-energy-momentum tensor are

\begin{gather*}
T_{tt}[\psi] = \frac{1}{2}\Big((\pa_t\psi)^2 + \frac{1}{\phi^4}(\pa_r\psi)^2 - \frac{1}{\phi^2}|\slashed\nabla\psi|^2 \Big).\\*
T_{rr}[\psi] = \frac{1}{2} \Big(\phi^4(\pa_t\psi)^2 + (\pa_r\psi)^2 + \phi^2|\slashed\nabla \psi|^2 \Big).\\*
T_{\theta\theta}[\psi] = \frac{1}{2} \Big(-r^2\phi^2(\pa_t \psi)^2 + \frac{r^2}{\phi^2}(\pa_r \psi)^2+(\pa_\theta\psi)^2  - \frac{1}{\sin^2\theta}(\pa_{\varphi}\psi)^2 \Big).\\*
T_{\varphi\varphi}[\psi] = \frac{\sin^2\theta}{2} \Big(-r^2\phi^2(\pa_t \psi)^2 + \frac{r^2}{\phi^2}(\pa_r \psi)^2 - (\pa_\theta\psi)^2  + \frac{1}{\sin^2\theta} (\pa_{\varphi}\psi)^2 \Big).
\end{gather*}
We then compute the sum $\>^{(M)}\pi^{\mu\nu} T_{\mu\nu}[\psi]$. The sum of the angular components are
\begin{equation*}\>^{(M)}\pi^{\theta\theta}T_{\theta\theta}[\psi] + \>^{(M)}\pi^{\varphi\varphi}T_{\varphi\varphi}[\psi] = -r(\pa_t \psi)^2 + \frac{r}{\phi^4}(\pa_r \psi)^2,
\end{equation*}
while the time and radial components sum to
\begin{align*}\>^{(M)}\pi^{tt}T_{tt}[\psi] + \>^{(M)}\pi^{rr}T_{rr}[\psi] &= \big(\LT r^3-m\big)\phi^2(\pa_t\psi)^2 + \frac{\LT r^3-m}{\phi^2} (\pa_r \psi)^2 \\*
&- \frac{r}{\phi^2}\Big(\frac{2\Lambda}{3} r^2 -1 + \frac{m}{r}\Big)\Big(\phi^4(\pa_t\psi)^2 + (\pa_r\psi)^2+\phi^2|\slashed\nabla\psi|^2\Big).
\end{align*}
Adding all terms together gives
\begin{align*}
\>^{(M)}\pi^{\mu\nu} T_{\mu\nu}[\psi] &= (r-3m)\phi^2(\pa_t \psi)^2 + \frac{r}{\phi^2}\Big(\frac{2\Lambda}{3}r^2-1+\frac{m}{r}\Big)(\pa_r \psi)^2 \\*
&- \frac{r}{\phi^2}\Big(\frac{2\Lambda}{3} r^2 - 1+\frac{m}{r}\Big)\Big(\phi^4(\pa_t\psi)^2 + (\pa_r\psi)^2+\phi^2|\slashed\nabla\psi|^2\Big)\\*
&=\phi^2\Big(r-3m-r\Big(\frac{2\Lambda}{3}r^2-1+\frac{m}{r}\Big)\Big)(\pa_t\psi)^2-r\Big(\frac{2\Lambda}{3}r^2-1+\frac{m}{r}\Big)|\slashed\nabla\psi|^2.\\*
&=-2r(\pa_t\psi)^2-r\Big(\frac{2\Lambda}{3}r^2-1+\frac{m}{r}\Big)|\slashed\nabla\psi|^2
\end{align*}
The function $\frac{2\Lambda}{3} r^2 - 1+\frac{m}{r}$
is positive on $\mc{R}^+$, since we have
\begin{equation*}
\Big(\frac{2\Lambda}{3} r^2 - 1+\frac{m}{r}\Big) = \frac{2}{\phi^2} + \frac{1}{r}(r-3m),
\end{equation*}
and $r > \rc > 3m$. Hence $K^M[\psi] \leq 0$.
\end{proof}

\begin{remark}
Due to the identity \eqref{eq:currentdivergence}, an immediate consequence of Proposition~\ref{prop:vfbackwardenergy} is that the divergence of the standard energy current $J^M[\psi]$ obeys the bound\begin{equation}\label{eq:currentdivergencebound}
\nabla \cdot J^M[\psi] \leq (M\psi)(\square_g \psi).
\end{equation}
\end{remark}
\begin{remark}
The multiplier $M$ and its associated bound \eqref{eq:currentdivergencebound} is sharp in the following sense. Given an exponent $\alpha > 0$, the vectorfield $r^\alpha M$ is also timelike on $\mc{R}^+$, and possesses the property that the associated energy current $K^{r^\alpha M}$ obeys the bound
\begin{equation}\label{eq:currentalpha}
\phi K^{r^\alpha M} \leq - \frac{\alpha}{r}J^{r^\alpha M}[\psi]\cdot n_{\Sigma_r}.
\end{equation}
To see this, observe that by the product rule we have
\begin{align*}
\>^{(r^\alpha M)}\pi^{\mu\nu} &= r^{\alpha}\>^{(M)}\pi^{\mu\nu} + \frac{1}{2} \Big(g^{\mu\lambda}\pa_\lambda(r^\alpha) M^\nu +g^{\nu\lambda}\pa_\lambda(r^\alpha) M^\mu\Big)\\*
&=r^{\alpha}\>^{(M)}\pi^{\mu\nu} - \frac{1}{2}\alpha r^{\alpha-1}\phi^{-2}\Big(\delta_{r}^\mu M^\nu + \delta_{r}^\nu M^\nu\Big),
\end{align*}
and so 
\begin{equation*}
\phi K^{r^\alpha M} = r^\alpha \phi K^M - \alpha r^{\alpha-1} J^M[\psi]\cdot n_{\Sigma_r}.
\end{equation*}
As $K^M \leq 0$ and  $r^\alpha J^M[\psi] = J^{r^\alpha M}[\psi]$, we have \eqref{eq:currentalpha}. Proposition~\ref{prop:vfbackwardenergy} corresponds to the endpoint case $\alpha = 0$. Through a Gr\"onwall-type inequality, \eqref{eq:currentalpha} implies an energy estimate similar to \eqref{eq:backwardenergyestimate}. If $\square_g \psi = 0$, the resulting energy estimate is in fact the same. Howeve we will be applying this energy estimate to solutions of the inhomogeneous wave equation $\square_g \psi = F$, and the term containing $F$ in \eqref{eq:backwardenergyestimate} is multiplied by an extra factor of $r^\alpha$, so the estimate is no longer sharp for $\alpha > 0$.
\end{remark}

\begin{proof}[Proof of Proposition~\ref{prop:backwardenergyestimate}]
We will apply the divergence theorem to the energy current $J^M[\psi]$ on the spacetime region $\mc{D}_{r_1,r_2}^{(\ol{u},\ol{v})}$ defined in \eqref{eq:backwardestimatedomain}, which implies by \eqref{eq:divergencetheorem} that
\begin{multline*}
\int_{\Sigma_{r_1}^c} J^{M}[\psi] \cdot n_{\Sigma_r} +\int_{\mc{F}_{\ol{v}}^c} \>^*J^{M}[\psi] + \int_{\mc{G}_{\ol{u}}^c} \>^*J^{M}[\psi]\\*
= \int_{\Sigma_{r_2}^c} J^{M}[\psi] \cdot n_{\Sigma_r} \de \mu_{\ol{g}_r} + \int_{\mc{D}_{r_1,r_2}^{(\ol{u},\ol{v})}}\nabla \cdot J^{M}[\psi] \de \mu_{g},
\end{multline*}
We write the boundary integrals on $\Sigma_{r_1}^c,\Sigma_{r_2}^c$ in terms of the energy norms $\|\cdot\|_{M,\Sigma_{r}^c}$ and apply the coarea formula to the bulk term, which implies
\begin{equation*}
\|\psi\|_{M,\Sigma_{r_1}^c}^2 +  \int_{\mc{F}_{\ol{v}}^c} \>^*J^{M}[\psi] + \int_{\mc{G}_{\ol{u}}^c} \>^*J^{M}[\psi] = \|\psi\|_{M,\Sigma_{r_2}^c}^2 + \int_{r_1}^{r_2} \Big(\int_{\Sigma_r}\phi \nabla \cdot J^M[\psi]\> \de\mu_{\ol{g}_r}\Big)\de r.
\end{equation*}
We then differentiate in $r_2$, keeping $r_1,\ol{u},\ol{v}$ fixed so that
\begin{equation*}
\pa_{r_2} \Big(\int_{\mc{F}_{\ol{v}}^c} \>^*J^{M}[\psi] + \int_{\mc{G}_{\ol{u}}^c} \>^*J^{M}[\psi]\Big) = \pa_{r_2} (\|\psi\|_{M,\Sigma_{r_2})}^2) + \int_{\Sigma_{r_2}} \phi \nabla \cdot J^M[\psi] \de\mu_{\ol{g}_r}.
\end{equation*}
The left hand side of the above expression is nonnegative, as $\>^*J^M[\psi]$ restricted to a null hypersurface is nonnegative, and the hypersurfaces $\mc{F}_{\ol{v}}^c, \mc{G}_{\ol{u}}^c$ are increasing in $r_2$. Hence the right hand side is also nonnegative. 

From Proposition~\ref{prop:vfbackwardenergy} the current $K^M[\psi] \leq 0$, and so we bound
\begin{equation*}
\int_{\Sigma_{r_2}^c} \phi \nabla \cdot J^M[\psi] \de\mu_{\ol{g}_r} \leq \int_{\Sigma_{r_2}^c} \phi(M\psi)(\square_g \psi) \de\mu_{\ol{g}_r}.
\end{equation*}
We then apply the Cauchy-Schwartz inequality to the right hand side above which implies
\begin{equation*}
\int_{\Sigma_{r_2}^c} \phi(M\psi)(\square_g \psi) \de\mu_{\ol{g}_r} \leq 2\Big(\frac{1}{2}\int_{\Sigma_{r_2}^c}\frac{\phi}{r^2}(M\psi)^2\de\mu_{\ol{g}_r}\Big)^{1/2}\Big(\frac{1}{2}\int_{\Sigma_{r_2}^c}r^2\phi F^2\de\mu_{\ol{g}_r}\Big)^{1/2}.
\end{equation*}
By the definition of $M$, we have the bound
\begin{equation*}
\frac{1}{2}\frac{\phi}{r^2}(M\psi)^2 == \frac{1}{2}(n_{\Sigma_{r}}\psi)(M\psi) 
\leq J^M[\psi]\cdot n_{\Sigma_r},
\end{equation*}
therefore one can bound the first integral on the right hand side by $\|\psi\|_{M,\Sigma_{r_2}^c}$, while the second integral is equivalent to $\frac{1}{\sqrt{2}}\|r\phi^{1/2}F\|_{L^2(\Sigma_{r_2}^c)}$. Meanwhile,
$$\pa_{r_2}(\|\psi\|_{M,\Sigma_{r_2}^c}^2) = 2\|\psi\|_{M,\Sigma_{r_2}^c}\pa_{r_2}\|\psi\|_{M,\Sigma_{r_2}^c},$$
and so
$$0 \leq 2\|\psi\|_{M,\Sigma_{r_2}^c} \pa_r \|\psi\|_{M,\Sigma_{r_2}^c} + \frac{2}{\sqrt{2}}\|\psi\|_{M,\Sigma_{r_2}^c}\|r\phi^{1/2}F\|_{L^2(\Sigma_{r_2}^c)}.$$
We then divide through by $2\|\psi\|_{M,\Sigma_{r_2}^c}$ and integrate in the radial coordinate on the interval $[r_1,r_2]$, yielding a localised energy estimate,
\begin{equation}\label{eq:backwardestimatelocal}
\|\psi\|_{M,\Sigma_{r_1}^c} \leq \|\psi\|_{M,\Sigma_{r_2}^c} + \int_{r_1}^{r_2} \frac{1}{\sqrt{2}}\|r\phi^{1/2}F\|_{L^2(\Sigma_r^c)}\> \de r.
\end{equation}
The capped hypersurfaces $\Sigma_r^c$ are described precisely as
$$\Sigma_r^c = \Sigma_r \cap \{u \geq \ol{u}\}\cap \{v \geq \ol{v}\},$$
where $\ol{u},\ol{v} > 0$ to ensure that the $\Sigma_r^c$ are compact. We retrieve the global estimate \eqref{eq:backwardenergyestimate} by taking limits $\ol{u},\ol{v} \ra 0$.
\end{proof}
\begin{remark}\label{KillingCommute}
Theorem~\ref{prop:backwardenergyestimate} can be used to create an array of higher order estimates. The key observation is that the tangent spaces of the hypersurfaces $\Sigma_r$ are spanned by Killing vectorfields. These consist of the Killing vectorfield $T$ that coincides in the expanding region with the coordinate vectorfield $\pa/\pa_t$, and the generators of the spherical isometries $\Omega_i :i=1,2,3$. The wave equation on a given Lorentzian manifold commutes with the Killing vectorfields of that manifold, and so one can commute the estimate Theorem~\eqref{prop:backwardenergyestimate} with these Killing vectorfields, yielding estimates of the form
\begin{multline}\label{Redshift1Com}
\|T^k\Omega_{i_1}\cdots\Omega_{i_l}\psi\|_{M, \Sigma_{r_1}} \leq \|T^k\Omega_{i_1}\cdots\Omega_{i_l}\psi\|_{M, \Sigma_{r_2}}\\* 
+ \frac{1}{\sqrt{2}}\int_{r_1}^{r_2}\|r\phi^{1/2}T^k\Omega_{i_1}\cdots\Omega_{i_l}F\|_{L^2(\Sigma_r)}\>\de r,
\end{multline}
for $k \in \NN, i_1,\dots,i_l = 1,2,3$. We will use such estimates in Section~\ref{sec:pointwisescattering} to obtain pointwise estimates for solutions to the backward scattering problem.
\end{remark}

\subsection{Existence of scattering solutions to the wave equation on the expanding region \texorpdfstring{$\mc{R}^+$}{R plus}: proof of Theorem~\ref{thm:scatteringwithouthorizon}}\label{sec:scatteringinterior}
We now prove that solutions to $\square_g \psi = 0$ exist with prescribed scattering data on $\Sigma^+$. Fix $r_0 > \rc$. For $R \geq r_0$ let $\psi_R = \psi_{\text{asymp}} + \psi_{\text{rem}}^{R}$, where $\psi_{\text{rem}}^{R}$ solves the inhomogeneous wave equation
\begin{equation}\label{RemEq}
\square_g \psi_{\text{rem}}^{R} = -\square_g \psi_{\text{asymp}},
\end{equation}
with trivial initial data on the hypersurface $\Sigma_{R}$, so that
$$\psi_{\text{rem}}^{R}|_{\Sigma_R} = 0,\quad n_{\Sigma_R}\cdot \psi_{\text{rem}}^{R}|_{\Sigma_R} = 0.$$
Let $R_1,R_2 \geq r_0$ and assume without loss of generality that $R_1 < R_2$. Then consider the difference 
\begin{equation*}
v = \psi_{R_2} - \psi_{R_1} = \psi_{\text{rem}}^{R_2}-\psi_{\text{rem}}^{R_1}.
\end{equation*}
We will prove that $v \ra 0$ with respect to the weighted energy norm $\|\cdot\|_{M, \Sigma_r}$ as $R_1,R_2 \ra \infty$. Clearly $v$ satisfies the homogeneous wave equation $\square_g v = 0$, so we may apply the weighted energy estimate from Proposition~\ref{prop:backwardenergyestimate} to $v$ on the spacetime region 
$$\mc{D}_{(r,R_1)} = \bigcup_{r \leq \ol{r} \leq R_1} \Sigma_{\ol{r}},$$
which gives
\begin{equation}\label{eq:vest1}
\|v\|_{M,\Sigma_{r}} \leq \|v\|_{M,\Sigma_{R_1}}.
\end{equation}
As $\psi_{\text{rem}}^{R_1}$ has trivial data on $\Sigma_{R_1}$, we have
$$\|v\|_{M,\Sigma_{R_1}} =\|\psi_{\text{rem}}^{R_2}\|_{M,\Sigma_{R_1}}$$
Then we apply the weighted energy estimate from Proposition~\ref{prop:backwardenergyestimate} again, now to $\psi_{\text{rem}}^{R_2}$ on the region $\mc{D}_{(R_1,R_2)}$. This gives
\begin{equation}\label{eq:vest2}
\|\psi_{\text{rem}}^{R_2}\|_{M,\Sigma_{R_1}} \leq \|\psi_{\text{rem}}^{R_2}\|_{M,\Sigma_{R_2}} + \frac{1}{\sqrt{2}}\int_{R_1}^{R_2}\|r\phi^{1/2}\square_g \psi_{\text{asymp}}\|_{L^2(\Sigma_r)}\de r.
\end{equation}
Since $\psi_{\text{rem}}^{R_2}$ has vanishing data on $\Sigma_{R_2}$ we have $\|\psi_{\text{rem}}^{R_2}\|_{M,\Sigma_{R_2}} = 0$, and so it follows from \eqref{eq:vest1} and \eqref{eq:vest2} that 
\begin{equation}\label{eq:vest3}
\|v\|_{M,\Sigma_{r}} \leq \frac{1}{\sqrt{2}}\int_{R_1}^{R_2}\|r\phi^{1/2}\square_g \psi_{\text{asymp}}\|_{L^2(\Sigma_r)}\de r
\end{equation}
for all $r\in [r_0,R_1]$. By Lemma~\ref{lem:asympsolboundl2}, the integral on the right hand side remains bounded as $R_2 \ra \infty$, and we have the bound
$$\|v\|_{M,\Sigma_{r}} \lesa_{\Lambda,m,r_0} \frac{1}{R_1}\|\tilde{\nabla}\psi_0\|_{H^3(\RR\times\sph^2)} + \frac{1}{R_1^2}\|\psi_3\|_{H^2(\RR\times\sph^2)}.$$
Thus for each $r \geq r_0$, $\|v\|_{M,\Sigma_{r}} \ra 0$ as $R_1$, $R_2 \ra 0$. Hence there exists a function $\psi$ such that 
$$\lim_{R \ra \infty}\|\psi - \psi_R\|_{M, \Sigma_r} = 0$$
for each $r \geq r_0$. 

We now derive the estimates on $\psi$, in particular we prove \eqref{eq:scatteringenergybound}. By the triangle inequality we have
$$\|\psi_{\text{rem}}\|_{M, \Sigma_r} \leq \|\psi_{\text{rem}} - \psi_{\text{rem}}^{R}\|_{M, \Sigma_r} + \|\psi_{\text{rem}}^{R}\|_{M, \Sigma_r}.$$
We apply the weighted energy estimate Proposition~\ref{prop:backwardenergyestimate} to $\psi_{\text{rem}}^{R}$ on $\mc{D}_{(r,R)}$, so that
$$\|\psi_{\text{rem}}\|_{M, \Sigma_r} \leq \|\psi_{\text{rem}} - \psi_{\text{rem}}^{R}\|_{M, \Sigma_r} +  + \frac{1}{\sqrt{2}}\int_{r}^{R}\|s\phi^{1/2}(s)\square_g \psi_{\text{asymp}}\|_{L^2(\Sigma_r)}\de s,$$
where we used the fact that $\psi_{\text{rem}}^{R}$ has vanishing data on $\Sigma_R$, so that $\|\psi_{\text{rem}}^{R}\|_{M, \Sigma_r}=0$. Again from Lemma~\ref{lem:asympsolboundl2}, the final integral is bounded as $R \ra \infty$, and we have the bound
\begin{equation}
\|\psi_{\text{rem}}\|_{M,\Sigma_{r}} \lesa_{\Lambda,m,r_0} \|\psi_{\text{rem}} - \psi_{\text{rem}}^{R}\|_{M, \Sigma_r} + \frac{1}{r}\|\tilde{\nabla} \psi_0\|_{H^3(\RR\times\sph^2)}+ \frac{1}{r^2}\|\psi_3\|_{H^2(\RR\times\sph^2)}.\\*
\end{equation}
Letting $R \ra \infty$, $\|\psi_{\text{rem}} - \psi_{\text{rem}}^{R}\|_{M, \Sigma_r} \ra 0$, and so it follows that for $\psi - \psi_{\text{asymp}} = \psi_{\text{rem}}$ we have.
\begin{align}
\|\psi - \psi_{\text{asymp}}\|_{\pa_r, \Sigma_r} &= \|\psi_{\text{rem}}\|_{\pa_r, \Sigma_r}\nonumber\\*
&\lesa_{\Lambda,m,r_0} \frac{1}{r^2}\|\psi_{\text{rem}}\|_{M, \Sigma_r}\nonumber\\*
&\lesa_{\Lambda,m,r_0} \frac{1}{r^3}\|\tilde{\nabla}\psi_0\|_{H^3(\RR\times\sph^2)}+ \frac{1}{r^4}\|\psi_3\|_{H^2(\RR\times\sph^2)}.\label{eq:scatteringenergydecay}
\end{align}
From this and \eqref{eq:asympsolenergybound} we also have for all $r \geq r_0$,
\begin{align*}\|\psi\|_{\pa_r, \Sigma_r} &\lesa_{\Lambda,m,r_0} r^2\|\psi_{\text{rem}}\|_{M, \Sigma_r} + \|\psi_{\text{asymp}}\|_{\pa_r, \Sigma_r}\\*
&\lesa_{\Lambda,m,r_0} \|\tilde\nabla \psi_0\|_{H^3(\RR\times\sph^2)} + \|\psi_3\|_{H^2(\RR\times\sph^2)},
\end{align*}
which is \eqref{eq:scatteringenergybound}. Uniqueness follows immediately, since if we set $\psi_0 = \psi_3 = 0$, then $\psi = 0$ by the above inequality.\qed
\begin{remark}
When defining the sequence of remainder functions $\psi_{\text{rem}}^R$, we point out that the inhomogeneous wave equation \eqref{RemEq} satisfied by $\psi_{\text{rem}}^R$ does not contain a cutoff function in $r$, such as 
$$\square_g \psi_{\text{rem}}^R = \chi(r/R) \square_g \psi_{\text{asymp}},$$
where $\chi$ is a smooth, compactly supported function such that $\chi(s) = 1$ in a neighbourhood of $0$, and $\chi(s) = 0$ in particular for all $s \geq 1$. Such a cutoff function is typically employed when constructing scattering solutions to wave equations on Minkowski in this manner, see \cite{LS23} for example. However we will use cutoffs (in a slightly different manner) when extending solutions to the cosmological horizon, c.f. \eqref{eq:initialdatav}.
\end{remark}

\subsection{Pointwise estimates}\label{sec:pointwisescattering}
Pointwise estimates can be obtained by repeatedly applying the Killing vectorfields $T,\Omega_i:i=1,2,3$ to the solution $\psi$ constructed in the previous section, and then using a Sobolev estimate adapted to the hypersurfaces $\Sigma_r$. The proof for this Sobolev estimate can be found in Appendix~\ref{sec:sobolev}. In particular we prove the pointwise decay estimate~
\eqref{eq:scatteringpointwisedecay} in Theorem~\ref{thm:scatteringwithouthorizon}. 

First we establish the decay of the $L^2$-norms of the remainder $\psi_{\text{rem}}$.
\begin{lemma}\label{lem:l2est}
Let $\psi$ be the scattering solution constructed in Theorem~\ref{thm:scatteringwithouthorizon} from scattering data $\psi_0$, $\psi_3$. Then the $L^2$-norm of the remainder $\psi_{\text{rem}}$ obeys the bound 
\begin{equation}\label{L2Decay}
\|\psi_{\text{rem}}\|_{L^2(\Sigma_r)} \lesa_{\Lambda,m} \frac{1}{r^{5/2}}\|\psi_0\|_{H^4(\RR\times\sph^2)}+\frac{1}{r^{7/2}}\|\psi_3\|_{H^2(\RR\times\sph^2)}
\end{equation}
for all large $r$. Moreover, $\psi \in L^2(\Sigma_r)$ for all $r \geq r_0$ for some $r_0$ slightly larger than $\rc$, and we have
\begin{equation}\label{L2Bound}
\|\psi\|_{L^2(\Sigma_r)} \lesa_{\Lambda,m,r_0} r^{3/2}\Big(\|\psi_0\|_{H^4(\RR\times\sph^2)}+\|\psi_3\|_{H^2(\RR\times\sph^2)}\Big).
\end{equation}
\end{lemma}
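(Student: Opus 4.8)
The plan is to bootstrap the $L^2$ bounds out of the energy decay estimate \eqref{eq:scatteringenergydecay} by integrating in the radial variable inwards from the future boundary. The essential difficulty is that the energy norm $\|\cdot\|_{\pa_r,\Sigma_r}$ controls $\pa_r\psi_{\text{rem}}$ and $\tilde\nabla\psi_{\text{rem}}$ but not $\psi_{\text{rem}}$ itself; to recover the undifferentiated $L^2$ norm I would exploit that $\psi_{\text{rem}}\ra 0$ towards $\Sigma^+$ together with the fundamental theorem of calculus in $r$.

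First I would convert \eqref{eq:scatteringenergydecay} into a bound on $\pa_r\psi_{\text{rem}}$ on the cylinder. Using the equivalence $E[\psi_{\text{rem}}](r)\sim r^4\|\pa_r\psi_{\text{rem}}(r)\|_{L^2(\RR\times\sph^2)}^2 + \|\tilde\nabla\psi_{\text{rem}}(r)\|_{L^2(\RR\times\sph^2)}^2$ recorded in Section~\ref{sec:notation}, the estimate \eqref{eq:scatteringenergydecay} yields
\[
\|\pa_r\psi_{\text{rem}}(r)\|_{L^2(\RR\times\sph^2)} \lesa_{\Lambda,m,r_0} \frac{1}{r^2}\|\psi_{\text{rem}}\|_{\pa_r,\Sigma_r} \lesa_{\Lambda,m,r_0} \frac{1}{r^5}\|\psi_0\|_{H^4(\RR\times\sph^2)} + \frac{1}{r^6}\|\psi_3\|_{H^2(\RR\times\sph^2)}.
\]
Writing $\psi_{\text{rem}}(r) = -\int_r^\infty \pa_{\ol r}\psi_{\text{rem}}(\ol r)\>\de\ol r$ and applying Minkowski's integral inequality in the $L^2(\RR\times\sph^2)$ norm, integration of the right-hand side above gives $\|\psi_{\text{rem}}(r)\|_{L^2(\RR\times\sph^2)}\lesa r^{-4}\|\psi_0\|_{H^4(\RR\times\sph^2)} + r^{-5}\|\psi_3\|_{H^2(\RR\times\sph^2)}$, and multiplying by the volume weight (recall $\|\cdot\|_{L^2(\Sigma_r)}\sim r^{3/2}\|\cdot\|_{L^2(\RR\times\sph^2)}$) produces exactly \eqref{L2Decay}.

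The step requiring care, and the main obstacle, is the justification of the representation $\psi_{\text{rem}}(r) = -\int_r^\infty \pa_{\ol r}\psi_{\text{rem}}\>\de\ol r$, i.e.\ the vanishing of the boundary term at $r=\infty$. Rather than prove $L^2$ convergence of $\psi_{\text{rem}}$ directly, I would run the argument on the approximating sequence $\psi_{\text{rem}}^R$ from the proof of Theorem~\ref{thm:scatteringwithouthorizon}, for which $\psi_{\text{rem}}^R|_{\Sigma_R}=0$ holds \emph{exactly}. For fixed $R$ the identity $\psi_{\text{rem}}^R(r) = -\int_r^R \pa_{\ol r}\psi_{\text{rem}}^R\>\de\ol r$ then carries no boundary contribution; moreover the bounds on $\|\psi_{\text{rem}}^R\|_{\pa_r,\Sigma_r}$ obtained by applying Proposition~\ref{prop:backwardenergyestimate} to $\psi_{\text{rem}}^R$ on $\mc{D}_{(r,R)}$ together with Lemma~\ref{lem:asympsolboundl2} are uniform in $R$, so the above integration gives a bound on $\|\psi_{\text{rem}}^R(r)\|_{L^2(\RR\times\sph^2)}$ that decays like $r^{-4}$ uniformly in $R$. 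Passing to the limit $R\ra\infty$, where $\psi_{\text{rem}}^R\ra\psi_{\text{rem}}$ in the energy norm, and using lower semicontinuity of the $L^2$ norm under this convergence, transfers the bound to $\psi_{\text{rem}}$; in particular this also shows $\psi_{\text{rem}}(r)\ra 0$ in $L^2$ as $r\ra\infty$, confirming the representation a posteriori.

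Finally, \eqref{L2Bound} follows by writing $\psi = \psi_{\text{asymp}} + \psi_{\text{rem}}$ and applying the triangle inequality. The remainder contribution is controlled by the (decaying) bound \eqref{L2Decay}, while for the asymptotic part the explicit form $\psi_{\text{asymp}} = \psi_0 - \frac{3}{2\Lambda}r^{-2}\tilde\Delta\psi_0 + r^{-3}\psi_3$ gives directly $\|\psi_{\text{asymp}}(r)\|_{L^2(\RR\times\sph^2)}\lesa \|\psi_0\|_{H^2(\RR\times\sph^2)} + r^{-3}\|\psi_3\|_{L^2(\RR\times\sph^2)}$ for $r\geq r_0$. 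Multiplying by $r^{3/2}$ and absorbing the lower-order terms yields the claimed growth $r^{3/2}\big(\|\psi_0\|_{H^4(\RR\times\sph^2)}+\|\psi_3\|_{H^2(\RR\times\sph^2)}\big)$, completing \eqref{L2Bound}.
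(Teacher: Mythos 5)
Your proposal is correct and follows essentially the same route as the paper: both arguments apply the fundamental theorem of calculus in $r$ to the approximating sequence $\psi_{\text{rem}}^R$ (whose exactly vanishing data on $\Sigma_R$ kills the boundary term), control $\pa_r\psi_{\text{rem}}^R$ via the weighted backward energy estimate of Proposition~\ref{prop:backwardenergyestimate} combined with Lemma~\ref{lem:asympsolboundl2}, and then pass to the limit $R\ra\infty$. Working in the fixed $L^2(\RR\times\sph^2)$ norm and restoring the $r^{3/2}$ volume weight at the end, as you do, is only a cosmetic repackaging of the paper's computation with the $L^2(\Sigma_r)$ norms.
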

\begin{proof}
We will give a sketch of the proof here as this is a simple modification of Theorem~\ref{thm:scatteringwithouthorizon}. First, we observe that \eqref{L2Bound} follows from the decay estimate \eqref{L2Decay}, and the fact that
$$\|\psi\|_{L^2(\Sigma_r)} \sim_{r_0} r^{3/2}\|\psi(r)\|_{L^2(\RR\times\sph^2)}$$
for all $r \geq r_0$. To prove \eqref{L2Decay}, we start with the straightforward bound
$$\|\varphi\|_{L^2(\Sigma_r)}\lesa r^{3/2}\Big(\frac{1}{R^{3/2}}\|\varphi\|_{L^2(\Sigma_R)} + \int_{r}^R \frac{1}{s^{3/2}}\|\pa_r \varphi\|_{L^2(\Sigma_s)}\de s\Big)$$
which is obtained via the fundamental theorem of calculus. We note that
$$\|\pa_r \varphi\|_{L^2(\Sigma_s)} \lesa \frac{1}{s^{5/2}}\|\varphi\|_{M,\Sigma_s}.$$
Through the same method of proof as that of Theorem~\ref{thm:scatteringwithouthorizon}, one can show that the sequence of functions $\psi_{\text{rem}}^R$ that solve $\square_g\psi_{\text{rem}}^R = -\square_g \psi_{\text{asymp}}$ with trivial data on $\Sigma_R$ have a limit as $R \ra \infty$. Moreover, the uniqueness statement from Theorem~\ref{thm:scatteringwithouthorizon} implies that this limiting function agrees with the limiting function found in Theorem~\ref{thm:scatteringwithouthorizon}. The bound \eqref{L2Decay} is obtained by estimating
\begin{align}
\|\psi_{\text{rem}}\|_{L^2(\Sigma_r)} &\leq \|\psi_{\text{rem}}^R\|_{L^2(\Sigma_r)} + \|\psi_{\text{rem}}-\psi_{\text{rem}}^R\|_{L^2(\Sigma_r)}\nonumber\\*
&\lesa \int_r^R\frac{1}{s^4}\|\psi_{\text{rem}}^R\|_{M,\Sigma_s}\>\de s + \|\psi_{\text{rem}}-\psi_{\text{rem}}^R\|_{L^2(\Sigma_r)}\label{L2Decayproof}
\end{align}
then employing the weighted energy estimate from Proposition~\ref{prop:backwardenergyestimate} to bound the integrand
$$\frac{1}{r^4}\|\psi_{\text{rem}}^R\|_{M,\Sigma_r}\lesa \frac{1}{r^5}\|\tilde{\nabla}\psi_0\|_{H^3(\RR\times\sph^2)}+\frac{1}{r^6}\|\psi_3\|_{H^2(\RR\times\sph^2)}.$$ 
Inserting this bound into \eqref{L2Decayproof} and taking the limit $R\ra \infty$ implies \eqref{L2Decay}.
\end{proof}

\begin{proof}[Proof of pointwise decay \texorpdfstring{\eqref{eq:scatteringpointwisedecay}}{equation 1.3}]
We will apply the Sobolev estimate from Appendix~\ref{sec:sobolev} to $\psi_{\text{rem}}$. Let $I=(I_1,I_2,I_3,I_4)$ be a multiindex, and let $Z^I$ denote the differential operator
$$Z^I = \Omega_1^{I_1}\Omega_2^{I_2}\Omega_3^{I_3}T^{I_4}.$$ By Proposition~\ref{SobEst}, we have
\begin{align}
\sup_{\Sigma_r}| \psi_{\text{rem}}|^2 \lesa_{\Lambda,m,r_0}& \frac{1}{r^3}\Big\{\| \psi_{\text{rem}}\|_{L^2(\Sigma_r)}^2 + \|(T \psi_{\text{rem}})\|_{L^2(\Sigma_r)}^2 + \sum_{i=1}^3\|\Omega_i \psi_{\text{rem}}\|_{L^2(\Sigma_r)}^2\nonumber\\*
&\quad+\sum_{i=1}^3\|\Omega_i T \psi_{\text{rem}}\|_{L^2(\Sigma_r)}^2 + \sum_{i,j=1}^3\|\Omega_i\Omega_j \psi_{\text{rem}}\|_{L^2(\Sigma_r)}^2\Big\}\nonumber\\*
\lesa_{\Lambda,m,r_0}& \frac{1}{r^3}\sum_{|I| \leq 2} \|Z^I \psi_{\text{rem}}\|_{L^2(\Sigma_r)}^2.\label{SobEstRem}
\end{align}
As the operators $Z^I$ are composed of Killing vector fields, they commute with the wave equation, and so $Z^I\psi_{\text{rem}}$ solves the equation
$$\square_g (Z^I\psi_{\text{rem}}) = -\square_g (Z^I\psi_{\text{asymp}}).$$
Moreover, the function $Z^I\psi_{\text{asymp}}$ remains a good asymptotic solution, since
$$Z^I \psi_{\text{asymp}} = Z^I \psi_0 + \frac{1}{r^2} Z^I\psi_2 + \frac{1}{r^3}Z^I\psi_3,$$
where
$$Z^I \psi_2 = -\frac{3}{2\Lambda}\tilde{\Delta}(Z^I\psi_0).$$
By Theorem~\ref{thm:scatteringwithouthorizon}, the solution $\varphi$ to the wave equation with scattering data $Z^I \psi_0$, $Z^I \psi_3$ is unique, and so the function $\psi_{\text{rem}} = \varphi - Z^I \psi_{\text{asymp}}$ obeys the $L^2$-based decay estimate \eqref{L2Decay} from Propostion \ref{lem:l2est}, i.e.
$$\|Z^I\psi_{\text{rem}}\|_{L^2(\Sigma_r)} \lesa \frac{1}{r^{5/2}}\|\tilde{\nabla}(Z^I\psi_0)\|_{H^3(\RR\times\sph^2)}+\frac{1}{r^{7/2}}\|Z^I\psi_3\|_{H^2(\RR\times\sph^2)}.$$
Substituting this into \eqref{SobEstRem}, the result now follows.
\end{proof}

\subsection{Nondegenerate energy estimates on the cosmological horizon}\label{sec:nondegcurrent}
We now prove that the scattering solutions from Theorem~\ref{thm:scatteringwithouthorizon} extend to the cosmological horizon, provided the scattering data $\psi_0,\psi_3$ decay exponentially quickly along $\Sigma^+$. 

We first prove a nondegenerate energy estimate in a neighbourhood of the cosmological horizons. Recall the redshift vectorfield $N$ from \cite{Sch15} that captures the local redshift effect on the cosmological horizons $\mc{C}^+$.
Here we slightly modify the construction of $N$ so that it is suitable for energy estimates backward-in-time. We begin by introducing on $\mc{C^+}$ the null frame $\{T,Y,E_A:A=1,2\}$ comprised of the Killing vector field $T$ introduced in Section~1.2, $Y$ which is null and conjugate to $T$, and an orthonormal frame $E_A:A=1,2$ that is Lie transported by $T$ along the cosmological horizons. See \eqref{eq:killingvectorfielddef}, \eqref{eq:yvectorfielddef} for formulae for $T,Y$ on $\mc{C}^+$. As in \cite{Sch15}, we extend $Y$ off $\mc{C}^+$ into the interior of the expanding region $\mc{R}^+$ like
\begin{equation}\label{eq:defyext}
\nabla_Y Y = -\sigma(Y+T).
\end{equation}
This construction, capturing the local redshift effect of Killing horizons, was initially carried out in \cite{DR09,DR13}, and was adapted to a neighbourhood to the future of the cosmological horizons in \cite{Sch15}. In our construction we choose the constant $\sigma$ to be negative.
\begin{proposition}\label{prop:ncurrent}
For $r_0 > \rc$, let $\mc{D}_{(\rc,r_0)}$ denote the spacetime region $\{(u,v): \rc \leq r(u,v) \leq r_0\}$.
Let $\sigma < 0$ be some sufficiently negative constant, and let $Y$ denoted the vectorfield define in \eqref{eq:yvectorfielddef} and \eqref{eq:defyext}. The vectorfield multiplier $N = Y+T$ is strictly timelike on $\mc{D}_{(\rc,r_0)}$, and has the property that there exists some constant $c = c(\Lambda,m) > 0$ such that on $\mc{D}_{(\rc,r_0)}$ we have
\begin{equation}\label{eq:ncurrentbound}
K^N[\psi] \leq \frac{c}{u}J^N\cdot \frac{\pa}{\pa v}.
\end{equation}
\begin{proof}
From \cite{DR13} and \cite{Sch15}, we have
\begin{equation}\label{eq:ycurrenthorizon}
K^Y[\psi]\Big|_{\mc{C}^+} = -\frac{1}{2} |\sigma| (T\psi)^2 + \frac{1}{2}\kappa_{\mc{C}}(Y\psi)^2 - \frac{1}{2}|\sigma||\slashed\nabla \psi|^2 + \frac{2}{\rc}(T\psi)(Y\psi).
\end{equation}
By the Cauchy-Schwartz inequality, we may bound
$$\frac{2}{r}(T\psi)(Y\psi) \leq \frac{1}{2}|\sigma|(T\psi)^2 + \frac{2}{\rc^2|\sigma|}(Y\psi)^2,$$
and so by \eqref{eq:ycurrenthorizon}, we have
\begin{equation}\label{eq:ycurrenthorizon2}
K^Y[\psi]\Big|_{\mc{C}^+} \leq \Big(\frac{1}{2}\kappa_{\mc{C}}+\frac{2}{\rc^2|\sigma|}\Big)(Y\psi)^2.
\end{equation}
A short computation reveals that the energy current $J^N\cdot \pa_v$ restricted to the cosmological horizons is
$$J^N[\psi]\cdot \frac{\pa}{\pa v}\Big|_{\mc{C}^+} = \frac{u}{\iota_{\mc{C}}}\Big((Y\psi)^2+\kappa_{\mc{C}}^2|\slashed\nabla\psi|^2\Big).$$
Since $T$ is Killing everywhere, $K^N = K^Y + K^T = K^Y$, and so by \eqref{eq:ycurrenthorizon2},
$$K^N[\psi]\Big|_{\mc{C}^+} \leq \Big(\frac{1}{2}\kappa_{\mc{C}} + \frac{2}{\rc^2\sigma}\Big)\frac{\iota_\mc{C}}{u}J^N \cdot \frac{\pa}{\pa v}\Big|_{\mc{C}^+}.$$
By continuity this inequality can be extended to a neighbourhood of $\mc{C}^+$, and so we may choose some $r_0$ sufficiently close to $\rc$, and some $\sigma$ sufficiently negative such that \eqref{eq:ncurrentbound} holds on $\mc{D}_{(\rc,r_0)}$.
\end{proof}
\end{proposition}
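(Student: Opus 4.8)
The plan is to treat $N = Y+T$ as a redshift-type vectorfield multiplier and to exploit the fact that $T$ is Killing in order to reduce everything to a computation involving only $Y$. Since the deformation tensor of a Killing field vanishes, the current defined in \eqref{eq:seconcurrentdef} satisfies $K^T[\psi]=0$, and therefore $K^N[\psi] = K^Y[\psi]$ identically. Thus the whole problem reduces to understanding the sign structure of $K^Y[\psi]$ near the cosmological horizon, together with the coercivity of the flux $J^N[\psi]\cdot\frac{\pa}{\pa v}$.

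First I would record the timelike character of $N$. On $\mc{C}^+$ both $T$ and $Y$ are null, and by construction $g(T,Y)=-2$ (see the discussion following \eqref{eq:yvectorfielddef}), so $g(N,N)|_{\mc{C}^+} = g(Y,Y) + 2g(T,Y) + g(T,T) = -4 < 0$; by continuity $N$ remains strictly timelike on a neighbourhood $\mc{D}_{(\rc,r_0)}$ for $r_0$ sufficiently close to $\rc$. Next I would evaluate $K^Y[\psi]$ on $\mc{C}^+$ itself. Here the extension condition \eqref{eq:defyext}, namely $\nabla_Y Y = -\sigma(Y+T)$, together with the transport relations for the null frame $\{T,Y,E_A\}$, feeds directly into the deformation tensor of $Y$; this is exactly the redshift computation carried out in \cite{DR13,Sch15}, which I would cite rather than reproduce. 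It yields the horizon expression $K^Y|_{\mc{C}^+} = -\tfrac12|\sigma|(T\psi)^2 + \tfrac12\kappa_{\mc{C}}(Y\psi)^2 - \tfrac12|\sigma||\slashed\nabla\psi|^2 + \tfrac{2}{\rc}(T\psi)(Y\psi)$. The crucial observation is that, because we have chosen $\sigma<0$, the $(T\psi)^2$ and $|\slashed\nabla\psi|^2$ terms carry a favourable nonpositive coefficient, so the only obstructions are the transversal term $\tfrac12\kappa_{\mc{C}}(Y\psi)^2$ and the indefinite cross term.

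The cross term I would dispatch by Cauchy--Schwarz, splitting it as $\tfrac{2}{\rc}(T\psi)(Y\psi) \leq \tfrac12|\sigma|(T\psi)^2 + \tfrac{2}{\rc^2|\sigma|}(Y\psi)^2$. This is tuned precisely so that the $(T\psi)^2$ produced exactly cancels the good $-\tfrac12|\sigma|(T\psi)^2$ already present, leaving $K^Y|_{\mc{C}^+}\leq \big(\tfrac12\kappa_{\mc{C}} + \tfrac{2}{\rc^2|\sigma|}\big)(Y\psi)^2$ after discarding the nonpositive angular term. In parallel I would compute the flux $J^N[\psi]\cdot\frac{\pa}{\pa v}$ restricted to $\mc{C}^+$ directly from the null frame, obtaining $\frac{u}{\iota_{\mc{C}}}\big((Y\psi)^2 + \kappa_{\mc{C}}^2|\slashed\nabla\psi|^2\big)$, which is manifestly coercive in $(Y\psi)^2$ and carries the factor $u$. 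Comparing the two expressions gives $K^N|_{\mc{C}^+} \leq \frac{c}{u}\, J^N\cdot\frac{\pa}{\pa v}$ with $c = \iota_{\mc{C}}\big(\tfrac12\kappa_{\mc{C}} + \tfrac{2}{\rc^2|\sigma|}\big) > 0$ depending only on $\Lambda,m$. Finally, since the inequality is a pointwise algebraic bound between continuous quantities that is strict in the relevant sense on $\mc{C}^+$, it persists on all of $\mc{D}_{(\rc,r_0)}$ after shrinking $r_0$.

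The main obstacle is the horizon evaluation of $K^Y$: assembling the deformation tensor of the extended field $Y$ from \eqref{eq:defyext} and the transport of the frame $E_A$, and isolating which terms survive on $\mc{C}^+$. Since this is precisely the redshift calculation of \cite{DR13,Sch15}, I would import it; the only genuinely new bookkeeping is verifying that the sign choice $\sigma<0$ renders the $(T\psi)^2$ and angular contributions harmless, that the Cauchy--Schwarz split balances exactly against the available $(T\psi)^2$ reservoir, and that the surviving $(Y\psi)^2$ coefficient is controlled by the flux with the correct $u^{-1}$ weight. It is this weighted structure that later turns the blueshift into a quantitative, Gr\"onwall-able growth bound backward in time.
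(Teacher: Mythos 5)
Your proposal is correct and follows essentially the same route as the paper: reduce $K^N$ to $K^Y$ via the Killing property of $T$, import the horizon expression for $K^Y$ from the redshift computation of \cite{DR13,Sch15}, absorb the cross term by the identical Cauchy--Schwarz split against the $-\tfrac12|\sigma|(T\psi)^2$ reservoir, compare with the flux $J^N\cdot\pa_v|_{\mc{C}^+}=\frac{u}{\iota_{\mc{C}}}\big((Y\psi)^2+\kappa_{\mc{C}}^2|\slashed\nabla\psi|^2\big)$, and conclude by continuity. The explicit verification that $g(N,N)|_{\mc{C}^+}=-4$ is a small addition the paper leaves implicit, but otherwise the arguments coincide.
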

We now introduce some notation that will be important for the following proposition.
Let $\tau$ be a function on $\mc{C}^+$ that is constant on the spheres of symmetry $S \subset \mc{C}^+$, and
$$T \cdot \tau\Big|_{\mc{C}^+} = 1,$$
and set $\tau = 0$ on the sphere for which $u = 1$. It follows from \eqref{eq:killingvectorfielddef} that \begin{equation}\label{eq:dtaudu}
\frac{\de \tau}{\de u}\Big|_{\mc{C}^+} = \frac{1}{\kappa_{\mc{C}}u}.
\end{equation}
For given $\tau_1,\tau_2 \in \RR$, let $\mc{C}_{\tau_1}^+$ be the segment of $\mc{C}^+$ where $\tau \geq \tau_1$, and let $C_{\tau_1}$ denote the null hypersurface which lies transverse to the cosmological horizons and intersects $\mc{C}^+$ at the sphere on which $\tau = \tau_1$. For $r_0 > \rc$, let $\Sigma_{r_0}^{(\tau_1)} = \Sigma_{r_0} \cap J^+(\mc{C}_{\tau_1}^+)$. Moreover, define the compact hypersurfaces
\begin{align*}
&\mc{C}_{\tau_1,\tau_2}^+ = \mc{C}_{\tau_1}^+ \bs \mc{C}_{\tau_2}^+.\\*
&C_{\tau_1}^c = C_{\tau_1} \cap J^-(\Sigma_{r_0}).\\*
&\Sigma_{r_0}^{(\tau_1,\tau_2)} = \Sigma_{r_0}^{(\tau_1)} \bs \Sigma_{r_0}^{(\tau_2)}.
\end{align*}
We note here that for any $\tau_1,\tau_2 \in \RR$ with $\tau_1<\tau_2$, and $r_0 > \rc$, the sets $\mc{C}_{\tau_1,\tau_2}^+$, $C_{\tau_1}^c$, $C_{\tau_2}^c$, and $\Sigma_{r_0}^{(\tau_1,\tau_2)}$ enclose a compact region of $\mc{R}^+$, which we denote by $\mc{R}_{\tau_1,\tau_2}^{(r_0)}$; see Figure \ref{fig:energydomainhorizon}.

We will now prove a nondegenerate energy estimate at the cosmological horizons. We refer the reader to Proposition 4.4 in \cite{Sch15} for the corresponding energy estimate on the cosmological horizons proved for the forward problem.
\begin{figure}[t]
\centering
\begin{tikzpicture}[scale=2.5]
\draw (0,3) -- (2.5,0.5);
\draw[dashed] (0,3) .. controls (2.5,2.5)  .. (3,2.5);
\draw (1.3,1.5) node[below]{$\mc{C}_{\tau_1,\tau_2}^+$};
\draw (0,3) .. controls (1.5,2) .. (3,1.5);
\draw (2.6,1.2) node[below]{$C_{\tau_1}^c$};
\draw (1.15,1.95) node[above]{$C_{\tau_2}^c$};
\filldraw[fill=gray!20] (1.41,2.11) .. controls (1.8,1.9) and (2.4,1.7) .. (2.85,1.55) -- (2.15,0.85) -- (1.15,1.85) -- cycle;
\filldraw  (2.15,0.85) circle (0.025cm);
\filldraw  (2.85,1.55) circle (0.025cm);
\filldraw  (1.15,1.85) circle (0.025cm);
\filldraw  (1.42,2.12) circle (0.025cm);
\draw (1.9,1.7) node[below]{$\mc{R}_{\tau_1,\tau_2}^{(r_0)}$};
\draw (2.1,1.9) node[above]{$\Sigma_{r_0}^c$};
\filldraw[fill=white] (0,3) circle (0.035cm);
\end{tikzpicture}
\caption{Spacetime domain $\mc{R}_{\tau_1,\tau_2}^{(r_0)}$ of the nondegenerate energy estimate along $\mc{C}^+$.}\label{fig:energydomainhorizon}
\end{figure}
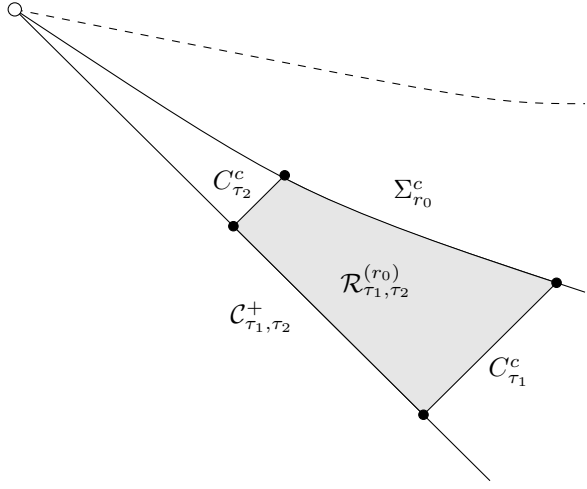
\begin{proposition}\label{prop:backwardesthorizon}
Let $\psi$ be a solution to \eqref{eq:waveequation} with finite energy on $\Sigma_{r_0}$ for some $r_0$ sufficiently close to $\rc$. Set
$$f(\tau) = \int_{\mc{C}_\tau'}\>^*J^N[\psi].$$
The following energy estimate holds for all solutions to $\square_g = 0$ on $\mc{R}^+\cup\mc{C}^+$:
\begin{align*}
f(\tau_1) +\int_{\mc{C}_{\tau_1,\tau_2}^+} \>^*J^N[\psi] \leq \>&e^{c(\tau_2 - \tau_1)}f(\tau_2) + \int_{\Sigma_{r_0}^{(\tau_1,\tau_2)}}J^N[\psi]\cdot n_{\Sigma_r}\de \mu_{\ol{g}_r}\\*
&+ c\int_{\tau_1}^{\tau_2}e^{c(\tau-\tau_1)}\Big(\int_{\Sigma_{r_0}^{(\tau,\tau_2)}}J^N[\psi]\cdot n_{\Sigma_r}\de \mu_{\ol{g}_r}\Big)\de \tau
\end{align*}
for all $\tau_1 < \tau_2$, where $c >0$ is any constant such that $c > \kappa_{\mc{C}}$.
\end{proposition}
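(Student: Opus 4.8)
The plan is to run a backward Grönwall argument over the nested family of regions $\mc{R}_{\tau,\tau_2}^{(r_0)}$, $\tau\in[\tau_1,\tau_2]$, driven by the energy identity for the multiplier $N=Y+T$ and the one-sided bulk bound of Proposition~\ref{prop:ncurrent}. First I would apply the divergence theorem to $J^N[\psi]$ on $\mc{R}_{\tau,\tau_2}^{(r_0)}$, whose boundary consists of the horizon segment $\mc{C}_{\tau,\tau_2}^+$, the two transverse null hypersurfaces $C_\tau^c$ and $C_{\tau_2}^c$, and the spacelike piece $\Sigma_{r_0}^{(\tau,\tau_2)}$. Since $N$ is strictly timelike and future-directed on $\mc{D}_{(\rc,r_0)}$, every null and spacelike flux of $J^N[\psi]$ is nonnegative, and using $\square_g\psi=0$ together with \eqref{eq:currentdivergence} (so that $\nabla\cdot J^N[\psi]=K^N[\psi]$) the flux identity becomes
\begin{multline*}
f(\tau)+\int_{\mc{C}_{\tau,\tau_2}^+}\>^*J^N[\psi]\\
=f(\tau_2)+\int_{\Sigma_{r_0}^{(\tau,\tau_2)}}J^N[\psi]\cdot n_{\Sigma_r}\,\de\mu_{\ol{g}_r}+\int_{\mc{R}_{\tau,\tau_2}^{(r_0)}}K^N[\psi]\,\de\mu_g,
\end{multline*}
where $f(\tau)$ denotes, as in the statement, the flux through the transverse null hypersurface at parameter $\tau$, and $f(\tau_2)$ that through $C_{\tau_2}^c$ (this identification is forced, since $C_{\tau_2}^c$ is the only remaining boundary piece). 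Denote the left-hand side by $G(\tau)$; this is precisely the quantity to be bounded at $\tau=\tau_1$.

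Second I would estimate the bulk term, which is where the blueshift enters. Choosing $r_0$ close enough to $\rc$ that $\mc{R}_{\tau_1,\tau_2}^{(r_0)}\subset\mc{D}_{(\rc,r_0)}$ guarantees the pointwise bound $K^N[\psi]\leq\frac{c}{u}J^N[\psi]\cdot\frac{\pa}{\pa v}$ of Proposition~\ref{prop:ncurrent} holds throughout the region. Foliating $\mc{R}_{\tau,\tau_2}^{(r_0)}$ by the transverse null hypersurfaces $\{C_s^c\}_{s\in[\tau,\tau_2]}$ and applying the coarea formula, the weight $u^{-1}$ combines with the reparametrisation \eqref{eq:dtaudu} to convert the $u^{-1}$-weighted $\frac{\pa}{\pa v}$-flux into a constant rate in $\tau$. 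Since each transverse null flux is nonnegative and bounded by $G(s)$ (the horizon flux being nonnegative), this yields a bound of the shape
\begin{equation*}
\int_{\mc{R}_{\tau,\tau_2}^{(r_0)}}K^N[\psi]\,\de\mu_g\leq c\int_\tau^{\tau_2}G(s)\,\de s,
\end{equation*}
where, after taking $\sigma$ in Proposition~\ref{prop:ncurrent} sufficiently negative, $c$ may be taken to be any constant exceeding $\kappa_{\mc{C}}$.

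Combining the two steps gives the backward integral inequality $G(\tau)\leq A(\tau)+c\int_\tau^{\tau_2}G(s)\,\de s$, with $A(\tau)=f(\tau_2)+\int_{\Sigma_{r_0}^{(\tau,\tau_2)}}J^N[\psi]\cdot n_{\Sigma_r}\,\de\mu_{\ol{g}_r}$ the prescribed future data, nondecreasing as $\tau$ decreases. Writing $I(\tau)=\int_\tau^{\tau_2}G(s)\,\de s$, so that $I'(\tau)=-G(\tau)$, the inequality reads $I'(\tau)+cI(\tau)\geq -A(\tau)$; multiplying by the integrating factor $e^{c\tau}$, integrating over $[\tau_1,\tau_2]$ and using $I(\tau_2)=0$ gives $I(\tau_1)\leq\int_{\tau_1}^{\tau_2}A(s)e^{c(s-\tau_1)}\,\de s$. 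Substituting into $G(\tau_1)\leq A(\tau_1)+cI(\tau_1)$ and collecting the $f(\tau_2)$ contributions, which assemble into the factor $e^{c(\tau_2-\tau_1)}$, reproduces exactly the claimed estimate.

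The main obstacle is the bulk estimate: because $\mc{C}^+$ is a nondegenerate Killing horizon, the blueshift deprives $K^N[\psi]$ of a favourable sign for the backward problem, so unlike the forward redshift estimate of \cite{Sch15} the bulk cannot be discarded and must instead be reabsorbed through Grönwall. Getting the exponential rate to be precisely any $c>\kappa_{\mc{C}}$ hinges on tracking the geometric measure factors in the coarea formula, so that the $u^{-1}$ weight of Proposition~\ref{prop:ncurrent} and the reparametrisation \eqref{eq:dtaudu} cancel cleanly into a constant rate in $\tau$. A secondary point to verify is that each null flux through $C_\tau^c$ and $C_{\tau_2}^c$ carries the correct nonnegative sign, so that the flux identity can be used as the one-sided inequality that drives the Grönwall argument.
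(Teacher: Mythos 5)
Your proposal is correct and follows essentially the same route as the paper: divergence theorem for $J^N[\psi]$ on $\mc{R}_{\tau,\tau_2}^{(r_0)}$, the bulk bound from Proposition~\ref{prop:ncurrent} converted via \eqref{eq:dtaudu} and the coarea formula into a constant rate in $\tau$, and a backward Gr\"onwall inequality. The only cosmetic difference is that you run the Gr\"onwall argument on the full left-hand side $G(\tau)$ rather than on the transverse flux $f(\tau)$ alone as the paper does; since $0\leq f\leq G$ this is an equally valid (marginally less sharp at the intermediate step, but identical in the final constants) bookkeeping choice.
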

\begin{proof}
Applying the divergence theorem to the energy current $J^N[\psi]$ on $\mc{R}_{\tau_1,\tau_2}^{(r_0)}$, we get
\begin{multline}\label{eq:divergencehorizon}
\int_{C_{\tau_2}^c} \>^*J^N[\psi] + \int_{\Sigma_{r_0}^{(\tau_1,\tau_2)}}J^N[\psi] \cdot n_{\Sigma_r} \de \mu_{\ol{g}_r} + \int_{\mc{R}_{\tau_1,\tau_2}^{(r_0)}}K^N[\psi] \>\de \mu_g\\*
=\int_{C_{\tau_1}^c} \>^*J^N[\psi] + \int_{\mc{C}_{\tau_1,\tau_2}^+} \>^*J^N[\psi].
\end{multline}
In the Kruskal coordinates defined in Section~1.2, we have
$$\de \mu_{g} = \frac{1}{2}\Omega^2 r^2 \de u \wedge \de v \wedge \de \mu_{\mr{\gamma}},$$
From this and Proposition~\ref{prop:ncurrent}, it follows that there exists some constant $c > 0$ that depends on $\kappa_{\mc{C}}$ such that
$$\int_{\mc{R}_{\tau_1,\tau_2}^{(r_0)}}K^N[\psi] \>\de \mu_g \leq c\int_{\tau_1}^{\tau_2}\Big(\int\Big( J^N\cdot \frac{\pa}{\pa v}\Big) \de v \de \mu_{\mr{\gamma}}\Big)\de \tau,$$
where we used the relation \eqref{eq:dtaudu} between $u$ and $\tau$. On the other hand, we can write the integral of $\>^*J^N$ on $C_\tau^c$ as
$$\int_{C_{\tau}^c}\>^*J^N = \int\Big(J^N\cdot \frac{\pa}{\pa v}\Big) r^2 \de v \de \mu_{\mr{\gamma}},$$
Therefore we may write
$$\int_{\mc{R}_{\tau_1,\tau_2}^{(r_0)}} K^N[\psi] \>\de \mu_{g} \leq c \int_{\tau_1}^{\tau_2} \Big( \int_{C_\tau^c}\>^*J^N[\psi]\Big)\de \tau.$$
It follows from \eqref{eq:divergencehorizon} that
\begin{equation}\label{eq:divergencehorizon2}
f(\tau_1) + \int_{\mc{C}_{\tau_1,\tau_2}^+} \>^*J^N[\psi] \leq f(\tau_2) + c\int_{\tau_1}^{\tau_2}f(\tau)\de\tau + \int_{\Sigma_{r_0}^{(\tau_1,\tau_2)}}J^N[\psi] \cdot n_{\Sigma_r} \de \mu_{\ol{g}_r}.
\end{equation}
This is a standard Gr\"onwall-type inequality, from which the result follows by integrating over $[\tau_1,\tau_2]$ the inequality
$$-\pa_\tau\Big[e^{c\tau}F(\tau,\tau_2)\Big] = e^{c\tau}\Big[f(\tau) - cF(\tau,\tau_2)\Big] \leq e^{c\tau}\Big[f(\tau_2) + \int_{\Sigma_{r_0}^{(\tau,\tau_2)}}J^N[\psi]\cdot n_{\Sigma_r}\de\mu_{\ol{g}_r}\Big],$$
and applying the inequality \eqref{eq:divergencehorizon2} again.
\end{proof}
\begin{remark}
While Proposition~\ref{prop:backwardesthorizon} was only proven on one segment of the cosmological horizons $\mc{C}^+$, one can easily derive the analogous estimate for $\ol{\mc{C}}^+$. The expanding region and horizons $\mc{R}^+\cup\mc{C}^+\cup\ol{\mc{C}}^+$ are symmetric with respect to the null coordinates $(u,v)$, and so precisely the same argument holds by applying the isometry $(u,v) \mapsto (v,u)$ to $\mc{R}^+\cup\mc{C}^+\cup\ol{\mc{C}}^+$.
\end{remark}

\subsection{Existence of scattering solutions to the wave equation up to the cosmological horizons: proof of Theorem~\ref{thm:scatteringhorizon}}\label{sec:scatteringhorizon} 
We first prove that for the scattering solutions constructed in Theorem~\ref{thm:scatteringwithouthorizon}, exponential decay of the scattering data $\psi_0,\psi_3$ is propagated to exponential decay of the solution $\psi$ along each $\Sigma_r$ hypersurface. Let $\Sigma_r^\tau$ denote the subset of $\Sigma_r$ that lies in the future domain of dependence of $\mc{C}_\tau^+$, and let $\ol{\Sigma}_r^\tau$ denote the analagous subset of $\Sigma_r$ that lies in the future domain of dependence of $\ol{\mc{C}}_\tau^+$. These should be thought of as the ``ends'' of the $\Sigma_r$, which we recall are diffeomorphic to the standard cylinder $\RR\times\sph^2$.
\begin{proposition}\label{prop:localexpdecay}
Let $\psi_0 \in H^4(\RR\times\sph^2)$ ,$\psi_3\in H^2(\RR\times\sph^2)$ be scattering data, and let $\psi$ be the corresponding finite-energy solution on the expanding region $\mc{R}^+$. In addition, assume that $\psi_0$, $\psi_3$ decay exponentially, in the sense that as $\tau \ra \infty$ we have
$$\|\tilde{\nabla} \psi_0\|_{H^3((-\tau,\tau)^c\times \sph^2)},\|\psi_3\|_{H^2((-\tau,\tau)^c\times \sph^2)} \lesa_{\Lambda,m} e^{-\beta \tau}$$
for some constant $\beta > 0$. Then for each $r > \rc$, the energy flux of $\psi$ through $\Sigma_r$ decays exponentially toward $\iota$ and $\ol{\iota}$, so that
\begin{equation}\label{eq:localexpdecayenergy} \|\psi\|_{M,\Sigma_r^\tau},\|\psi\|_{M,\ol{\Sigma}_r^\tau} \lesa_{\Lambda,m,r} e^{-\beta \tau}.
\end{equation}
The $L^2$-norm of $\psi$ also decays exponentially,
\begin{equation}\label{eq:localexpdecayL2}
\|\psi\|_{L^2(\Sigma_r^\tau)},\|\psi\|_{L^2(\ol{\Sigma}_r^\tau)} \lesa_{\Lambda,m,r} e^{-\beta \tau}
\end{equation}
\end{proposition}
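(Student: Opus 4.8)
The plan is to localise the construction of Section~\ref{sec:scatteringinterior} to the ``ends'' of the hypersurfaces, combining finite speed of propagation with the localised form of the backward energy estimate. First I would make the causal geometry explicit in the Kruskal coordinates of Section~\ref{sec:notation}. The end $\Sigma_r^\tau$ is, up to the precise conventions, the set $\Sigma_r \cap \{u \geq u_\tau\}$, where $u_\tau$ is the value of $u$ on the sphere $\{\tau\}\subset\mc{C}^+$; by \eqref{eq:dtaudu} one has $u_\tau \sim e^{\kappa_{\mc{C}}\tau}$, and the lateral boundary of this region is the null hypersurface $C_\tau = \{u = u_\tau\}$. Since future-directed causal curves increase both $u$ and $v$, a short check shows that the slab $\{u \geq u_\tau\}\cap\{r \leq r' \leq \infty\}$ is precisely the causal future of $\Sigma_r^\tau$ inside $\{r' \geq r\}$, bounded by $\Sigma_r^\tau$, the null surface $C_\tau$, and a portion of $\Sigma^+$; in particular this region is a domain of dependence. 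Inverting the Kruskal relations, I would verify that on each $\Sigma_s$ with $s \in [r,\infty)$ the set $\{u \geq u_\tau\}$ corresponds, under $\Sigma_s \cong \RR\times\sph^2$, to a half-cylinder of the form $\{t \leq -\tau + O_{\Lambda,m,r}(1)\}$, the bounded error arising because $uv$ ranges in a compact subset of $(0,1)$ along $\Sigma_s$ for $s \geq r$. Consequently the restrictions of $\psi_0,\psi_3$ to this set are controlled by their norms over $(-\tau',\tau')^c\times\sph^2$ with $\tau' = \tau - O_r(1)$, hence are $\lesa_{\Lambda,m,r} e^{-\beta\tau}$ by the decay hypothesis.

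Second, I would rerun the existence argument behind Theorem~\ref{thm:scatteringwithouthorizon} with every energy restricted to the end. The localised estimate \eqref{eq:backwardestimatelocal}, specialised to the caps $\ol{u} = u_\tau$ (kept fixed) and $\ol{v} \ra 0$, applies to the remainders $\psi_{\text{rem}}^{R}$; the null flux through $C_\tau$ appears with the favourable sign and is discarded exactly as in the proof of Proposition~\ref{prop:backwardenergyestimate}, while the compact intermediate domains $\{u \geq u_\tau\}\cap\{v \geq \ol{v}\}$ justify the divergence theorem before passing to the limit $\ol{v}\ra 0$. Since $\psi_{\text{rem}}^{R}$ has trivial data on $\Sigma_R$, sending $R \ra \infty$ as in Section~\ref{sec:scatteringinterior} yields
$$\|\psi_{\text{rem}}\|_{M,\Sigma_r^\tau} \lesa \int_r^\infty \big\|s\phi^{1/2}\square_g\psi_{\text{asymp}}\big\|_{L^2(\Sigma_s^\tau)}\,\de s.$$
By the pointwise bound \eqref{eq:boxpsiasymp} and the computation of Lemma~\ref{lem:asympsolboundl2}, the integrand is bounded by $s^{-2}\|\psi_0\|_{H^4} + s^{-3}\|\psi_3\|_{H^2}$ with the norms taken over the half-cylinder identified in Step~1, and these are $\lesa e^{-\beta\tau}$; integrating the convergent factor in $s$ over $[r,\infty)$ gives $\|\psi_{\text{rem}}\|_{M,\Sigma_r^\tau} \lesa_{\Lambda,m,r} e^{-\beta\tau}$. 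Estimating $\|\psi_{\text{asymp}}\|_{M,\Sigma_r^\tau}$ directly from its explicit form $\psi_{\text{asymp}} = \psi_0 - \tfrac{3}{2\Lambda}r^{-2}\tilde{\Delta}\psi_0 + r^{-3}\psi_3$ and the restricted-data decay, then adding, establishes \eqref{eq:localexpdecayenergy}; the bound on $\ol{\Sigma}_r^\tau$ follows from the isometry $(u,v)\mapsto(v,u)$.

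Finally, for the $L^2$ decay \eqref{eq:localexpdecayL2} I would repeat the fundamental-theorem-of-calculus argument of Lemma~\ref{lem:l2est} restricted to $\{u \geq u_\tau\}$: writing $\|\psi_{\text{rem}}\|_{L^2(\Sigma_r^\tau)}$ as a radial integral of $\|\pa_r\psi_{\text{rem}}\|_{L^2(\Sigma_s^\tau)} \lesa s^{-5/2}\|\psi_{\text{rem}}\|_{M,\Sigma_s^\tau}$, inserting the exponential decay just obtained, and using that the boundary contribution at $\Sigma_R$ vanishes as $R\ra\infty$. Adding the explicit $L^2$ contribution of $\psi_{\text{asymp}}$ over the end, whose norm is again $\lesa_{\Lambda,m,r} e^{-\beta\tau}$, yields the claim.

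I expect the main obstacle to be Step~1 together with the sign of the $C_\tau$ flux: one must confirm that $\{u \geq u_\tau\}$ is a genuine domain of dependence whose null boundary contributes with the sign allowing it to be dropped, and simultaneously track the Kruskal-to-$(t,r)$ change of variables \emph{uniformly} in $s \in [r,\infty)$, so that $\{u \geq u_\tau\}\cap\Sigma_s$ always lies inside a half-cylinder $\{t \leq -\tau + O_r(1)\}$ on which the exponential decay of the data may be invoked. Everything else is a localised repetition of the estimates already proved in Sections~\ref{sec:asympsol}--\ref{sec:pointwisescattering}.
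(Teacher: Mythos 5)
Your proposal is correct and follows essentially the same route as the paper's proof: split $\psi = \psi_{\text{asymp}} + \psi_{\text{rem}}$, apply the localised backward energy estimate \eqref{eq:backwardestimatelocal} with $\ol{u}$ held fixed and $\ol{v}\ra 0$ so that the flux through the transversal null cap can be discarded, and bound both the asymptotic and remainder contributions by the data norms restricted to the end, which decay exponentially by hypothesis. The Kruskal-coordinate bookkeeping you single out as the main obstacle is precisely what the paper leaves implicit in its appeal to the localised versions of Lemma~\ref{lem:asympsolboundl2} and Lemma~\ref{lem:l2est}, and your resolution of it is correct.
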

\begin{proof}
This follows directly from the localised version of the weighted energy estimate from Proposition~\ref{prop:backwardenergyestimate}, namely \eqref{eq:backwardestimatelocal}. We will only prove the estimate for the energy flux of $\psi$ through $\Sigma_r^\tau$, the proof for $\ol{\Sigma}_r^\tau$ is the same. We estimate
\begin{align}
\|\psi\|_{M,\Sigma_r^\tau} &\leq \|\psi_{\text{asymp}}\|_{M,\Sigma_r^\tau} + \|\psi_{\text{rem}}\|_{M,\Sigma_r^\tau}\nonumber\\*
&\leq \|\psi_{\text{asymp}}\|_{M,\Sigma_r^\tau} +
\|\psi_{\text{rem}} - \psi_{\text{rem}}^R\|_{M,\Sigma_r^\tau}+\|\psi_{\text{rem}}^R\|_{M,\Sigma_r^\tau}\label{eq:localexpdecay1}
\end{align}
As a consequence of the localised energy estimate \eqref{eq:backwardestimatelocal}, we can bound the last term like
$$\|\psi_{\text{rem}}^R\|_{M,\Sigma_r^\tau} \leq \int_r^R \|r\phi^{1/2}\square_g \psi_{\text{asymp}}\|_{L^2(\Sigma_r^\tau)}\de r.$$
This follows from taking the limit $\ol{v} \ra 0$, while still keeping $\ol{u}$ fixed in \eqref{eq:backwardestimatelocal}. From Lemma~\ref{lem:asympsolboundl2} we bound the energy and $L^2$-norms of the asymptotic solution
\begin{gather*}\|\psi_{\text{asymp}}\|_{M,\Sigma_r^\tau} \lesa_{\Lambda,m,r} \|\tilde{\nabla} \psi_0\|_{H^2((\tau,\infty)\times\sph^2)} + \|\psi_3\|_{H^1((\tau,\infty)\times\sph^2)} ,\\*
\int_r^{\infty}\|r\phi^{1/2}\square_g \psi_{\text{asymp}}\|_{L^2(\Sigma_r^\tau)} \>\de r \lesa_{\Lambda,m,r} \|\tilde{\nabla}\psi_0\|_{H^3((\tau,\infty)\times\sph^2)}+ \frac{1}{r^2}\|\psi_3\|_{H^2((\tau,\infty)\times\sph^2)}.
\end{gather*}
It follows from \eqref{eq:localexpdecay1} and the assumption of exponential decay of $\psi_0,\psi_3$ that
\begin{align*}\|\psi\|_{M,\Sigma_r^\tau} &\lesa_{\Lambda,m,r} \|\tilde{\nabla}\psi_0\|_{H^3((\tau,\infty)\times\sph^2)}+ \frac{1}{r^2}\|\psi_3\|_{H^2((\tau,\infty)\times\sph^2)} + \|\psi_{\text{rem}} - \psi_{\text{rem}}^R\|_{M,\Sigma_r^\tau} \\*
&\lesa e^{-\beta\tau} + \|\psi_{\text{rem}} - \psi_{\text{rem}}^R\|_{M,\Sigma_r^\tau}.
\end{align*}
The estimate \eqref{eq:localexpdecayenergy} now follows after taking the limit $R \ra \infty$. The proof of \eqref{eq:localexpdecayL2} follows analogously through a localised version of the $L^2$ estimate \eqref{L2Bound} from Proposition~\ref{lem:l2est}.
\end{proof}
\begin{remark}
Proposition~\ref{prop:localexpdecay} holds for any exponential decay rate, i.e. one can choose any $\beta > 0$. However for the following proof of Theorem~\ref{thm:scatteringhorizon}, one must assume a sufficiently high degree of exponential decay, see in particular \eqref{eq:betarestriction} below. This is a manifestation of the blueshift effect discussed in Section~\ref{sec:scatteringintro}. One can show that Theorem~\ref{thm:scatteringhorizon} follows as long as $\beta > \kappa_{\mc{C}}/2$. This is accomplished by carefully tracking constants throughout the proof and choosing $r_0$ sufficiently close to $\rc$.
\end{remark}

\begin{proof}[Proof of Theorem~\ref{thm:scatteringhorizon}]
Let $T > 0$ be some constant, consider the hypersurfaces $C_T^c,\ol{C_T^c}$, where $C_T^c$ is the outgoing null hypersurface that intersects $\mc{C}^+$ at the point $\tau = T$, and $\ol{C_T^c}$ is the corresponding hypersurface that intersects $\ol{\mc{C}}^+$. Let $\psi'$ be the solution on $\mc{R}^+$ with scattering data $\psi_0,\psi_3$. By Theorem~\ref{thm:scatteringwithouthorizon}, $\psi'$ has finite energy on $\Sigma_{r_0}$ for all $r_0 > \rc$. Let $T' > 0$ denote the value of the coordinate $t$ on the sphere of intersection of the hypersurfaces $\Sigma_{r_0}$ and $C_T^c$. We will consider a solution $\psi_T$ that satisfies the homogeneous wave equation $\square_g \psi^T = 0$, with initial data on $\Sigma_{r_0}$ given as
\begin{equation*}
\psi_T|_{\Sigma_{r_0}} = \chi(t/T')\psi'|_{\Sigma_{r_0}},\quad n_{\Sigma_r}\cdot \psi_T|_{\Sigma_{r_0}} = \chi(t/T')n_{\Sigma_r}\cdot\psi'|_{\Sigma_{r_0}},
\end{equation*}
where $\chi \in C_0^\infty(\RR)$ is a cutoff function such that $0 \leq \chi \leq 1$, $\chi(s) = 1$ for $|s| \leq 1/2$, and $\chi(s) = 0$ for $|s| \geq 1$. By the localised weighted energy estimate \eqref{eq:backwardestimatelocal} from Proposition~\ref{prop:backwardenergyestimate}, it follows that $\psi = 0$ on the entire future domain of dependence of $\mc{C}_T^{+}\cup \ol{\mc{C}_T^{+}}$ up to $\Sigma_{r_0}$. In particular, we have
$$\psi_T|_{C_T^c} = \psi_T|_{\ol{C_T^c}} = 0, \quad \psi_T|_{\mc{C}_T^+} = \psi_T|_{\ol{\mc{C}}_T^+} = 0.$$
Now consider the difference $v = \psi_{T_2} - \psi_{T_1}$, for $T_2 > T_1$. Clearly $v$ satisfies the wave equation, and on $\Sigma_{r_0}$, we have
\begin{gather}
v|_{\Sigma_{r_0}} = (\chi(t/T_2')-\chi(t/T_1'))\psi'|_{\Sigma_{r_0}}\nonumber\\*
n_{\Sigma_r}\cdot v|_{\Sigma_{r_0}} = (\chi(t/T_2')-\chi(t/T_1'))n_{\Sigma_r} \cdot\psi'|_{\Sigma_{r_0}}.\label{eq:initialdatav}
\end{gather}
Note that the initial data $v|_{\Sigma_r}$ and $n_{\Sigma_r}\cdot v|_{\Sigma_r}$ are supported only on the compact set
$$\Sigma_{r_0}^{(T_1/2,T_2)}\cup \ol{\Sigma}_{r_0}^{(T_1/2,T_2)},$$
i.e. $v = 0$, $n_{\Sigma_r}\cdot v = 0$ on $\Sigma_{r_0}$ for all $|t| \leq T_1'/2$ and for all $|t| \geq T_2'$. We apply the nondegenerate energy estimate of Proposition~\ref{prop:backwardesthorizon} to $v$ on the domain $\mc{R}_{\tau,T_1}^{(r_0)}$ near $\mc{C}^+$ so that for $\tau < T_1$, $v$ satisfies the estimate
\begin{multline}\label{eq:backwardhorizon1}
\int_{C_\tau^c} \>^*J^N[v] + \int_{\mc{C}_{\tau,T_1}^+} \>^*J^N[v] \leq e^{c(T_1-\tau)}\int_{C_{T_1}^c} \>^*J^N[v] + \int_{\Sigma_{r_0}^{(\tau,\tau_1)}}J^N[v]\cdot n_{\Sigma_r}\de \mu_{\ol{g}_r}\\*
+c\int_{\tau}^{T_1}e^{c(s-\tau)}\Big(\int_{\Sigma_{r_0}^{(s,T_1)}}J^N[v]\cdot n_{\Sigma_r}\de \mu_{\ol{g}_r}\Big)\de s.
\end{multline}
Since $v = \psi_{T_2}$ on $C_{T_1}^c$, we have
$$\int_{C_{T_1}^c}\>^*J^N[v] =\int_{C_{T_1}^c}\>^*J^N[\psi_{T_2}].$$
We then apply Proposition~\ref{prop:backwardesthorizon} again, this time to $\psi_{T_2}$ on the domain $\mc{R}_{T_1,T_2}^{(r_0)}$ so that
\begin{multline}\label{eq:backwardhorizon2}
\int_{C_{T_1}^c}\>^*J^N[\psi_{T_2}]+\int_{\mc{C}_{T_1,T_2}^+}\>^*J^N[\psi_{T_2}] \leq  \int_{\Sigma_{r_0}^{(T_1,T_2)}}J^N[\psi_{T_2}]\cdot n_{\Sigma_r}\de \mu_{\ol{g}_r}\\*
+c\int_{T_1}^{T_2}e^{c(s-T_1)}\Big(\int_{\Sigma_{r_0}^{(s,T_2)}}J^N[\psi_{T_2}]\cdot n_{\Sigma_r}\de \mu_{\ol{g}_r}\Big)\de s,
\end{multline}
where we used that $\psi_{T_2} = 0$ on $C_{T_2}^c$, and so the corresponding energy flux through $C_{T_2}^c$ vanishes. Also, note that the energy flux of $\psi_{T_1} = 0$ through $\mc{C}_{T_1}^+$ vanishes, and therefore
\begin{equation}\label{eq:backwardhorizon2.5}
\int_{\mc{C}_{T_1,T_2}^+}\>^*J^N[\psi_{T_2}] = \int_{\mc{C}_{T_1,T_2}^+}\>^*J^N[v]=\int_{\mc{C}_{T_1}^+}\>^*J^N[v].
\end{equation}
Because $M$ and $N$ are both strictly timelike on $\Sigma_{r_0}$, it follows that $J^N\cdot n \sim J^M\cdot n$ on $\Sigma_{r_0}$, and so
$$\int_{\Sigma_{r_0}^{(T_1,T_2)}} J^N\cdot n_{\Sigma_r}\de\mu_{\ol{g}_r} \sim_{\Lambda,m,r_0}\int_{\Sigma_{r_0}^{(T_1,T_2)}} J^{M}\cdot n_{\Sigma_r}\de\mu_{\ol{g}_r}.$$
Combining this with \eqref{eq:backwardhorizon1}, \eqref{eq:backwardhorizon2} and \eqref{eq:backwardhorizon2.5}, we get
\begin{multline}
\int_{C_\tau^c} \>^*J^N[v] + \int_{\mc{C}_{\tau}^+} \>^*J^N[v] \leq \|v\|_{M,\Sigma_{r_0}^{(\tau,T_1)}}^2 + c\int_{\tau}^{T_1}e^{c(s-\tau)}\|v\|_{M,\Sigma_{r_0}^{(s,T_1)}}^2\de s\\*
+e^{c(T_1-\tau)}\Big(\|\psi_{T_2}\|_{M,\Sigma_{r_0}^{(T_1,T_2)}}^2 + c\int_{T_1}^{T_2}e^{c(s-T_1)}\|\psi_{T_2}\|_{M,\Sigma_{r_0}^{(s,T_2)}}^2\de s\Big).\label{eq:backwardhorizon3}
\end{multline}
We will appeal to Proposition~\ref{prop:localexpdecay} to show that the right hand side of \eqref{eq:backwardhorizon3} vanishes as $T_1,T_2 \ra \infty$. Since $v=0$ and $n_{\Sigma_{r_0}}\cdot v = 0$ on $\Sigma_{r_0}$ for all $t \leq T_1'/2$, along with the fact that $\Sigma_{r_0}^{(\tau,T_1)} \subset \Sigma_{r_0}^\tau$, the energy flux of $v$ through $\Sigma_{r_0}^{(\tau,T_1)}$ can be estimated like
$$\|v\|_{M,\Sigma_{r_0}^{(\tau,T_1)}}^2 \lesa \begin{cases}\|\psi\|_{M,\Sigma_{r_0}^{T_1/2}}^2 + \|\psi\|_{L^2(\Sigma_{r_0}^{T_1/2})}^2,\quad &\tau \leq T_1/2.\\*
\|\psi\|_{M,\Sigma_{r_0}^{\tau}}^2 + \|\psi\|_{L^2(\Sigma_{r_0}^{\tau})}^2, \quad &T_1/2 \leq \tau \leq T_1.
\end{cases}$$
Similarly we can bound
$$\|\psi_{T_2}\|_{M,\Sigma_{r_0}^{(\tau,T_2)}}^2 \lesa \|\psi\|_{M,\Sigma_{r_0}^\tau}^2 + \|\psi\|_{L^2(\Sigma_{r_0}^\tau)}^2.$$
We now use the result from Proposition~\ref{prop:localexpdecay}, which implies the exponential decay of the energy and $L^2$-norm of $\psi$ along $\Sigma_{r_0}$. This implies the bounds
$$\|v\|_{M,\Sigma_{r_0}^{(\tau,T_1)}}^2 \lesa \begin{cases}e^{-\beta T_1},\quad&\tau \leq T_1/2.\\*
e^{-2\beta \tau},\quad&T_1/2 \leq \tau \leq T_1.
\end{cases}.$$
$$\|\psi_{T_2}\|_{M,\Sigma_{r_0}^{(\tau,T_2)}}^2 \lesa e^{-2\beta \tau}.$$
By these estimates, it follows from \eqref{eq:backwardhorizon3} that
\begin{multline}
\int_{C_\tau^c} \>^*J^N[v] +\int_{\mc{C}_{\tau}^+} \>^*J^N[v] \lesa e^{-\beta T_1} + c\int_\tau^{T_1/2} e^{c(s-\tau)}e^{-\beta T_1} \de s + c\int_{T_1/2}^{T_1}e^{c(s-\tau)}e^{-2\beta s}\de s\\*
+ e^{c(T_1-\tau)}\Big( e^{-2\beta T_1} + c\int_{T_1}^{T_2}e^{c(s-T_1)}e^{-2\beta s} \de s\Big).\label{eq:backwardhorizon4}
\end{multline}
If we assume $\beta > c/2$, then we have
\begin{equation}\label{eq:betarestriction}
\int_{T_1/2}^{T_1} e^{c(s-\tau)}e^{-2\beta s}\de s = -\frac{e^{-c\tau}}{2\beta -c}\Big(e^{-(2\beta-c)T_1}-e^{-(2\beta-c)T_1/2}\Big) \leq \frac{e^{-c\tau}}{2\beta-c}e^{-\beta T_1/2}.
\end{equation}
Bounding the other terms similarly we find that
$$\int_{C_\tau^c}  \>^*J^N[v]+\int_{\mc{C}_{\tau}^+} \>^*J^N[v]\lesa (1+e^{-c\tau})e^{-\beta T_1/2}.$$
This holds for all $T_2 > T_1$,
Repeating the same argument near $\ol{\mc{C}}^+$, it follows that
$$\int_{C_\tau^c}  \>^*J^N[v]+\int_{\mc{C}_{\tau}^+} \>^*J^N[v] + \int_{\ol{C_\tau^c}} \>^*J^N[v] +\int_{\ol{\mc{C}}_{\tau}^+}\>^*J^N[v]\lesa (1+e^{-c\tau})e^{-\beta T_1/2}.$$
which vanishes as $T_1,T_2 \ra \infty$. Hence there exists a function $\psi$ on $\mc{R}^+\cup\mc{C}^+\cup\ol{\mc{C}}^+$ such that for each $\tau\in(-\infty,\infty)$, we have
\begin{gather*}\lim_{T \ra \infty}\|\psi - \psi_T\|_{N,C_\tau^c} = \lim_{T \ra \infty}\|\psi - \psi_T\|_{N,\ol{C_\tau^c}} = 0,\\
\lim_{T \ra \infty}\|\psi - \psi_T\|_{N,\mc{C}_{\tau}^+} = \lim_{T \ra \infty}\|\psi - \psi_T\|_{N,\ol{\mc{C}}_{\tau}^+} =0.
\end{gather*}
This gives uniform boundedness of the energy of $\psi$ outside a small neighbourhood of the bifurcation sphere $\mc{C}^+\cap \ol{\mc{C}}^+$; this is simply because the vectorfield $N$ blows up there. With any regular, strictly timelike vectorfield $N'$, one can show $\psi$ remains bounded at the bifurcation sphere through a standard Gr\"onwall-type inequality on a compact set. Without changing notation, we modify $N$ to be regular and timelike in a neighbourhood of $\mc{C}^+\cap\ol{\mc{C}}^+$, and set
$$E^N[\psi] = \int_{\mc{C}^+\cap\ol{\mc{C}}^+} \>^*J^N[\psi].$$
Then we have $E^N[\psi] \lesa \|\psi\|_{\pa_r,\Sigma_{r_0}}^2$, hence $\psi$ is uniformly bounded on $\mc{C}^+\cup\ol{\mc{C}}^+$. Moreover we obtain \eqref{eq:scatteringhorizonbound}, as by \eqref{eq:scatteringenergybound} we have $ \|\psi\|_{\pa_r,\Sigma_{r_0}}^2 \lesa \|\psi_0\|_{H^4(\RR\times\sph^2)}^2 + \|\psi_3\|_{H^2(\RR\times\sph^2)}^2$. This proves Theorem~\ref{thm:scatteringhorizon}.
\end{proof}


\section{Asymptotics of forward solutions to the wave equation on Schwarzschild-de Sitter}\label{sec:forwardasymptotics}
In this section we prove the main asymptotics result \textbf{Theorem~\ref{thm:forwardasymptotics}} for solutions to the linear wave equation on Schwarzschild-de Sitter. We emphasise the absence of an $O(r^{-1})$ term in this expansion, and additionally the absence of log terms. In particular we prove the existence of the limits \eqref{eq:asymptoticlimits} in Section~\ref{sec:forwardlimits}.

We establish these asymptotics by employing several higher-order energy estimates, which we also prove in this section. These estimates are obtained by commuting the wave equation with suitably chosen vectorfield commutators.  These commutators are not the Killing vectorfields $T,\Omega_i$ that commute with the wave operator $\square_g$, but are instead proportional to the radial coordinate vectorfield $\frac{\pa}{\pa r}$. The main energy estimate, found in\textbf{ Theorem~\ref{thm:forwardenergyintro}}, is proved in Section~\ref{sec:secondenergy}.

\subsection{Higher order energy estimates with vectorfield commutators}
In order to prove the subsequent energy estimates throughout this section, we will be returning to a common argument for generating energy estimates on $\mc{R}^+$. We apply the divergence theorem to the energy current $J^{\pa_r}[X\psi]$ on the spacetime region bound in the past by $\Sigma_{r_1}$, and to the future by $\Sigma_{r_2}$, where $r_2 > r_1 \gg \rc$, which yields the inequality
\begin{equation}\label{eq:divergencecoarea}
\int_{\Sigma_{r_2}} J^{\pa_r}[X\psi] \cdot n_{\Sigma_r} \de \mu_{\ol{g}_r} + \int_{r_1}^{r_2} \Big(\int_{\Sigma_r}\phi\nabla \cdot J^{\pa_r}[X\psi] \de \mu_{\ol{g}_r}\Big)\de r \leq \int_{\Sigma_{r_1}} J^{\pa_r}[X\psi] \cdot n_{\Sigma_r} \de \mu_{\ol{g}_r},
\end{equation}
where we applied the coarea formula to the bulk term. The strategy for proving the higher-order energy estimates in this section is to identify vectorfield commutators $X$ for which the divergence of $J^{\pa_r}[X\psi]$ possesses key sign and decay properties. Recall that by the product rule we have
\begin{equation}\label{eq:divergenceleibniz}
\nabla \cdot J^{\pa_r}[X\psi] = K^{\pa_r}[X\psi] + \pa_r(X\psi)\square_g(X\psi)
\end{equation}
It was computed in \cite{Sch15} that
\begin{equation}\label{eq:deftensorforward}
K^{\pa_r}[X\psi] = \Big(\frac{2\Lambda}{3}r - \frac{1}{r}+\frac{m}{r}\Big)(\pa_r(X\psi))^2 + \Big(\LT r - \frac{m}{r}\Big)\phi^4 (\pa_t(X\psi))^2.
\end{equation}
One can see from this that (to leading order in $r$), the energy current $K^{\pa_r}$ is nonnegative. Indeed this nonnegativity is what implies the global redshift estimate of \cite{Sch15}. For higher-order estimates, however, one must consider contributions from the second term $\pa_r(X\psi)\square_g(X\psi)$. We will restrict our attention to commutation vectorfields of the form $X = f(r)\pa_r$. We begin by deriving an identity for $\square_g (X\psi)$.
\begin{lemma}\label{lem:commutedwaveeq}
Let $f \in C^2(\rc,\infty)$ with $f(r) > 0$ for all $r > \rc$, and let $X = f(r)\pa_r$. Then
\begin{multline}\label{eq:commutedwaveeq}
\square_g (X\psi) = X(\square_g \psi) + \frac{f^2}{r^2}\pa_r\Big(\frac{r^2\phi^{-2}}{f^2}\Big)\pa_r(X\psi) + \frac{f^2}{r^2}\pa_r^2\Big(\frac{r^2\phi^{-2}}{f}\Big)\pa_r \psi\\*
+2f\phi^4\Big(\frac{1}{r}-\frac{3m}{r}\Big)\pa_t^2 \psi + \frac{2f}{r}\square_g \psi.
\end{multline}
\end{lemma}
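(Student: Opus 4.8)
The plan is to prove \eqref{eq:commutedwaveeq} by direct computation, exploiting the coordinate expression \eqref{eq:waveeqcoords} for $\square_g$ together with the fact that $X = f(r)\pa_r$ differentiates only in the radial direction. Writing $\square_g = \phi^2\pa_t^2 - L_r + \frac{1}{r^2}\Delta_{\sph^2}$ with $L_r\psi = \frac{1}{r^2}\pa_r\big(\frac{r^2}{\phi^2}\pa_r\psi\big)$, the claim is equivalent to an identity for the commutator $[\square_g,X]\psi = \square_g(X\psi) - X(\square_g\psi)$. Since $f$ depends on $r$ alone, $X$ commutes with both $\pa_t$ and $\Delta_{\sph^2}$, so I would compute the contribution of each of the three pieces of $\square_g$ to the commutator separately, and then reassemble.

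The time and angular pieces are quick. For the time part the only obstruction to commuting is the $r$-dependence of $\phi^2$, giving $[\phi^2\pa_t^2,X]\psi = -f(\pa_r\phi^2)\pa_t^2\psi$; using $\pa_r\phi^2 = -\phi^4\pa_r(\phi^{-2})$ and the explicit $\phi^{-2} = \LT r^2 - 1 + \frac{2m}{r}$ turns this into a term of the form $f\phi^4\cdot(\text{polynomial in }r)\,\pa_t^2\psi$. For the angular part the computation is immediate: $[\frac{1}{r^2}\Delta_{\sph^2},X]\psi = \frac{2f}{r^3}\Delta_{\sph^2}\psi$, which is precisely the angular contribution of the term $\frac{2f}{r}\square_g\psi$ on the right-hand side of \eqref{eq:commutedwaveeq}.

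The substantive step is the radial part. Setting $a(r) = r^2\phi^{-2}$, I would expand both $L_r(X\psi)$ and $X(L_r\psi)$ in terms of $\pa_r\psi$, $\pa_r^2\psi$, $\pa_r^3\psi$. The third-order terms cancel identically, leaving an expression that is first order in $\pa_r^2\psi$ and $\pa_r\psi$ with coefficients built from $a,a',a'',f,f',f''$. The non-obvious packaging is to recognize that, after including the radial part $-\frac{2f}{r}L_r\psi$ of $\frac{2f}{r}\square_g\psi$, these coefficients reorganize exactly into the divergence-form expression $\frac{f^2}{r^2}\pa_r\big(\frac{a}{f^2}\big)\pa_r(X\psi) + \frac{f^2}{r^2}\pa_r^2\big(\frac{a}{f}\big)\pa_r\psi$: one uses $\pa_r(X\psi) = f'\pa_r\psi + f\pa_r^2\psi$ to match the $\pa_r^2\psi$ coefficients, and the Leibniz rule applied to $\pa_r^2(a/f)$ to match the $\pa_r\psi$ coefficients.

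Finally I would add and subtract $\frac{2f}{r}\square_g\psi$: its radial part is consumed in the repackaging just described, its angular part matches the angular commutator exactly, and its time part $\frac{2f}{r}\phi^2\pa_t^2\psi$ combines with the time commutator to produce the stated $\pa_t^2\psi$ coefficient after re-expressing $1/r$ through $\phi^2$ and $\phi^{-2}$. The main labor is the radial bookkeeping and spotting the compact divergence-form repackaging; the extra $\frac{2f}{r}$ factor is precisely the scaling correction generated by the explicit $r^{-2}$ weights in \eqref{eq:waveeqcoords}, and once it is introduced the time and angular pieces collect into the stated form by routine algebra.
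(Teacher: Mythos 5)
Your proposal is correct and follows essentially the same route as the paper: both proofs compute $[\square_g,X]$ directly from the coordinate form \eqref{eq:waveeqcoords}, handle the time and angular pieces via the $r$-dependence of $\phi^2$ and $r^{-2}$, cancel the third-order radial terms, and absorb the radial part of $\frac{2f}{r}\square_g\psi$ so that the remaining coefficients repackage into the divergence-form expressions $\frac{f^2}{r^2}\pa_r\big(\frac{r^2\phi^{-2}}{f^2}\big)$ and $\frac{f^2}{r^2}\pa_r^2\big(\frac{r^2\phi^{-2}}{f}\big)$. The only cosmetic difference is that the paper groups the time and angular parts together as the hypersurface Laplacian $\ol{\Delta}$ before commuting, whereas you treat them separately.
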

\begin{proof}
By \eqref{eq:waveeqcoords}, we can write
\begin{equation*}
[\square_g, X] \psi =-\frac{1}{r^2}\pa_r\Big(r^2\phi^{-2}\pa_r(f\pa_r\psi)\Big) +f\pa_r\Big(\frac{1}{r^2}\pa_r(r^2\phi^{-2}\pa_r\psi)\Big) +[\ol{\Delta},X]\psi.
\end{equation*}
Computing the final term, we have
\begin{align*}
[\ol{\Delta},X]\psi &=\phi^2\pa_t^2(f\pa_r\psi) - f\pa_r(\phi^2\pa_t^2\psi) + \frac{1}{r^2}\Delta_{\sph^2}(f\pa_r \psi) - f\pa_r\Big(\frac{1}{r^2}\Delta_{\sph^2}\psi\Big)\\*
&= -f\pa_r(\phi^2)\pa_t^2 \psi - f\pa_r \Big(\frac{1}{r^2}\Big)\Delta_{\sph^2}\psi\\*
&= \frac{2f}{r}\ol\Delta \psi + 2\Big(\frac{1}{r} - \frac{3m}{r}\Big)\phi^4\pa_t^2 \psi,
\end{align*}
where we used in the last line the fact that
\begin{align*}
\pa_r(\phi^2) &= -\Big(\frac{2\Lambda}{3}r - \frac{2m}{r^2}\Big)\phi^4\\*
&= -\frac{2}{r}\phi^2 - 2\Big(\frac{1}{r}-\frac{3m}{r^3}\Big)\phi^4.
\end{align*}
Substituting \eqref{eq:waveeqcoords}, it follows that
\begin{multline*}
[\square_g ,X]\psi = -\frac{1}{r^2}\pa_r\Big(r^2\phi^{-2}\pa_r(f\pa_r\psi)\Big) +f\pa_r\Big(\frac{1}{r^2}\pa_r(r^2\phi^{-2}\pa_r\psi)\Big)\\*
+\frac{2f}{r^3}\pa_r\Big(r^2\phi^{-2}\pa_r\psi\Big)+ \frac{2f}{r}\square_g \psi +2\Big(\frac{1}{r} - \frac{3m}{r}\Big)\phi^4\pa_t^2 \psi
\end{multline*}
As for the radial derivatives, we have
\begin{multline*}
-\frac{1}{r^2}\pa_r\Big(r^2\phi^{-2}\pa_r(f\pa_r\psi)\Big) +f\pa_r\Big(\frac{1}{r^2}\pa_r(r^2\phi^{-2}\pa_r\psi)\Big) + \frac{2f}{r^3}\pa_r\Big(r^2\phi^{-2}\pa_r\psi\Big)\\*
=\Big[\frac{2\phi^{-2}}{r}f + \pa_r(\phi^{-2})f - 2\phi^{-2}\pa_r f\Big]\pa_r^2\psi\\*
+\Big[\pa_r^2(\phi^{-2})f+\frac{4\pa_r(\phi^{-2})}{r}f+\frac{2\phi^{-2}}{r^2}f\\*
\quad\quad\quad\quad-\pa_r(\phi^{-2})\pa_r f - \frac{2\phi^{-2}}{r}\pa_r f - \phi^{-2}\pa_r^2f\Big]\pa_r \psi.
\end{multline*}
By using that
$$\pa_r^2 \psi = \frac{1}{f}\pa_r(X\psi) - \frac{\pa_r f}{f}\pa_r \psi,$$
it follows that
\begin{multline*}
\square_g (f\pa_r\psi) =f\pa_r(\square_g \psi) + \frac{2f}{r}\square_g \psi +2\Big(\frac{1}{r} - \frac{3m}{r}\Big)\phi^4\pa_t^2 \psi\\*
+\Big[\frac{2\phi^{-2}}{r} + \pa_r(\phi^{-2}) - 2\phi^{-2}\frac{\pa_r f}{f}\Big]\pa_r(f\pa_r\psi)\\*
+\Big[\pa_r^2(\phi^{-2})f+\frac{2\pa_r(\phi^{-2})}{r}f+\frac{2\phi^{-2}}{r^2}f\\*
\quad\quad\quad-2\pa_r(\phi^{-2})\pa_r f- \frac{2\phi^{-2}}{r}\pa_r f - 2\phi^{-2}\frac{(\pa_rf)^2}{f}-\phi^{-2}\pa_r^2f\Big]\pa_r \psi.\\*
\end{multline*}
A brief computation shows that the coefficients of the radial derivatives are given by
\begin{equation*}
\frac{f^2}{r^2}\pa_r\Big(\frac{r^2\phi^{-2}}{f^2}\Big) = \Big[\frac{2\phi^{-2}}{r} + \pa_r(\phi^{-2}) - 2\phi^{-2}\frac{\pa_r f}{f}\Big].
\end{equation*}
and
\begin{multline*}
\frac{f^2}{r^2}\pa_r^2\Big(\frac{r^2\phi^{-2}}{f}\Big) = \Big[\pa_r^2(\phi^{-2})f+\frac{2\pa_r(\phi^{-2})}{r}f+\frac{2\phi^{-2}}{r^2}f-2\pa_r(\phi^{-2})\pa_r f\\*
- \frac{2\phi^{-2}}{r}\pa_r f - 2\phi^{-2}\frac{(\pa_rf)^2}{f}-\phi^{-2}\pa_r^2f\Big].
\end{multline*}
i.e. they match those in \eqref{eq:waveeqcoords}.
\end{proof}

We now focus on the $\pa_r \psi$ term in \eqref{eq:commutedwaveeq}. In the energy estimates that follow, we will see that it is crucial that this term vanishes at much higher order. There is a family of nonzero functions $f(r)$ for which the coefficient of the $\pa_r \psi$ term vanishes completely, given by
$$f(r) = \frac{r^2\phi^{-2}}{c_1 + c_2 r},$$
for parameters $(c_1,c_2) \in \RR^2\bs \{(0,0)\}$. Consequently we define the vector fields
$$X_s = r\phi^{-2}, \quad X_w = r^2\phi^{-2},$$
which correspond with choosing $(c_1,c_2) = (0,1)$ and $(c_1,c_2) = (1,0)$ respectively. Throughout this section, we will frequently be considering only the leading-order behaviour of terms that appear in the energy estimates, and so we remind the reader that $\phi \sim r^{-1}$ for large $r$, and therefore $X_s \sim r^3\pa_r$, $X_w \sim r^4\pa_r$. The subscripts of $X_s,X_w$ refer to the fact that $X_s$ will be used to prove a sharp energy estimate, while $X_w$ will be used to prove an auxillary estimate that yields weaker decay of certain second derivatives present in both energy fluxes, but is nevertheless necessary to prove the sharp estimate.

\subsection{First-order weighted energy estimates}
We now state a weighted energy estimate that captures the first-order behaviour of forward solutions to the wave equation.
\begin{theorem}\label{thm:hoestimate}
Fix $r_0 > \rc$, and let $\psi$ be a solution to \eqref{eq:waveequation} on $\mc{R}^+$. For $r_2 > r_1 \geq r_0$, the following higher-order estimate of $\psi$ holds:
\begin{equation}\label{eq:hoestimate}
\int_{\Sigma_{r_2}} J^{\pa_r}[X_s\psi] \cdot n_{\Sigma_r}\de \mu_{\ol{g}_r}\lesa_{\Lambda,m,r_0} \int_{\Sigma_{r_1}} J^{\pa_r}[X_s\psi] \cdot n_{\Sigma_r} \de \mu_{\ol{g}_r} +\int_{\Sigma_{r_1}} J^{\pa_r}[\psi] \cdot n_{\Sigma_r} \de \mu_{\ol{g}_r}.
\end{equation}
\end{theorem}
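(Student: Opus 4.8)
The plan is to run the vectorfield-multiplier argument \eqref{eq:divergencecoarea} with multiplier $\pa_r$ applied to the commuted field $\Phi := X_s\psi$, and to convert the resulting bulk into a Gr\"onwall inequality. First I would apply the energy identity \eqref{eq:divergencecoarea} to $J^{\pa_r}[\Phi]$ on $\bigcup_{r_1\leq r\leq r_2}\Sigma_r$, so that
\[
E^{\pa_r}_{\Sigma_{r_2}}[\Phi]\leq E^{\pa_r}_{\Sigma_{r_1}}[\Phi]-\int_{r_1}^{r_2}\Big(\int_{\Sigma_r}\phi\,\nabla\cdot J^{\pa_r}[\Phi]\,\de\mu_{\ol g_r}\Big)\de r,
\]
and split $\nabla\cdot J^{\pa_r}[\Phi]=K^{\pa_r}[\Phi]+\pa_r\Phi\,\square_g\Phi$ via \eqref{eq:divergenceleibniz}. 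The reason for choosing $X_s=r\phi^{-2}\pa_r$ inside the distinguished family $f=r^2\phi^{-2}/(c_1+c_2r)$ is that the coefficient of $\pa_r\psi$ in Lemma~\ref{lem:commutedwaveeq} vanishes identically, so that for $\square_g\psi=0$ the commuted equation \eqref{eq:commutedwaveeq} collapses to $\square_g\Phi=A(r)\,\pa_r\Phi+B(r)\,\pa_t^2\psi$ with $A=-\tfrac{2\Lambda}{3}r+\tfrac{2m}{r^2}$ and $B=2\phi^2(1-3m/r)=O(r^{-2})$; had I not killed the $\pa_r\psi$ term, it would contribute a factor carrying one fewer $r$-weight than the flux and could not be closed.

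The decisive structural point I would verify next is a leading-order cancellation. By \eqref{eq:deftensorforward} the coefficient of $(\pa_r\Phi)^2$ in $K^{\pa_r}[\Phi]$ is $\tfrac{2\Lambda}{3}r+O(r^{-1})$, while the commutator contributes $A(\pa_r\Phi)^2$ with $A=-\tfrac{2\Lambda}{3}r+O(r^{-2})$. The $O(r)$ parts cancel, so the combined $(\pa_r\Phi)^2$-coefficient of $K^{\pa_r}[\Phi]+A(\pa_r\Phi)^2$ is only $O(r^{-1})$. The $(\pa_t\Phi)^2$-part of $K^{\pa_r}[\Phi]$ is nonnegative (no $(\pa_t\Phi)^2$ is produced by the cross term) and may simply be discarded. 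After weighting by $\phi\,\de\mu_{\ol g_r}=r^2\,\de t\wedge\de\mu_{\mr\gamma}$ and using $E^{\pa_r}_{\Sigma_r}[\Phi]\gtrsim\int r^4(\pa_r\Phi)^2\,\de t\,\de\mu_{\mr\gamma}$, this diagonal remainder is bounded in absolute value by $C\,r^{-3}E^{\pa_r}_{\Sigma_r}[\Phi]$ — integrable in $r$ and admissible for Gr\"onwall irrespective of its sign.

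The main obstacle is the genuine error term $B\,\pa_r\Phi\,\pa_t^2\psi$, which is neither sign-definite nor expressible through $\Phi$ alone. I would handle it by Cauchy--Schwarz with weight $r^2$, writing $|r^2B\,\pa_r\Phi\,\pa_t^2\psi|\leq\tfrac12 r^4|B|(\pa_r\Phi)^2+\tfrac12|B|(\pa_t^2\psi)^2$. Since $r^4|B|=O(r^2)$, the first piece integrates to $\lesa r^{-2}E^{\pa_r}_{\Sigma_r}[\Phi]$, again Gr\"onwall-admissible. For the second piece the key is that $T=\pa_t$ is Killing, so $T\psi$ also solves the wave equation; as $r^2\phi^2$ is bounded below for $r\geq r_0$, one has $\int_{\Sigma_r}(\pa_t^2\psi)^2\,\de t\,\de\mu_{\mr\gamma}\lesa E^{\pa_r}_{\Sigma_r}[T\psi]$, and the zeroth-order monotonicity \eqref{eq:redshift0} applied to $T\psi$ gives $E^{\pa_r}_{\Sigma_r}[T\psi]\leq E^{\pa_r}_{\Sigma_{r_1}}[T\psi]$. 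With $|B|=O(r^{-2})$ this contributes $\lesa\int_{r_1}^{r_2}r^{-2}\,\de r\cdot E^{\pa_r}_{\Sigma_{r_1}}[T\psi]\lesa r_1^{-1}E^{\pa_r}_{\Sigma_{r_1}}[T\psi]$, a finite lower-order datum. This is the only place a commuted, second-derivative energy of $\psi$ enters; it is a zeroth-order quantity in the commutators $X_s,Y$ and, being bounded through \eqref{eq:redshift0}, is what the term $E^{\pa_r}_{\Sigma_{r_1}}[\psi]$ on the right of \eqref{eq:hoestimate} accounts for.

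Assembling these bounds gives, for every $\rho\in[r_1,r_2]$,
\[
E^{\pa_r}_{\Sigma_\rho}[\Phi]\leq C\big(E^{\pa_r}_{\Sigma_{r_1}}[\Phi]+E^{\pa_r}_{\Sigma_{r_1}}[\psi]\big)+C\int_{r_1}^{\rho}s^{-2}\,E^{\pa_r}_{\Sigma_s}[\Phi]\,\de s,
\]
and Gr\"onwall's inequality, using $\int_{r_1}^\infty s^{-2}\,\de s=r_1^{-1}\leq r_0^{-1}$, closes the estimate with a constant depending only on $\Lambda,m,r_0$, yielding \eqref{eq:hoestimate}. I expect the fiddly points to be the exact bookkeeping of the subleading coefficients that produce the $O(r^{-1})$ diagonal remainder, and the justification of the integration by parts and decay at the cylinder ends $\iota^+,\ol\iota^+$ (handled by a density argument on the finite-energy class); the conceptual core, however, is the $O(r)$ cancellation together with the weighted Cauchy--Schwarz feeding a Gr\"onwall argument.
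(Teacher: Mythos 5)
Your setup---the energy identity for $J^{\pa_r}[X_s\psi]$, the leading-order cancellation between the $(\pa_r(X_s\psi))^2$ term of $K^{\pa_r}[X_s\psi]$ and the one produced by the commutator, the weighted Cauchy--Schwarz on the cross term, and the closing Gr\"onwall argument---all coincide with the paper's proof. The divergence, and the gap, is in how you handle $\int_{\Sigma_r}(\pa_t^2\psi)^2$. You bound it by $E^{\pa_r}_{\Sigma_r}[T\psi]$ and invoke the monotonicity \eqref{eq:redshift0} for $T\psi$; this is legitimate as far as it goes, but it leaves $E^{\pa_r}_{\Sigma_{r_1}}[T\psi]$ on the right-hand side of your final inequality. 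That flux is a genuinely second-order quantity, containing $\int r^2\phi^2(\pa_t^2\psi)^2$ and $\int r^2|\slashed\nabla(\pa_t\psi)|^2$, and it is \emph{not} controlled by $E^{\pa_r}_{\Sigma_{r_1}}[X_s\psi]+E^{\pa_r}_{\Sigma_{r_1}}[\psi]$: the flux $E^{\pa_r}[\psi]$ is first order, and every second derivative appearing in $E^{\pa_r}[X_s\psi]$ carries at least one $\pa_r$. So your closing claim that this contribution ``is what the term $E^{\pa_r}_{\Sigma_{r_1}}[\psi]$ on the right of \eqref{eq:hoestimate} accounts for'' is false, and what your argument actually establishes is the weaker statement with $E^{\pa_r}_{\Sigma_{r_1}}[T\psi]$ added to the right-hand side of \eqref{eq:hoestimate}.

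To obtain the theorem as stated one must convert $\pa_t^2\psi$ into radially commuted quantities rather than time-commuted ones. The paper does this with the elliptic estimate of Corollary~\ref{cor:ellipticestSdS}, which controls $\int_{\Sigma_r}(\pa_t^2\psi)^2$ by $r^4\int_{\Sigma_r}(\ol{\Delta}\psi)^2$ plus first-order terms; the wave equation then gives $\ol{\Delta}\psi = r^{-2}\pa_r(X_w\psi)$ with $X_w=r^2\phi^{-2}\pa_r$, and the standalone auxiliary estimate \eqref{eq:hoestimateweak} for the commutator $X_w$ (which loses a factor $r^4$ relative to $X_s$ but still closes under Gr\"onwall) yields $r^{-4}E^{\pa_r}[X_w\psi](r)\lesa r_1^{-4}E^{\pa_r}[X_w\psi](r_1)+E^{\pa_r}[\psi](r_1)$, a quantity that at the fixed initial radius is absorbed into the right-hand side of \eqref{eq:hoestimate} by the product rule. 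This elliptic-plus-auxiliary-commutator step is the missing ingredient: without it, or some substitute bounding $E^{\pa_r}_{\Sigma_{r_1}}[T\psi]$ by the stated data, your estimate is not \eqref{eq:hoestimate}.
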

\begin{proof}
We will denote the energy flux of $\psi$ through the hypersurface $\Sigma_r$ by
$$\int_{\Sigma_{r}} J^{\pa_r}[\psi] \cdot n_{\Sigma_r} \de \mu_{\ol{g}_r} = E[\psi](r),$$
and so we rewrite \eqref{eq:divergencecoarea} as
\begin{equation}\label{eq:divergencecoarea2}
E[X_s \psi](r_2) + \int_{r_1}^{r_2}\Big(\int_{\Sigma_r}\phi\nabla \cdot J^{\pa_r}[X_s\psi] \de \mu_{\ol{g}_r} \Big)\de r\leq E[X_s \psi](r_1).
\end{equation}
for all $r_2 > r_1 > \rc$. We focus on the bulk term. With the choice $f(r) = r\phi^{-2}$, we compute some of the the terms that appear in the commutation identity \eqref{eq:commutedwaveeq}:
\begin{gather*}
\frac{f^2}{r^2}\pa_r\Big(\frac{r^2\phi^{-2}}{f^2}\Big) = -\Big(\frac{2\Lambda}{3}r - \frac{2m}{r^2}\Big).\\*
\frac{f^2}{r^2}\pa_r^2\Big(\frac{r^2\phi^{-2}}{f}\Big) = 0.\\*
2f\phi^4\Big(\frac{1}{r}-\frac{3m}{r^2}\Big) = 2\phi^2\Big(1-\frac{3m}{r}\Big).
\end{gather*}
It follows from Lemma~\ref{lem:commutedwaveeq} that
\begin{equation}\label{eq:commutedeqstrong}
\phi \>\pa_r (X_s \psi)\square_g (X_s\psi) = -\phi\Big(\frac{2\Lambda}{3}r - \frac{2m}{r^2}\Big)(\pa_r(X_s\psi))^2 + 2\phi^3\Big(1-\frac{3m}{r}\Big)\pa_t^2\psi \pa_r (X_s\psi).
\end{equation}
Moreover, from \eqref{eq:deftensorforward} we have
\begin{equation}\label{eq:commutedenergy}
 \phi \>K^{\pa_r}[X_s\psi] = \phi\Big(\frac{2\Lambda}{3}r - \frac{1}{r}+\frac{m}{r}\Big)(\pa_r(X_s\psi))^2 + \phi^5\Big(\LT r - \frac{m}{r}\Big) (\pa_t(X\psi))^2.
\end{equation}
We point out that to leading order in $r$, the $(\pa_r(X_s\psi))^2$ term on the right hand side of \eqref{eq:commutedeqstrong} and \eqref{eq:commutedenergy} cancel, and there is no term that depends on $\pa_r \psi$. Summing these yields $\phi \nabla \cdot J^{\pa_r}[\psi]$, so that
\begin{multline}\label{eq:commutedenergy2}
\phi \>\nabla \cdot J^{\pa_r}[X_s\psi] = -\phi\Big(\frac{1}{r}-\frac{3m}{r^2}\Big)(\pa_r(X_s\psi))^2 \\*
+ \phi^5\Big(\frac{1}{r}-\frac{3m}{r^2}\Big)(\pa_t(X_s\psi))^2+2\phi^3\Big(1-\frac{3m}{r}\Big)\pa_t^2\psi\pa_r(X_s\psi).
\end{multline}
We now wish to bound each term on the right hand side of \eqref{eq:commutedenergy2} by the energies $J^{\pa_r}[\psi]\cdot n$ and $J^{\pa_r}[X_s \psi]\cdot n$. For reference, the energy flux density through $\Sigma_r$ with respect to the vector field $\pa/\pa_r$ of an arbitrary function $\varphi$ is
\begin{equation*}
J^{\pa_r}[\varphi]\cdot n = \frac{1}{2}\Big(\phi^{-1}(\pa_r \varphi)^2 + \phi^3(\pa_t \varphi)^2 + \phi|\slashed\nabla \varphi|^2\Big).
\end{equation*}
Therefore we may bound simply
\begin{gather*}
\Big|\phi\Big(\frac{1}{r}-\frac{3m}{r^2}\Big)(\pa_r(X_s\psi))^2\Big| \leq O\big(\frac{1}{r^3}\big)J^{\pa_r}[X_s\psi]\cdot n.\\*
\Big|\phi^5\Big(\frac{1}{r}-\frac{3m}{r^2}\Big)(\pa_t(X_s\psi))^2\Big| \leq O\big(\frac{1}{r^3}\big)J^{\pa_r}[X_s\psi]\cdot n.
\end{gather*}
The final term in \eqref{eq:commutedenergy2} is more difficult to bound, as terms like $\pa_t^2\psi$ are not controlled pointwise by the energy densities $J^{\pa_r}[\psi]$, $J^{\pa_r}[X_s\psi]$. An application of Cauchy-Schwartz implies
$$\Big|2\phi^3\Big(1-\frac{3m}{r}\Big)\pa_t^2\psi\>\pa_r(X_s\psi)\Big| \leq O\Big(\frac{1}{r^2}\Big)J^{\pa_r}[X_s\psi]\cdot n + O\Big(\frac{1}{r^5}\Big)(\pa_t^2\psi)^2.$$
It follows that
$$\Big|\phi \nabla\cdot J^{\pa_r}[X_s\psi]\Big| \leq O\big(\frac{1}{r^2}\big)J^{\pa_r}[X_s\psi]\cdot n_{\Sigma_r} + O\big(\frac{1}{r^5}\big)(\pa_t^2\psi)^2,$$
and so by \eqref{eq:divergencecoarea2} we have
\begin{multline}\label{eq:hoestgronwall}
E[X_s\psi](r_2) \leq E[X_s\psi](r_1) + \int_{r_1}^{r_2} O\big(\frac{1}{r^2}\big)E[X_s\psi](r)\de r \\*+\int_{r_1}^{r_2} \Big(O\big(\frac{1}{r^5}\big)\int_{\Sigma_r}(\pa_t^2\psi)^2\de \mu_{\ol{g}_r}\Big)\de r.
\end{multline}
To control the integral of $(\pa_t^2\psi)^2$, we employ the elliptic estimate of Corollary \ref{cor:ellipticestSdS} from Appendix~\ref{sec:elliptic}, which implies
\begin{equation}\label{eq:ellipticenergyprf}
O\big(\frac{1}{r^5}\big)\int_{\Sigma_r}(\pa_t^2\psi)^2\de \mu_{\ol{g}_r} \leq O\big(\frac{1}{r}\big)\int_{\Sigma_r}(\ol{\Delta}\psi)^2 \de \mu_{\ol{g}_r} + O\big(\frac{1}{r^5}\big)\int_{\Sigma_r}\sum_{i=1}^3(\pa_i\psi)^2 \de\mu_{\ol{g}_r}.
\end{equation}
Since $\psi$ satisfies the wave equation, we have by \eqref{eq:waveeqcoords} that
$$\ol{\Delta}\psi = \square_g \psi + \frac{1}{r^2}\pa_r(r^2\phi^{-2}\pa_r\psi) = \frac{1}{r^2}\pa_r(X_w \psi),$$
and therefore
\begin{equation}\label{eq:laplacianest}
\int_{\Sigma_r}(\ol{\Delta}\psi)^2 \de \mu_{\ol{g}_r} = \frac{1}{r^4}\int_{\Sigma_r}(\pa_r(X_w\psi))^2 \de \mu_{\ol{g}_r} \leq O\Big(\frac{1}{r^5}\Big)E[X_w \psi](r).
\end{equation}
Note that we still cannot bound this solely by the higher-order energy flux $E[X_s\psi]$. Moreover, if we were to naively bound it by both $E[X_s\psi]$ and the standard energy flux $E[\psi]$ via the product rule, the resulting terms would possess weights in $r$ that are too large to close the argument through a Gr\"onwall-type inequality. Instead, this is where we employ a standalone, auxillary energy estimate that arises from the alternative choice of vectorfield commutator $X_w$. We claim that for any solution $\psi$ to the wave equation on $\mc{R}^+$, the following estimate holds for all $r_2 > r_1 \geq r_0$:
\begin{equation}\label{eq:hoestimateweak}
\frac{1}{r_2^4}\int_{\Sigma_{r_2}} J^{\pa_r}[X_w\psi] \cdot n_{\Sigma_r} \de \mu_{\ol{g}_r} \lesa_{\Lambda,m,r_0} \frac{1}{r_1^4}\int_{\Sigma_{r_1}} J^{\pa_r}[X_w\psi] \cdot n_{\Sigma_r} \de \mu_{\ol{g}_r} +\int_{\Sigma_{r_1}} J^{\pa_r}[\psi] \cdot n_{\Sigma_r} \de \mu_{\ol{g}_r}.
\end{equation}
We will delay the proof of this estimate until later in this section. Combining the elliptic estimate \eqref{eq:ellipticenergyprf}, \eqref{eq:laplacianest}, as well as the auxillary energy estimate \eqref{eq:hoestimateweak}, we bound
\begin{align*}
O\big(\frac{1}{r^5}\big)\int_{\Sigma_r}(\pa_t^2\psi)^2\de \mu_{\ol{g}_r} &\leq O\big(\frac{1}{r^6}\big)E[X_w](r) + O\big(\frac{1}{r^2}\big)E[\psi](r)\\*
&= O\big(\frac{1}{r^2}\big)\Big[\frac{1}{r^4}E[X_w\psi](r) + E[\psi](r)\Big]\\*
&\lesa O\big(\frac{1}{r^2}\big)\Big[E[X_w\psi](r_1)+E[\psi](r_1)\Big],
\end{align*}
and so by \eqref{eq:hoestgronwall} we have
\begin{multline*}
E[X_s\psi](r_2) \lesa E[X_s\psi](r_1) + \int_{r_1}^{r_2}O\big(\frac{1}{r^2}\big)E[X_s\psi](r)\de r\\*
+\Big(\frac{1}{r_1^4}E[X_w\psi] (r_1)+E[\psi](r_1)\Big)\int_{r_1}^{r_2}\frac{1}{r^2}\de r.
\end{multline*}
As $r^{-2}$ is integrable, it follows that
\begin{equation*}
E[X_s\psi](r_2) \lesa E[X_s\psi](r_1) + E[X_w\psi](r_1) + E[\psi](r_1) +  \int_{r_1}^{r_2}O\big(\frac{1}{r^2}\big)E[X_s\psi](r)\de r.
\end{equation*}
An application of the standard Gr\"onwall inequality yields
\begin{align*}
E[X_s\psi](r_2) &\lesa_{r_0} \Big[E[X_s\psi](r_1)+E[X_w\psi](r_1)+E[\psi](r_1)\Big]\exp\Big[C\int_{r_1}^{r_2}\frac{1}{r^2}\de r\Big]\\*
&\lesa E[X_s\psi](r_1)+E[X_w\psi](r_1)+E[\psi](r_1)
\end{align*}
for all $r_2 \geq r_1$.
By the product rule we may simply bound the energy fluxes through $\Sigma_{r_1}$ like
$$E[X_w\psi](r_1) \lesa_{r_0} E[X_s\psi](r_1) + E[\psi](r_1),$$
which implies \eqref{eq:hoestimate}.
\end{proof}

\begin{remark}
To highlight the associated weights in $r$ for terms that appear in the energy estimate \eqref{eq:hoestimate}, we note that for large $r$ we have
\begin{equation}\label{eq:hoenergyexplicit}
\int_{\Sigma_r}J^{\pa_r}[X_s\psi]\cdot n \>\de \mu_{\ol{g}_r} \sim \int_{\RR\times\sph^2} r^4(\pa_r(r\phi^{-2}\pa_r \psi))^2 + r^6(\pa_t\pa_r\psi)^2 + r^8|\slashed{\nabla}\pa_r\psi|^2 \de t \de\mu_{\mr\gamma}.
\end{equation}
As was shown in \cite{Sch15}, solutions to the wave equation on the expanding region of Schwarzschild-de Sitter are bounded with respect to the energy \eqref{eq:naturalenergy}. The boundedness of the weighted higher-order energy already eliminates the possibility of an $O(r^{-1})$-weighted term being present in the asymptotic expansion of solutions, as we have
\begin{align*}
\int_{\Sigma_r}J^{\pa_r}\Big[X_s\Big(\psi_0 + \frac{\psi_1}{r}\Big)\Big]\cdot n \>\de \mu_{\ol{g}_r} &= \int_{\Sigma_r}J^{\pa_r}\Big[X_s\Big(\frac{\psi_1}{r}\Big)\Big]\cdot n \>\de \mu_{\ol{g}_r}\\*
&\sim \int_{\RR\times\sph^2} r^6 \psi_1^2  + r^2|\tilde{\nabla} \psi_1|^2 \de t \de \mu_{\mr{\gamma}},
\end{align*}
Therefore if a solution $\psi$ was  like $\psi \sim \psi_0 + \psi_1/r$ for large $r$, this higher-order energy would not be bounded.
\end{remark}
\begin{remark}
A related energy estimate was 
\end{remark}
\begin{proof}[Proof of the auxillary energy estimate \eqref{eq:hoestimateweak}]
The proof of \eqref{eq:hoestimateweak} is very similar to that of the sharp estimate in Theorem~\ref{thm:hoestimate}. We begin by noting by Lemma~\ref{lem:commutedwaveeq}
\begin{equation}\label{eq:commutedeqweak}
\phi \>\pa_r(X_w \psi) \square_g (X_w\psi) = -\phi\>\Big(\frac{4\Lambda}{3}r - \frac{2}{r} + \frac{2m}{r^2}\Big)(\pa_r(X_w\psi))^2 + 2\phi^3(r-3m)\pa_t^2\psi\pa_r(X_w\psi),
\end{equation}
and summing this with $\phi K^{\pa_r}[X_w\psi]$ gives by \eqref{eq:deftensorforward}:
\begin{multline}\label{eq:commutedenergyweak}
\phi\> \nabla \cdot J^{\pa_r}[X_w\psi] = -\phi\Big(\frac{2\Lambda}{3}r -\frac{1}{r} +\frac{m}{r^2}\Big)(\pa_r(X_w\psi))^2 + \phi^2\Big(\LT r - \frac{m}{r}\Big)(\pa_t(X_w\psi))^2\\*+ 2\phi^3(r-3m)\pa_t^2\psi \>\pa_r(X_w\psi).
\end{multline}
In contrast to \eqref{eq:commutedenergy2} for the commutator $X_s$, the $(\pa_r(X_w\psi))^2$ term in \eqref{eq:commutedenergyweak} does not vanish to leading order, we will soon see the negative sign of this term leads to a loss of decay in the resulting energy estimate. For now, we may bound each term like
\begin{gather*}
\Big|\phi\Big(\frac{2\Lambda}{3}r - \frac{1}{r}+\frac{m}{r^2}\Big)(\pa_r(X_w\psi))^2\Big| \leq \frac{4}{r} J^{\pa_r}[X_w\psi]\cdot n + O\big(\frac{1}{r^3}\big)J^{\pa_r}[X_w\psi]\cdot n.\\*
\Big|\phi^5\Big(\frac{1}{r}-\frac{3m}{r^2}\Big)(\pa_t(X_w\psi))^2\Big| \leq O\big(\frac{1}{r^3}\big)J^{\pa_r}[X_w\psi]\cdot n.\\*
\Big|2\phi^3(r-3m)\pa_t^2\psi\>\pa_r(X_w\psi)\Big| \leq O\Big(\frac{1}{r^2}\Big)J^{\pa_r}[X_w\psi]\cdot n + O\Big(\frac{1}{r^3}\Big)(\pa_t^2\psi)^2.
\end{gather*}
It follows that
$$\Big|\phi\nabla \cdot J^{\pa_r}[X_w\psi]\Big| \leq \frac{4}{r}J^{\pa_r}[X_w\psi]\cdot n + O\big(\frac{1}{r^2}\big)J^{\pa_r}[X_w\psi]\cdot n + O\big(\frac{1}{r^3}\big)(\pa_t^2\psi)^2,$$
and so by \eqref{eq:divergencecoarea2} we have
\begin{multline*}
E[X_w\psi](r_2) \leq E[X_w\psi](r_1) + \int_{r_1}^{r_2}\Big[\frac{4}{r}+O\big(\frac{1}{r^2}\big)\Big]E[X_w\psi](r)\de r\\*
+ \int_{r_1}^{r_2}O\big(\frac{1}{r^3}\big)\int_{\Sigma_r}(\pa_t^2\psi)^2 \de \mu_{\ol{g}_r}\de r.
\end{multline*}
An application of the elliptic estimate from Corollary \ref{cor:ellipticestSdS} along with \eqref{eq:laplacianest} yields
\begin{equation*}
O\big(\frac{1}{r^3}\big)\int_{\Sigma_r}(\pa_t^2\psi)^2 \de \mu_{\ol{g}_r} \lesa O\big(\frac{1}{r^4}\big)E[X_w\psi](r) + O\big(1)E[\psi](r),
\end{equation*}
and therefore there exists a constant $C > 0$ such that
$$E[X_w\psi](r_2)\leq E[X_w\psi](r_1)+C\int_{r_1}^{r_2}E[\psi](r)\de r + \int_{r_1}^{r_2}\beta(r)E[X_w\psi](r)\de r,$$
where 
$$\beta(r) = \frac{4}{r} + \frac{C}{r^2}$$
for some large constant $C$. We then use the fact from \cite{Sch15} that the natural energy $E[\psi]$ is monotonically decreasing in $r$ to bound
$$\int_{r_1}^{r_2}E[\psi](r)\de r \leq r_2 E[\psi](r_1).$$
If we set $\alpha(r) = E[X_w\psi](r_1) + C r E[\psi](r_1)$, then we have for all $r_2 \geq r_1$ that
$$E[X_w\psi](r_2) \leq \alpha(r_2) + \int_{r_1}^{r_2} \beta(r) E[X_w\psi](r) \de r,$$
which is a Gr\"onwall-type inequality, and implies
$$E[X_w\psi](r_2) \leq \alpha(r_2) +\int_{r_1}^{r_2} \alpha(s)\beta(s)\exp\Big[\int_s^{r_2}\beta(r)\de r\Big] \de s.$$
Finally we bound
\begin{equation*}
\exp\Big[\int_s^{r_2}\frac{4}{r} + \frac{C}{r^2}\Big]\de r =\exp\Big[4\log(r_2)-4\log(s)\Big]\exp\Big[\int_s^{r_2}\frac{C}{r^2}\de r\Big] \lesa \frac{r_2^4}{s^4},
\end{equation*}
which implies
\begin{align*}
\int_{r_1}^{r_2} \alpha(s)\beta(s)\exp\Big[\int_s^{r_2}\beta(r)\de r\Big] \de s &\lesa r_2^4\int_{r_1}^{r_2} \frac{E[X_w\psi](r_1)}{s^5} + \frac{E[\psi](r_1)}{s^4}\de s\\*
&\lesa r_2^4 \Big[E[X_w\psi](r_1) + E[\psi](r_1)\Big].
\end{align*}
This implies \eqref{eq:hoestimateweak} after dividing through by $r_2^4$.
\end{proof}
\begin{remark}
The auxillary energy estimate \eqref{eq:hoestimateweak} is weaker than that given in Theorem~\ref{thm:hoestimate} in the following way. Writing the commuted energy from \eqref{eq:hoestimateweak} more explicitly, we have
$$\int_{\Sigma_r}J^{\pa_r}[X_w\psi] \cdot n_{\Sigma_r}\de \mu_{\ol{g}_r} \sim \int_{\RR\times\sph^2} r^4 (\pa_r (r^2\phi^{-2}\pa_r\psi))^2 + r^8 |\tilde{\nabla}\pa_r\psi|^2\>\de t \>\de \mu_{\mr{\gamma}}, $$
and so the auxillary estimate implies that the quantity
$$\frac{1}{r^4}\int_{\Sigma_r}J^{\pa_r}[X_w\psi] \cdot n_{\Sigma_r}\de \mu_{\ol{g}_r} \sim \int_{\RR\times\sph^2} (\pa_r (r^2\phi^{-2}\pa_r\psi))^2 + r^4 |\tilde{\nabla}\pa_r\psi|^2 \>\de t \>\de \mu_{\mr{\gamma}}$$
is bounded. On the other hand, it follows from Theorem~\ref{thm:hoestimate} that
$$\int_{\Sigma_r}J^{\pa_r}[X_s\psi] \cdot n_{\Sigma_r}\de \mu_{\ol{g}_r} \sim \int_{\RR\times\sph^2} (\pa_r (r \phi^{-2}\pa_r\psi))^2 + r^6 |\tilde{\nabla}\pa_r\psi|^2 \>\de t \>\de \mu_{\mr{\gamma}}$$
is bounded, which yields stronger decay of $\pa_r\psi$ with respect to the homogeneous Sobolev norm $\dot{H}^1(\RR\times\sph^2)$.
\end{remark}

\subsection{Second-order energy estimate}\label{sec:secondenergy}
We now prove the second-order estimate of Theorem~\ref{thm:forwardenergyintro}. This estimate follows from a second commutation with the vectorfield of the form $Y = r\phi^{-1}\pa_r$, after already commuting once with $X_s$. The commutator $Y$ once again creates cancellations in various terms that are produced when computing $\square_g (Y(X_s\psi))$.
\begin{proof}[Proof of Theorem~\ref{thm:forwardenergyintro}]
We begin with the inequality
\begin{equation}\label{eq:hhorderdivergenceid}
E[Y(X_s\psi)](r_2) + \int_{r_1}^{r_2}\Big(\int_{\Sigma_r}\phi \nabla \cdot J^{\pa_r}[Y(X_s\psi)]\de \mu_{\ol{g}_r}\Big)\de r \leq E[Y(X_s\psi)](r_1).
\end{equation}
By Lemma~\ref{lem:commutedwaveeq}, for some scalar function $\varphi$ we have
$$\square_g (Y\varphi) = \phi^{-2}\pa_r^2(r\phi^{-1})\pa_r \varphi + 2\phi^3\Big(1-\frac{3m}{r}\Big)\pa_t^2\varphi + 2\phi^{-1}\square_g \varphi + Y(\square_g \varphi) .$$
Setting $\varphi = X_s\psi$, it follows that
\begin{multline}\label{eq:hhordercom}
\square_g (Y(X_s\psi)) = \phi^{-2}\pa_r^2(r\phi^{-1})(X_s\psi) + 2\phi^3\Big(1-\frac{3m}{r}\Big)\pa_t^2(X_s\psi) \\*+2\phi^{-1}\Big[-\Big(\frac{2\Lambda}{3}r - \frac{2m}{r^2}\Big)\pa_r(X_s\psi) + 2\phi^2\Big(1-\frac{3m}{r}\Big)\pa_t^2\psi\Big]\\*
+ Y\Big[-\Big(\frac{2\Lambda}{3}r - \frac{2m}{r^2}\Big)\pa_r(X_s\psi) + 2\phi^2\Big(1-\frac{3m}{r}\Big)\pa_t^2\psi\Big],
\end{multline}
where we used the commutation identity \eqref{eq:commutedeqstrong} for the commutator $X_s$. We expand the final term giving
\begin{multline*}
Y\Big[-\Big(\frac{2\Lambda}{3}r - \frac{2m}{r^2}\Big)\pa_r(X_s\psi) + 2\phi^2\Big(1-\frac{3m}{r}\Big)\pa_t^2\psi\Big]\\*
= -\phi^{-1}\Big(\frac{2\Lambda}{3}r^2 - \frac{2m}{r}\Big)\pa_r^2(X_s\psi) - \phi^{-1}\Big(\frac{2\Lambda}{3}r - \frac{4m}{r^2}\Big)\pa_r(X_s\psi)\\*
+2\phi^3\Big(1-\frac{3m}{r}\Big)\pa_t^2(X_s\psi) +  2r\phi^{-1}\pa_r\Big[\phi^2\Big(1-\frac{3m}{r}\Big)\Big]\pa_t^2\psi.
\end{multline*}
Since $Y = r\phi^{-1}\pa_r$, we have
$$\pa_r^2(X_s\psi) = \frac{\phi}{r}\pa_r(Y(X_s\psi)) - \frac{\phi}{r}\pa_r(r\phi^{-1})\pa_r(X_s\psi).$$
It follows from \eqref{eq:hhordercom} that
\begin{multline}\label{eq:hhordercom2}
\square_g(Y(X_s\psi)) =-\Big(\frac{2\Lambda}{3}r - \frac{2m}{r^2}\Big)\pa_r(Y(X_s\psi))+4\phi^3\Big(1-\frac{3m}{r}\Big)\pa_t^2(X_s\psi)\\* 
+\Big[2r\phi^{-1}\pa_r\Big[\phi^2\Big(1-\frac{3m}{r}\Big)\Big]+2\phi\Big(1-\frac{3m}{r}\Big)\Big] \pa_t^2\psi\\*
+\Big[\phi^{-2}\pa_r^2(r\phi^{-1})+\pa_r(r\phi^{-1})\Big(\frac{2\Lambda}{3}r-\frac{2m}{r^2}\Big)-\phi^{-1}\Big(\frac{6\Lambda}{3}r-\frac{8m}{r^2}\Big)\Big]\pa_r(X_s\psi).
\end{multline}
We now observe that the coefficient of the $\pa_r(X_s\psi)$ in \eqref{eq:hhordercom2} vanishes to leading order, as
\begin{gather*}
\phi^{-2}\pa_r^2(r\phi^{-1})= \frac{2}{r\phi^3} + O(1).\\*
\pa_r(r\phi^{-1})\Big(\frac{2\Lambda}{3}r-\frac{2m}{r^2}\Big) = \frac{4}{r\phi^3}+O(1).\\*
-\phi^{-1}\Big(\frac{6\Lambda}{3}r-\frac{8m}{r^2}\Big) = -\frac{6}{r\phi^3}+O(1).
\end{gather*}
Therefore the coefficient of the $\pa_r(X_s\psi)$ term is of order $O(1)$. This is also true for the coefficient of the $\pa_t^2\psi$ term, as
\begin{gather*}
2r\phi^{-1}\pa_r\Big[\phi^2\Big(1-\frac{3m}{r}\Big)\Big] = -2\phi + O\big(\frac{1}{r^2}\big)\\*
2\phi\Big(1-\frac{3m}{r}\Big) = 2\phi + O\big(\frac{1}{r^2}\big),
\end{gather*}
and so the coefficient of the $\pa_t^2\psi$ term is of order $O(r^{-2})$. If we keep track of the leading-order behaviour of the coefficients of the other terms as well, we get
\begin{multline}\label{eq:hhordercom3}
\phi\pa_r(Y(X_s\psi))\square_g(Y(X_s\psi)) = \\*-\phi\Big(\frac{2\Lambda}{3}r - \frac{2m}{r^2}\Big)[\pa_r(Y(X_s\psi))]^2 +O\big(\frac{1}{r}\big)\pa_r(Y(X_s\psi))\pa_r(X_s\psi)\\*
+O\big(\frac{1}{r^4}\big)\pa_r(Y(X_s\psi))\pa_t^2(X_s\psi) + O\big(\frac{1}{r^3}\Big)\pa_r(Y(X_s\psi))\pa_t^2\psi.
\end{multline}
We restate that
$$\phi K^{\pa_r}[Y(X_s\psi)] = \phi\Big(\frac{2\Lambda}{3}r - \frac{1}{r} + \frac{m}{r}\Big)[\pa_r(Y(X_s\psi))]^2 + \phi^5\Big(\frac{1}{r}-\frac{3m}{r^2}\Big)[\pa_t(Y(X_s\psi))]^2,$$
and point out that the coefficients of the $[\pa_r(Y(X_s\psi))]^2$ terms found above and in \eqref{eq:hhordercom3} cancel to leading order. This means that essentially all terms now become lower-order error terms that we may control with lower-order energy estimates, elliptic estimates and a Gr\"onwall-type inequality, similar to in Theorem~\ref{thm:hoestimate}. Thus we have
\begin{multline}\label{eq:hhorder4}
\phi \nabla\cdot J^{\pa_r}[Y(X_s\psi)] = O\big(\frac{1}{r^2}\big)[\pa_r(Y(X_s\psi))]^2 + O\big(\frac{1}{r^6}\big)[\pa_t(Y(X_s\psi))]^2\\*
+ O\big(\frac{1}{r}\big)\pa_r(Y(X_s\psi))\pa_r(X_s\psi))\\*
+O\big(\frac{1}{r^4}\big)\pa_r(Y(X_s\psi))\pa_t^2(X_s\psi) + O\big(\frac{1}{r^3}\big)\pa_r(Y(X_s\psi))\pa_t^2\psi.
\end{multline}
We now bound all terms on the right hand side of \eqref{eq:hhorder4} pointwise by energy energy currents, if possible. We have
\begin{gather*}
\frac{1}{r^2}[\pa_r(Y(X_s\psi))]^2 \leq O\big(\frac{1}{r^3}\big)J^{\pa_r}[Y(X_s\psi)]\cdot n.\\*
\frac{1}{r^6}[\pa_t(Y(X_s\psi))]^2 \leq O\big(\frac{1}{r^3}\big)J^{\pa_r}[Y(X_s\psi)]\cdot n.\\*
\frac{1}{r}\Big|\pa_r(Y(X_s\psi))\pa_r(X_s\psi))\Big| \leq O\big(\frac{1}{r^2}\big)J^{\pa_r}[Y(X_s\psi)]\cdot n + O\big(\frac{1}{r^2}\big)J^{\pa_r}[X_s\psi]\cdot n.\\*
\frac{1}{r^4}\Big|\pa_r(Y(X_s\psi))\pa_t^2(X_s\psi)\Big| \leq O\big(\frac{1}{r^2}\big)J^{\pa_r}[Y(X_s\psi)]\cdot n + O\big(\frac{1}{r^7}\big)(\pa_t^2(X_s\psi))^2.\\*
\frac{1}{r^3}\Big|\pa_r(Y(X_s\psi))\pa_t^2\psi\Big| \leq O\big(\frac{1}{r^2}\big)J^{\pa_r}[Y(X_s\psi)]\cdot n + O\big(\frac{1}{r^5}\big)(\pa_t^2\psi)^2.
\end{gather*}
It follows from \eqref{eq:hhorderdivergenceid} and \eqref{eq:hhorder4} that

\begin{multline}\label{eq:hhorder5}
E[Y(X_s\psi)](r_2) \leq E[Y(X_s\psi)](r_1) + \int_{r_1}^{r_2}O\big(\frac{1}{r^2}\big)E[Y(X_s\psi)](r)\de r\\*
+\int_{r_1}^{r_2}O\big(\frac{1}{r^2}\big)E[X_s\psi](r)\Big]\de r
+\int_{r_1}^{r_2}O\big(\frac{1}{r^7}\big)\Big[\int_{\Sigma_r}(\pa_t^2(X_s\psi))^2\de \mu_{\ol{g}_r}\Big] \de r\\*
+\int_{r_1}^{r_2}O\big(\frac{1}{r^5}\big)\Big[\int_{\Sigma_r}(\pa_t^2\psi)^2\de \mu_{\ol{g}_r} \Big]\de r.
\end{multline}
The final term can be controlled by $O(1/r^2)[E[X_s\psi](r) + E[\psi](r)]$ via an elliptic estimate like in Theorem~\ref{thm:hoestimate}. For the second-to-last term, another elliptic estimate gives
$$\frac{1}{r^7}\int_{\Sigma_r} (\pa_t^2(X_s\psi))^2\de \mu_{\ol{g}_r} \leq O\big(\frac{1}{r^3}\big)\int_{\Sigma_r}(\ol{\Delta}(X_s\psi))^2 \de \mu_{\ol{g}_r} + O\big(\frac{1}{r^7}\big)\sum_{i=1}^3\int_{\Sigma_r}|\pa_i(X_s\psi)|^2 \de \mu_{\ol{g}_r}.$$
The final term can be controlled by $O(r^{-4})E[X_s\psi](r)$. Moreover, observe that
\begin{multline*}
\ol{\Delta}(X_s\psi) = \square_g (X_s\psi) + \frac{1}{r^2}\pa_r(r^2\phi^{-2}\pa_r(X_s\psi))\\*
= -\Big(\frac{2\Lambda}{3}r - \frac{2m}{r^2}\Big)\pa_r(X_s\psi) + 2\phi^2\Big(1-\frac{3m}{r}\Big)\pa_t^2\psi \\*+ \frac{1}{r\phi}\pa_r(Y(X_s\psi)) + \frac{1}{r\phi}\pa_r(r\phi^{-1})\pa_r(X_s\psi),
\end{multline*}
and therefore
\begin{equation*}
\frac{1}{r^3}(\ol{\Delta}(X_s\psi))^2\leq O\big(\frac{1}{r^2}\big)J^{\pa_r}[X_s\psi]\cdot n + O\big(\frac{1}{r^7}\big)(\pa_t^2\psi)^2+O\big(\frac{1}{r^4}\big)J^{\pa_r}[Y(X_s\psi)]\cdot n.
\end{equation*}
We estimate each of these terms as before, giving
\begin{equation*}
\frac{1}{r^3}\int_{\Sigma_r}(\ol{\Delta}(X_s\psi))^2 \de \mu_{\ol{g}_r} \leq O\big(\frac{1}{r^2}\big)\Big\{E[Y(X_s\psi)](r)+E[X_s\psi](r)+E[\psi](r)\Big\}.
\end{equation*}
Therefore
\begin{multline}\label{eq:hhorder6}
E[Y(X_s\psi)](r_2) \leq E[Y(X_s\psi)](r_1)+ \int_{r_1}^{r_2}O\big(\frac{1}{r^2}\big)E[Y(X_s\psi)](r)\de r\\*
+\int_{r_1}^{r_2}O\big(\frac{1}{r^2}\big)\Big\{E[X_s\psi](r)+E[\psi](r)\Big\}\de r.
\end{multline}
An application of Theorem~\ref{thm:hoestimate} implies
$$\int_{r_1}^{r_2}\frac{1}{r^2}\Big\{E[X_s\psi](r)+E[\psi](r)\Big\}\de r \lesa E[X_s\psi](r_1) + E[\psi](r_1).$$
The result now follows by a Gr\"onwall inequality, and the fact that the factor $r^{-2}$ is integrable.
\end{proof}
\subsection{Pointwise decay estimates}

For sufficiently regular solutions to $\square_g \psi = 0$, the energy estimates from Theorem~\ref{thm:hoestimate} implies pointwise decay of higher derivatives of $\psi$. We return to the argument of Section~\ref{sec:pointwisescattering} that takes advantage of the Killing vectorfields of Schwarzschild-de Sitter, as well as the Sobolev estimate from Appendix~\ref{sec:sobolev}. To this end, recall the notation that given a multiindex $I = (I_1,I_2,I_3,I_4)$, $Z^I$ denotes the differential operator
$$Z^I = \Omega_1^{I_1}\Omega_2^{I_2}\Omega_3^{I_3}T^{I_4},$$
where $T,\Omega_i:i=1,2,3$ are Killing vector fields that span the tangent space of the $\Sigma_r$ hypersurfaces.
\begin{corollary}
Let $\psi$ be a solution to the linear wave equation $\square_g \psi = 0$, with finite higher-order energy on an initial hypersurface $\Sigma_{r_0}$,
$$D[\psi](r_0) = \sum_{|I|\leq 2}\Big\{E[Y(X_s(Z^I\psi))](r_0) + E[X_s(Z^I\psi)](r_0)+E[Z^I\psi](r_0)\Big\} < \infty.$$
Then we have the following pointwise bounds for large $r$:
\begin{gather}
\sup_{\Sigma_r}|\pa_r\psi| \lesa \frac{1}{r^3}(D[\psi](r_0))^{1/2}.\label{eq:higherorderpw1}\\*
\sup_{\Sigma_r}|\tilde{\nabla}(X_s\psi)| \lesa (D[\psi](r_0))^{1/2}.\label{eq:higherorderpw2}\\*
\sup_{\Sigma_r}|\pa_r(X_s\psi)| \lesa \frac{1}{r^2}(D[\psi](r_0))^{1/2}.\label{eq:higherorderpw3}\\*
\sup_{\Sigma_r}|\tilde{\nabla}(Y(X_s\psi))|\lesa (D[\psi](r_0))^{1/2}.\label{eq:higherorderpw4}
\end{gather}
\end{corollary}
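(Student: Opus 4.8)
The plan is to reduce all four pointwise bounds to the higher-order energy estimate of Theorem~\ref{thm:forwardenergyintro} by commuting with the Killing vectorfields $T,\Omega_i$ and then invoking the weighted Sobolev embedding on $\Sigma_r$ from Appendix~\ref{sec:sobolev}. The starting observation is that the operators $Z^I=\Omega_1^{I_1}\Omega_2^{I_2}\Omega_3^{I_3}T^{I_4}$ are built from vectorfields tangent to $\Sigma_r$ whose coefficients are independent of $r$; since the coefficients of $X_s=r\phi^{-2}\pa_r$ and $Y=r\phi^{-1}\pa_r$ depend only on $r$, we have $[Z^I,X_s]=[Z^I,Y]=0$. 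Moreover $T,\Omega_i$ commute with $\square_g$, so each $Z^I\psi$ is again a solution and $Y(X_s(Z^I\psi))=Z^I(Y(X_s\psi))$, and so on. First, then, I would apply Theorem~\ref{thm:forwardenergyintro} to $Z^I\psi$ for every $|I|\leq2$ and sum, to propagate the initial bound: for all $r\geq r_0$,
$$\sum_{|I|\leq2}\Big\{E[Y(X_s Z^I\psi)](r)+E[X_s Z^I\psi](r)+E[Z^I\psi](r)\Big\}\lesa_{\Lambda,m,r_0} D[\psi](r_0).$$

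Next I would convert each energy flux into the $L^2(\Sigma_r)$ quantities appearing in Proposition~\ref{SobEst}, which controls $\sup_{\Sigma_r}|f|^2$ by $r^{-3}\sum_{|I|\leq2}\|Z^I f\|_{L^2(\Sigma_r)}^2$. The point is that each term in the density $J^{\pa_r}[\varphi]\cdot n=\tfrac12\big(\phi^{-1}(\pa_r\varphi)^2+\phi^3(\pa_t\varphi)^2+\phi|\slashed\nabla\varphi|^2\big)$ controls a tangential or radial derivative of $\varphi$ up to a definite power of $\phi\sim r^{-1}$. For the tangential quantities \eqref{eq:higherorderpw2} and \eqref{eq:higherorderpw4} I would apply the Sobolev estimate to the components of $\tilde\nabla(X_s\psi)$ and $\tilde\nabla(Y(X_s\psi))$; writing a component as $T(\cdot)$ or $\Omega_i(\cdot)$ and using $Z^I T=TZ^I$, the relevant $L^2(\Sigma_r)$ norms are bounded by $\int_{\Sigma_r}\phi^3(\pa_t(\cdot))^2\,\de\mu_{\ol{g}_r}$, respectively $\int_{\Sigma_r}\phi|\slashed\nabla(\cdot)|^2\,\de\mu_{\ol{g}_r}$, after extracting the constant-on-$\Sigma_r$ weights $\phi^{-3}\sim r^3$, respectively $\phi^{-1}\sim r$ together with $|\slashed\nabla(\cdot)|^2=r^{-2}|\mr{\nabla}(\cdot)|^2$. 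In each case the $r^3$ gained cancels the $r^{-3}$ in the Sobolev weight, giving the stated $O(1)$ bounds. Estimate \eqref{eq:higherorderpw3} is the same computation applied to $\pa_r(X_s\psi)$, now using the $\phi^{-1}(\pa_r(\cdot))^2$ term of $E[X_s Z^I\psi]$: extracting $\phi\sim r^{-1}$ yields $\|\pa_r(X_s Z^I\psi)\|_{L^2(\Sigma_r)}^2\lesa r^{-1}E[X_s Z^I\psi]$, and the Sobolev weight produces the $r^{-2}$ decay.

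The main obstacle is the first bound \eqref{eq:higherorderpw1}, where the claimed rate $r^{-3}$ is \emph{sharper} than what the scheme above produces directly: applied to $\pa_r\psi$ with $E[Z^I\psi]$ it gives only $\sup_{\Sigma_r}|\pa_r\psi|\lesa r^{-2}(D[\psi](r_0))^{1/2}$, since the higher-order energy controls $\pa_r(X_s\psi)$ rather than $\pa_r\psi$ itself. To recover the extra power I would instead use the algebraic identity $\pa_r\psi=\tfrac{\phi^2}{r}X_s\psi$ with $\tfrac{\phi^2}{r}\sim r^{-3}$, so that \eqref{eq:higherorderpw1} follows once I establish the \emph{uniform} pointwise bound $\sup_{\Sigma_r}|X_s\psi|\lesa_{\Lambda,m,r_0}(D[\psi](r_0))^{1/2}$. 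The delicacy is that the energies are homogeneous and so do not bound the undifferentiated field $X_s\psi$. The resolution I propose is a telescoping argument in $r$: by the fundamental theorem of calculus, $\sup_{\Sigma_r}|X_s\psi|\leq\sup_{\Sigma_{r_0}}|X_s\psi|+\int_{r_0}^{r}\sup_{\Sigma_s}|\pa_s(X_s\psi)|\,\de s$, and the integrand is bounded by the already-established estimate \eqref{eq:higherorderpw3}, $\sup_{\Sigma_s}|\pa_s(X_s\psi)|\lesa s^{-2}(D[\psi](r_0))^{1/2}$, which is integrable on $[r_0,\infty)$. This reduces the claim to controlling the single reference quantity $\sup_{\Sigma_{r_0}}|X_s\psi|$, whose tangential $Z^I$-derivatives are governed by $E[X_s Z^I\psi](r_0)\leq D[\psi](r_0)$; the careful bookkeeping of the remaining undifferentiated contribution on $\Sigma_{r_0}$ — equivalently, control of the asymptotic profile $\psi_2=\lim_{r\to\infty}X_s\psi$ — is the one genuinely non-mechanical step, with everything else reducing to weight tracking.
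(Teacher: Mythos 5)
Your overall strategy --- commute with the Killing fields $Z^I$, propagate the higher-order energies via Theorems~\ref{thm:hoestimate} and~\ref{thm:forwardenergyintro}, convert the energy fluxes into $L^2(\Sigma_r)$ norms, and apply the Sobolev estimate of Proposition~\ref{SobEst} --- is exactly the paper's, and your weight-tracking for \eqref{eq:higherorderpw2}, \eqref{eq:higherorderpw3} and \eqref{eq:higherorderpw4} is correct. For \eqref{eq:higherorderpw1} you also correctly see that one must bound $\sup_{\Sigma_r}|X_s\psi|$ uniformly and then divide by $r^3$, and your fundamental-theorem-of-calculus reduction using the integrable rate from \eqref{eq:higherorderpw3} is sound; the paper performs the same reduction at the level of the normalised $L^2(\Sigma_r)$ norms before applying Sobolev, which is equivalent.

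The one place you stop short is the reference term $\sup_{\Sigma_{r_0}}|X_s\psi|$, which you flag as ``genuinely non-mechanical'' and tie to control of the asymptotic profile $\psi_2=\lim_{r\to\infty}X_s\psi$. That is a misreading of where the difficulty lies: no asymptotic information is needed, because $X_s\psi=r\phi^{-2}\pa_r\psi$ is itself a weighted \emph{first derivative} of $\psi$, not an undifferentiated field, and the reference hypersurface sits at the finite radius $r_0$ where all weights are bounded. Applying Proposition~\ref{SobEst} to $\varphi=X_s\psi$ and using $Z^I(X_s\psi)=X_s(Z^I\psi)$ gives
$$\sup_{\Sigma_{r_0}}|X_s\psi|^2\lesa_{r_0}\sum_{|I|\leq2}\|X_s(Z^I\psi)\|_{L^2(\Sigma_{r_0})}^2\lesa_{r_0}\sum_{|I|\leq2}\int_{\Sigma_{r_0}}\phi^{-1}(\pa_r Z^I\psi)^2\,\de\mu_{\ol{g}_{r_0}}\lesa\sum_{|I|\leq2}E[Z^I\psi](r_0)\leq D[\psi](r_0),$$
where the middle step simply absorbs the constant coefficient $r_0\phi^{-2}(r_0)$ into the $r_0$-dependent implicit constant and the last step uses the $\phi^{-1}(\pa_r\cdot)^2$ term of the energy density \eqref{eq:naturalenergy}. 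With this observation your argument closes completely; the ``undifferentiated contribution'' you worry about never arises, because the zeroth-order term $\int\varphi^2$ in the Sobolev estimate is being applied to a quantity that is already controlled by the initial energy.
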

\begin{proof}
Using the Sobolev estimate from Proposition~\ref{SobEst}, we have
\begin{equation*}
\sup_{\Sigma_r}|X_s\psi| \lesa \frac{1}{r^{3/2}}\sum_{|I|\leq 2}\|X_s(Z^I\psi)\|_{L^2(\Sigma_r)},
\end{equation*}
keeping in mind that the differential operators $Z^I$ commute with $X_s$. An application of the fundamental theorem of calculus on $[r_0,r]$ then implies
$$\frac{1}{r^{3/2}}\|X_s(Z^I\psi)\|_{L^2(\Sigma_r)} \lesa \|X_s(Z^I\psi)\|_{L^2(\Sigma_{r_0})} + \int_{r_0}^{r}\frac{1}{r^{3/2}}\|\pa_r (X_s(Z^I\psi))\|_{L^2(\Sigma_r)}\de r.$$
We control the last term with the higher-order energy estimate from Theorem~\ref{eq:hoestimate}. Since $Z^I$ is composed of Killing vector fields, $Z^I\psi$ satisfies the homogeneous wave equation, and so by \eqref{eq:hoestimate} we have
\begin{align*}\frac{1}{r^{3/2}}\|\pa_r (X_s(Z^I\psi))\|_{L^2(\Sigma_r)} &\lesa \frac{1}{r^2}\Big(E[X_s(Z^I\psi)](r)\Big)^{1/2}\\*
&\lesa \frac{1}{r^2}\Big(E[X_s(Z^I\psi)](r_0)+E[Z^I\psi](r_0)\Big)^{1/2}\\*
&\lesa \frac{1}{r^2}\Big(D[\psi](r_0)\Big)^{1/2}.
\end{align*}
Therefore
\begin{align*}
\sup_{\Sigma_r}|X_s\psi|  &\lesa \sum_{|I|\leq 2}\Big\{\|X_s(Z^I\psi)\|_{L^2(\Sigma_{r_0})} + \int_{r_0}^r \frac{1}{r^2}\Big(D[\psi](r_0)\Big)^{1/2}\de r\Big\}\\*
&\lesa \Big(D[\psi](r_0)\Big)^{1/2}.
\end{align*}
Since $X_s \sim r^3 \pa_r$, we divide the above inequality through by $r^3$ and obtain \eqref{eq:higherorderpw1}. As for \eqref{eq:higherorderpw2}, we use the Sobolev estimate again and control the resulting $L^2$-norm by the higher-order energy, so that
\begin{align*}
\sup_{\Sigma_r}|\tilde{\nabla} (X_s\psi)|^2 &\lesa \frac{1}{r^3}\sum_{|I|\leq 2}\|\tilde{\nabla}(X_s\psi)\|_{L^2(\Sigma_r)}^2
\\*&\lesa \sum_{|I|\leq 2} E[X_s(Z^I\psi)](r),
\end{align*}
and finally appeal to the higher-order estimate from Theorem~\ref{thm:hoestimate}. The decay estimates \eqref{eq:higherorderpw3} and \eqref{eq:higherorderpw4} follow in the same manner from the second-order estimate of Theorem~\ref{thm:forwardenergyintro}.
\end{proof}

\subsection{Concluding the proof of Theorem~\texorpdfstring{\ref{thm:forwardasymptotics}}{1.8}}\label{sec:forwardlimits}
We conclude this section by proving the existence of the limits 
\begin{equation}\label{eq:forwardlimitssec3}\psi_0 = \lim_{r\ra\infty}\psi,\quad \psi_2 = -2\lim_{r\ra\infty}[r^3\pa_r\psi],\quad \psi_3 = 3\lim_{r\ra\infty}[r^2\pa_r(r^3\pa_r\psi)]
\end{equation}
with respect to the $\dot{H}^1$-norm on the future boundary $\Sigma^+$, which we recall is diffeomorphic to $\RR\times\sph^2$. This is equivalent to proving that the quantities $\psi,X_s\psi,Y(X_s\psi)$ all have a limit on $\Sigma^+$. The particular constants in front of the limits for $\psi_2$, $\psi_3$ are just normalisation factors that account for repeated differentiation in $r$. We note that the energy controls the $\dot{H}^1$-norm on the $\Sigma_r$ hypersurfaces, meaning there exists a constant $C(\Lambda,m,r_0) > 0$ independent of $r$ such that
\begin{gather*}
\|\psi(r)\|_{\dot{H}^1(\RR\times\sph^2)}^2 \leq C E[\psi](r),
\end{gather*}
for all $r \geq r_0$. To prove the existence of the limits \eqref{eq:forwardlimitssec3}, we appeal to a density argument. Let $(\varphi_{j})_{j\in \NN}$ be a sequence of smooth, compactly supported solutions to the linear wave equation that converge to $\psi$, so that for some $r_0 > \rc$ we have
$$\lim_{j \ra \infty}\Big[ \|\psi-\varphi_j\|_{\pa_r,\Sigma_{r_0}} + \|X_s(\psi-\varphi_j)\|_{\pa_r,\Sigma_{r_0}} + \|Y(X_s(\psi-\varphi_j))\|_{\pa_r,\Sigma_{r_0}} \Big] = 0.$$
This can be accomplished by taking a sequence of smooth compactly supported data on $\Sigma_{r_0}$ that converge to the data of $\psi$ on $\Sigma_{r_0}$, and then considering the corresponding solution to the wave equation. By commuting the energy estimate from Theorem~\ref{thm:forwardenergyintro} with Killing vectorfields, one can then show that for each $j$, the functions $\varphi_j, X_s\varphi_j, Y(X_s\psi)$ each have a pointwise limit on $\Sigma^+$. This pointwise limit extends to a limit with respect to the $\dot{H}^1$-norm via the dominated convergence theorem. Finally, it follows from the energy estimate of Theorem~\ref{thm:hoestimate} that $\lim_{r\ra\infty} \psi$, $\lim_{r\ra\infty} X_s\psi$, and $\lim_{r\ra\infty} Y(X_s\psi)$ all exist with respect to the homogeneous Sobolev norm $\dot{H}^1(\RR\times\sph^2)$.

We finally prove the existence of the limits \eqref{eq:asymptoticlimits} by showing that
\begin{gather}\lim_{r\ra\infty}\|r^2(\psi-\psi_0)-\psi_2\|_{\dot{H}^{1}(\RR\times\sph^2)} = 0.\label{eq:existenceasymptotic1}\\*
\lim_{r\ra\infty}\Big\|r^3\Big(\psi-\psi_0-\frac{\psi_2}{r^2}\Big) - \psi_3\Big\|_{\dot{H}^{1}(\RR\times\sph^2)} = 0.\label{eq:existenceasymptotic2}
\end{gather}
To prove \eqref{eq:existenceasymptotic1}, we know that $\|\psi - \psi_0 - r^{-2}\psi_r\|_{\dot{H}^1} \ra 0$ as $r\ra \infty$, and so by the fundamental theorem of calculus we have
$$\Big\|\psi - \psi_0 - \frac{1}{r^{2}}\psi_2\Big\|_{\dot{H}^1(\RR\times\sph^2)} \leq \int_r^{\infty}\Big\|\pa_r\psi+\frac{2}{r^{3}}\psi_2\Big\|_{\dot{H}^1(\RR\times\sph^2)}\de r,$$
noting that $\pa_r(\psi-\psi_0-r^{-2}\psi_2) = \pa_r\psi+2r^{-3}\psi_2$. We bound
\begin{align*}
\int_r^{\infty}\Big\|\pa_r\psi+\frac{2}{r^{3}}\psi_2\Big\|_{\dot{H}^1(\RR\times\sph^2)}\de r &\leq \int_{r}^{\infty}\frac{1}{r^3}\sup_{[r,\infty)}\|r^3\pa_r\psi + 2\psi_2\|_{\dot{H}^1(\RR\times\sph^2)}\de r\\*
&\leq \frac{1}{2r^2}\sup_{[r,\infty)}\|r^3\pa_r\psi + 2\psi_2\|_{\dot{H}^1(\RR\times\sph^2)}.
\end{align*}
This implies \eqref{eq:existenceasymptotic1}, as
$$\|r^2(\psi-\psi_0)-\psi_2\|_{\dot{H}^1(\RR\times\sph^2)} \leq \frac{1}{2}\sup_{[r,\infty)}\|r^3\pa_r\psi + 2\psi_2\|_{\dot{H}^1(\RR\times\sph^2)}.$$
The right hand side vanishes as $r \ra \infty$, hence \eqref{eq:existenceasymptotic1} holds. The limit \eqref{eq:existenceasymptotic2} follows in a similar manner.

\section{Scattering on general expanding spacetimes}\label{sec:perturbations}
We will now generalise the scattering result \textbf{Theorem~\ref{thm:scatteringwithouthorizon}} to the linear wave equation on a wide class of expanding spacetimes. No symmetries are required on the expanding region, and the geometry at infinity of these spacetimes can differ significantly from Schwarzschild-de Sitter. In particular the expanding region of Kerr-de Sitter is contained in the class of spacetimes we consider, so we prove existence and uniqueness of scattering solutions to the linear wave equation on Kerr-de Sitter.

\begin{remark}
Wave equations on another class of perturbations of Schwarzschild-de Sitter were also considered in \cite{Sch15}, and the redshift estimate \eqref{eq:redshift0} for the forward problem was shown to generalise to those perturbations. We note the perturbed spacetimes considered in \cite{Sch15} are in a more restricted class compared to those that we consider in this paper, as their metric components are assumed to converge at infinity to their counterparts on Schwarzschild-de Sitter.
\end{remark}

For a given spacetime, we will construct asymptotic solutions to the wave equation which are specialised to that spacetime. We note that for the class of perturbations of Schwarzschild-de Sitter we consider, logarithmic terms are present in the asymptotic solution, see Remark~\ref{rmk:logterms}. However we will show that for a restriction of the perturbed class that includes Kerr-de Sitter, logarithmic terms are not present.

We will then prove a weighted energy estimate suitable for the backward problem, extending Proposition~\ref{prop:backwardenergyestimate} to this large class of spacetimes. The existence of scattering solutions from Theorem~\ref{thm:pertscatteringinformal} then follows by repeating the argument from Section~\ref{sec:scatteringinterior}, but in a perturbed setting.

We define the following class of spacetimes:
\begin{definition}\label{def:perturbations}
Let $\Sigma^+$ denote a 3-dimensional Riemannian manifold, equipped with a Riemannian $C^3$-metric $h$, and fix $\Lambda > 0$. Let $M$ denote the differentiable manifold $M = (0,\rho_0)\times \Sigma^+$ for some $\rho_0 > 0$, and let $g$ be a Lorentzian $C^3$-metric defined on $M$. 

We say $(M,g) \in \mc{G}_\Lambda^1$ if the conformal metric $\tilde{g} = \rho^2 g$ admits a $C^3$-extension across $\{\rho = 0\} \times \Sigma^+$ of the form
\begin{equation}\label{eq:conformalmetricbad}
\tilde{g} = -\frac{3}{\Lambda} \de \rho^2 + h + O(\rho).
\end{equation}
Moreover, we say $(\mc{M},g) \in \mc{G}_{\Lambda}^2$ if the conformal metric $\tilde{g}$ is like
\begin{equation}\label{eq:conformalmetricgood}
\tilde{g} = -\frac{3}{\Lambda} \de \rho^2 + h + O(\rho^2).
\end{equation}
\end{definition}
\begin{remark}
This class of spacetimes is very similar those considered in \cite{Vas10}, where in particular smooth scattering solutions are constructed from infinity. The primary difference here is that we do not assume smoothness of the metric, but instead $C^3$-regularity. This class is also interesting because of its relation to the scattering problem for Einstein's vacuum equations with positive cosmological constant, see for example \cite{Hintz23}.
\end{remark}
\begin{remark}
A straightforward computation shows that de Sitter, Schwarzschild-de Sitter and Kerr-de Sitter spacetimes are all members of $\mc{G}_\Lambda^2$. For de Sitter and Schwarzschild-de Sitter, one can see this by taking the metric in standard coordinates on the expanding region (for example take \eqref{eq:metricstandardcoords} for Schwarzschild-de Sitter), and introducing the coordinate $\rho = r^{-1}$. For Kerr-de Sitter this can be seen by considering the metric in \emph{Boyer-Lindquist} coordinates on the expanding region, and again introducing the coordinate $\rho = r^{-1}$.
\end{remark}
It is simpler for us to do the computations in this section with respect to the conformal metric $\tg$ rather than $g$. For the rest of this section, we assume that unless explicitly stated otherwise, all quantities are defined in terms of $\tg$ and not the original metric.

First, we note that the homogeneous wave equation $\square_g \psi = 0$ is equivalent to the following wave equation on the conformal metric $\tg$:
\begin{equation}\label{eq:conformalwaveequationsimple}
\square_{\tg} \psi = \frac{2}{\rho}\tg^{\rho\nu}\pa_\nu \psi,
\end{equation}
where a subscript or superscript $\rho$ is to be treated as the component in terms of the $\rho$-coordinate and not an abstract index.
If we set $\phi = (-\tg^{\rho\rho})^{-1/2}$, then the future-pointing timelike unit normal $n$ of the level sets of constant $\rho$ is given by \begin{equation}\label{eq:normalvfperturbed}
n^\mu = \phi \tg^{\rho \mu}
\end{equation}
and so we may write
\begin{equation}\label{eq:conformalwaveequation}
\square_{\tg} \psi = \frac{2}{\rho\phi}n\psi.
\end{equation}
Note that for the conformal metric neither the unit normal $n$ nor the lapse $\phi$ possess weights in $\rho$, and both are uniformly continuously differentiable up to $\Sigma^+$.

\subsection{Constructing asymptotic solutions}
In this section we construct asymptotic solutions to the wave equation for metrics in $\mc{G}_{\Lambda}^1$, $\mc{G}_{\Lambda}^2$. While these constructions are similar to their counterparts on Schwarzschild-de Sitter, in the perturbed setting we include logarithmic terms in the asymptotic solution. The wave equation \eqref{eq:conformalwaveequation} can be written as
\begin{multline}\label{eq:conformalwavemetric}
\Big(\square_{\tg} - \frac{2}{\rho\phi}n\Big)\psi = \tg^{\rho\rho}\pa_\rho^2\psi + 2\tg^{\rho i}\pa_\rho \pa_i \psi + \tg^{ij}\pa_i\pa_j \psi\\*
- \tg^{\mu\nu}\Gamma_{\mu\nu}^\rho\pa_\rho \psi - \tg^{\mu\nu}\Gamma_{\mu\nu}^i \pa_i\psi - \frac{2}{\rho\phi}\tg^{\rho\rho}\pa_\rho \psi -\frac{2}{\rho\phi}\tg^{\rho i}\pa_i \psi.
\end{multline}
If the metric belongs to $\mc{G}_\Lambda^1$, then a short computation using \eqref{eq:conformalmetricbad} implies that the asymptotics of the various coefficients that appear above are
\begin{gather}
\tg^{\rho\rho} = -\frac{\Lambda}{3} + O(\rho), \quad \tg^{\rho i} = O(\rho), \quad \tg^{ij} = h^{ij} + O(\rho),\label{eq:wavetermsasympslow1}\\*
\tg^{\mu\nu}\Gamma_{\mu\nu}^\rho = O(1),\quad\tg^{\mu\nu}\Gamma_{\mu\nu}^i = f_{\Sigma^+}^i + O(\rho),\label{eq:wavetermsasympslow2}
\end{gather}
where the $f_{\Sigma^+}^i$ are $C^2$-functions on $\Sigma^+$. If the metric also belongs to $\mc{G}_\Lambda^2$, then by \eqref{eq:conformalmetricgood} the asymptotics are
\begin{gather}
\tg^{\rho\rho} = -\frac{\Lambda}{3} + O(\rho^2), \quad \tg^{\rho i} = O(\rho^2), \quad \tg^{ij} = h^{ij} + O(\rho^2).\label{eq:wavetermsasympfast1}\\*
\tg^{\mu\nu}\Gamma_{\mu\nu}^\rho = O(\rho),\quad\tg^{\mu\nu}\Gamma_{\mu\nu}^i = f_{\Sigma^+}^i+  O(\rho^2).\label{eq:wavetermsasympfast2}
\end{gather}
\begin{lemma}\label{lem:asympperturbed}
Fix a spacetime $(\mc{M},g) \in \mc{G}_{\Lambda}^1$, and let $\psi_0,\psi_{3}$ be sufficiently regular functions on $\Sigma^+$. Then define the asymptotic solution
$$\psi_{\text{asymp}} = \psi_0 + \rho^2\psi_2 + \rho^3\log \rho \>\psi_{3,1} + \rho^3\psi_3,$$
where $\psi_2$,$\psi_{3,1}$ are functions determined entirely by $\psi_0$ and derivatives thereof. Then there exists an integer $k \geq 0$ such that
\begin{equation}\label{eq:asympboundpw1}
\Big|\Big(\square_{\tg} - \frac{2}{\rho\phi}n\Big)(\psi_{\text{asymp}})\Big| \lesa_{\mc{M}} \rho^2 \log \rho \sum_{|\alpha|\leq k}|\nabla_{\Sigma^+}^\alpha \psi_0| + \rho^2\sum_{|\alpha|\leq k}|\nabla_{\Sigma^+}^\alpha \psi_3|,
\end{equation}
where $\nabla_{\Sigma^+}$ is the covariant derivative restricted to $\Sigma^+$. If additionally $(\mc{M},g) \in \mc{G}_{\Lambda}^2$, then the $\psi_{3,1}$ term vanishes identically, and we have
\begin{equation}\label{eq:asympboundpw2}\Big|\Big(\square_{\tg} - \frac{2}{\rho\phi}n\Big)(\psi_{\text{asymp}})\Big| \lesa \rho^2 \sum_{|\alpha|\leq k}|\nabla_{\Sigma^+}^\alpha \psi_0| + \rho^3\sum_{|\alpha|\leq k}|\nabla_{\Sigma^+}^\alpha \psi_3|
\end{equation}
\end{lemma}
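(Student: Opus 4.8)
The plan is to mirror the Schwarzschild-de Sitter computation of Lemmas~\ref{lem:waveeqmonomial} and~\ref{lem:recurrencerelations}, working now with the conformal operator $P \doteq \square_{\tg} - \frac{2}{\rho\phi}n$ expanded in \eqref{eq:conformalwavemetric} together with the coefficient asymptotics \eqref{eq:wavetermsasympslow1}--\eqref{eq:wavetermsasympfast2}. First I would prove a perturbed analogue of Lemma~\ref{lem:waveeqmonomial}, applying $P$ to a single monomial. Since $\tg^{\rho\rho} = -\LT + O(\rho)$, the term $\tg^{\rho\rho}\pa_\rho^2$ and the $\pa_\rho$-part of $\frac{2}{\rho\phi}n$ reproduce precisely the indicial polynomial, yielding
$$P(\rho^n\psi_n) = -\LT\, n(n-3)\,\rho^{n-2}\psi_n + (\text{lower order}),$$
$$P(\rho^n\log\rho\,\psi_{n,1}) = -\LT\, n(n-3)\log\rho\,\rho^{n-2}\psi_{n,1} - \LT(2n-3)\rho^{n-2}\psi_{n,1} + (\text{lower order}),$$
in exact correspondence with \eqref{eq:waveeqmonomial}--\eqref{eq:waveeqmonomiallog} under $r^{-1}\leftrightarrow\rho$. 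The essential structural fact is unchanged: the indicial roots are $n=0$ and $n=3$, so $\psi_0,\psi_3$ are the free data and every other coefficient is determined. What changes relative to Schwarzschild-de Sitter is the source of the ``lower order'' terms, which now come from the metric perturbation and are therefore $O(\rho)$ for $(\mc{M},g)\in\mc{G}_\Lambda^1$ and $O(\rho^2)$ for $(\mc{M},g)\in\mc{G}_\Lambda^2$.

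Next I would substitute $\psi_{\text{asymp}} = \psi_0 + \rho^2\psi_2 + \rho^3\log\rho\,\psi_{3,1} + \rho^3\psi_3$ and collect by powers of $\rho$ and $\log\rho$. At order $\rho^0$ only $P\psi_0$ and $P(\rho^2\psi_2)$ contribute; the indicial factor at $n=2$ is $-\LT\cdot2\cdot(-1) = \frac{2\Lambda}{3}\neq 0$, so requiring the $\rho^0$ coefficient to vanish determines $\psi_2 = -\frac{3}{2\Lambda}\mc{L}\psi_0$, where $\mc{L}$ is the second-order operator on $\Sigma^+$ assembled from $h^{ij}$ and the functions $f^i_{\Sigma^+}$, exactly as $\psi_2 = -\frac{3}{2\Lambda}\tilde{\Delta}\psi_0$ arose in Lemma~\ref{lem:recurrencerelations}. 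At order $\rho^1$ the power $\rho^3\psi_3$ contributes nothing because its indicial factor $n(n-3)$ vanishes at $n=3$, while the logarithmic monomial contributes the non-logarithmic resonance term $-\Lambda\rho\,\psi_{3,1}$; balancing this against the $\rho^1$-order parts of $P\psi_0$ and $P(\rho^2\psi_2)$ fixes $\psi_{3,1}$ as a function of $\psi_0$ alone. This is the precise mechanism by which a logarithm can be generated at the resonant root $n=3$.

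The two classes are then distinguished at this order. For $(\mc{M},g)\in\mc{G}_\Lambda^1$ the $\rho^1$-order forcing from $P\psi_0$ and $P(\rho^2\psi_2)$ is generically nonzero, so $\psi_{3,1}\neq0$ and the $\rho^3\log\rho$ term must be kept (cf. Remark~\ref{rmk:logterms}). For $(\mc{M},g)\in\mc{G}_\Lambda^2$ the improved $O(\rho^2)$ decay of the perturbation removes most of these contributions, and the only surviving candidate at order $\rho^1$ is the cross term $\frac{2}{\rho\phi}n\psi_0 = \frac{2}{\rho}\tg^{\rho i}\pa_i\psi_0$; writing $\square_{\tg}$ in divergence form, its piece $\frac{1}{\sqrt{|\tg|}}\pa_\rho\big(\sqrt{|\tg|}\,\tg^{\rho i}\pa_i\psi_0\big)$ cancels this term at order $\rho^1$, so the total $\rho^1$-forcing vanishes identically and $\psi_{3,1}\equiv 0$. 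Once $\psi_2$ (and $\psi_{3,1}$) are fixed, the residual error is given by the first uncontrolled order in each remaining expansion: $O(\rho^2\log\rho)$ carried by $\psi_0$ and $O(\rho^2)$ carried by $\psi_3$ in $\mc{G}_\Lambda^1$, improving to $O(\rho^2)$ and $O(\rho^3)$ respectively in $\mc{G}_\Lambda^2$. Bookkeeping of how many tangential derivatives of $\psi_0,\psi_3$ enter each coefficient produces the integer $k$ and upgrades the order estimate to the pointwise bounds \eqref{eq:asympboundpw1} and \eqref{eq:asympboundpw2}.

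The main obstacle is the order-$\rho^1$ analysis in the $\mc{G}_\Lambda^2$ case, namely confirming that $\psi_{3,1}$ vanishes exactly rather than merely being of the size of the metric perturbation. The delicate point is the cross term $\tg^{\rho i}$: in the non-divergence form \eqref{eq:conformalwavemetric} the required cancellation is hidden inside the contraction $\tg^{\mu\nu}\Gamma^i_{\mu\nu}$, so it is cleanest to run this step with $\square_{\tg}$ in divergence form, where the contribution $\pa_\rho(\sqrt{|\tg|}\,\tg^{\rho i})$ manifestly cancels the $\frac{2}{\rho}\tg^{\rho i}$ term at the relevant order. This is exactly the computation that isolates the role of the $O(\rho^2)$, as opposed to $O(\rho)$, decay of the conformal metric, and hence of membership in $\mc{G}_\Lambda^2$; it is the analogue here of the observation in Remark~\ref{rmk:logterms} that the vanishing of $\psi_{3,1}$ for Schwarzschild-de Sitter hinges on $\phi^{-2}$ having no $O(r)$ term.
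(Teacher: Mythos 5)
Your proposal is correct and takes essentially the same route as the paper: apply the conformal operator term by term to the monomials $\psi_0,\rho^2\psi_2,\rho^3\log\rho\,\psi_{3,1},\rho^3\psi_3$ using the coefficient asymptotics \eqref{eq:wavetermsasympslow1}--\eqref{eq:wavetermsasympfast2}, kill the $O(1)$ and $O(\rho)$ coefficients to determine $\psi_2$ and $\psi_{3,1}$ from $\psi_0$ (with the nonvanishing indicial factors $\frac{2\Lambda}{3}$ at $n=2$ and $\pm\Lambda$ at the resonant log term doing the work), and read off the remaining error. Your divergence-form treatment of the order-$\rho$ cross term $\frac{2}{\rho}\tg^{\rho i}\pa_i\psi_0$ in the $\mc{G}_\Lambda^2$ case is a welcome extra degree of care: the paper absorbs that cancellation into its stated asymptotics for $\tg^{\mu\nu}\Gamma^i_{\mu\nu}$, whereas you exhibit it explicitly as the mechanism forcing $\psi_{3,1}\equiv 0$.
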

\begin{proof}
We first assume just $(\mc{M},g) \in \mc{G}_{\Lambda}^1$. Then we compute term-by-term. By \eqref{eq:conformalwavemetric}, \eqref{eq:wavetermsasympslow1}, and \eqref{eq:wavetermsasympslow2} we have
\begin{gather*}
\Big(\square_{\tg} - \frac{2}{\rho\phi}n\Big)(\psi_0) = h^{ij}\pa_i\pa_j \psi_0 - f_{\Sigma^+}^i\pa_i \psi + O(\rho),\\* \quad \Big(\square_{\tg} - \frac{2}{\rho\phi}n\Big)(\rho^2\psi_2) = \frac{2\Lambda}{3}\psi_2 + O(\rho).
\end{gather*}
For the remaining terms, we note that in \eqref{eq:conformalwavemetric} the slowest-decaying terms are $\tg^{\rho\rho}\pa_\rho^2\psi$ and $\frac{2}{\rho\phi}\tg^{\rho\rho}$. But for $\rho^3\log \rho\> \psi_{3,1}$, $\rho^3\psi_3$ these terms cancel to leading order, giving
\begin{gather*}
\Big(\tg^{\rho\rho}\pa_\rho^2-\frac{2}{\rho\phi}\tg^{\rho\rho}\Big)(\rho^3 \log \rho \>\psi_{3,1}) = \Lambda\rho + O(\rho^2\log \rho),\\*
\Big(\tg^{\rho\rho}\pa_\rho^2-\frac{2}{\rho\phi}\tg^{\rho\rho}\Big)(\rho^3\psi_{3}) = O(\rho^2).
\end{gather*}
Hence we may freely choose $\psi_0$, $\psi_3$, and we set if we set the terms in 
$$\square_{\tg} \psi_{\text{asymp}} = \frac{2}{\rho\phi}n\psi_{\text{asymp}}$$ of order $O(1)$ and $O(\rho)$ equal to zero, then $\psi_2$, $\psi_{3,1}$ are determined entirely by $\psi_0$ and its derivatives, which implies \eqref{eq:asympboundpw1}.

Moreover if we assume $(\mc{M},g)\in\mc{G}_\Lambda^2$, then by \eqref{eq:conformalwavemetric}, \eqref{eq:wavetermsasympfast1}, and \eqref{eq:wavetermsasympfast2} we have instead for $\psi_0$, $\psi_2$,
\begin{gather*}
\Big(\square_{\tg} - \frac{2}{\rho\phi}n\Big)(\psi_0) = h^{ij}\pa_i\pa_j \psi_0 - f_{\Sigma^+}^i\pa_i \psi + O(\rho^2),\\* \quad \Big(\square_{\tg} - \frac{2}{\rho\phi}n\Big)(\rho^2\psi_2) = \frac{2\Lambda}{3}\psi_2 + O(\rho^2).
\end{gather*}
Hence setting the order $O(\rho)$ term in $\square_{\tg} \psi_{\text{asymp}} = \frac{2}{\rho\phi}n\psi_{\text{asymp}}$ to be identically zero implies that $\psi_{3,1}$ = 0, and so \eqref{eq:asympboundpw2} holds.
\end{proof}
\begin{remark}
We compare the Lemma above to the construction of asymptotic solutions on Schwarzschild-de Sitter in Section~\ref{sec:asympsol}. In the notation used in this section, Lemma~\ref{lem:recurrencerelations} and \eqref{eq:boxpsiasymp} the approximate solution on Schwarzschild-de Sitter is
$$\psi_{\text{asymp}} = \psi_0 + \rho^2\psi_2 + \rho^3\psi_3,$$
and $\psi_{\text{asymp}}$ obeys the bound
$$\Big|\Big(\square_{\tg} - \frac{2}{\rho\phi}n\Big)(\psi_{\text{asymp}})\Big| \lesa \rho^2 \sum_{|\alpha|\leq 4}|\nabla_{\Sigma^+}^\alpha \psi_0| + \rho^3\sum_{|\alpha|\leq 2}|\nabla_{\Sigma^+}^\alpha \psi_3|,$$
which is essentially \eqref{eq:asympboundpw2}. Also note that while logarithmic terms may be present in asymptotic solutions in the perturbed setting, the term $\rho \>\psi_1$ cannot be present, even for metrics in $\mc{G}_\Lambda^1$. This is because 
$$\Big(\square_{\tg} - \frac{2}{\rho\phi}n\Big)(\rho \>\psi_1) = O\big(\frac{1}{\rho}\Big),$$
so any asymptotic solution containing $\rho\psi_1$ would only satisfy \eqref{eq:conformalwaveequation} up to order $\rho^{-1}$.
\end{remark}
\begin{remark}
As previously stated, Kerr-de Sitter spacetimes are in the class $\mc{G}_{\Lambda}^2$. Thus by the previous Lemma, asymptotic solutions to the linear wave equation on Kerr-de Sitter are of the form
$$\psi_{\text{asymp}} = \psi_0 + \frac{\psi_2}{r^2} + \frac{\psi_3}{r^3},$$
where $\psi_2$ is determined by $\psi_0$.
\end{remark}

\subsection{Energy estimate for the backward problem on expanding spacetimes}
We now prove a weighted backward estimate in the perturbed setting, analogous to Proposition~\ref{prop:backwardenergyestimate}. We will apply this to solutions to the inhomogeneous wave equation
\begin{equation}\label{waveeqconformalinh}
\square_{\tg}\psi - \frac{2}{\rho\phi}n\psi = F.
\end{equation}
\begin{proposition}\label{prop:weightedenergyperturbed}
Let $M = \frac{1}{\rho^4}n$, and suppose $\psi$ satisfy the inhomogeneous wave equation \eqref{waveeqconformalinh}. Then there exists a constant $C > 0$ such that
$$K^M[\psi] \leq CJ^{M}[\psi]\cdot n.$$
on all of $\tilde{M}$. Furthermore, there exists a (different) constant $B > 0$ such that for all $\rho_1\rho_2\in(0,\rho_0)$ with $\rho_1 < \rho_2$, we have
\begin{equation}\label{eq:bwenergyperturbed}
\int_{\Sigma_{\rho_2}} J^{M}[\psi]\cdot n\>\de \mu_{\gamma_\rho}  \leq B\Big[\int_{\Sigma_{\rho_1}} J^{M}[\psi]\cdot n\>\de \mu_{\gamma_\rho} + \int_{\rho_1}^{\rho_2}\Big(\int_{\Sigma_{\rho}}\frac{1}{\rho^4}F^2\Big)\de \rho\Big],
\end{equation}
where $\Sigma_\rho$ denotes the level sets of $\rho$ in $\tilde{\mc{M}}$.
\end{proposition}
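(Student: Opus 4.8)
The plan is to mirror the structure of the proof of Proposition~\ref{prop:backwardenergyestimate}, but with the explicit Schwarzschild--de Sitter computation replaced by a structural argument adapted to the geometry of $\tg$ near $\{\rho=0\}$. The two ingredients are a pointwise bound on the bulk term and a Gr\"onwall argument; the weight $\rho^{-4}$ in $M$ is chosen precisely so that the dangerous contribution from the conformal wave equation \eqref{waveeqconformalinh} is cancelled.

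\emph{Pointwise bound.} First I would compute the deformation tensor of $M=\rho^{-4}n$. Since $\pa_\alpha\rho=\delta_\alpha^\rho$ we have $\pa^\mu\rho=\tg^{\mu\rho}=\phi^{-1}n^\mu$ by \eqref{eq:normalvfperturbed}, so $\pa^\mu(\rho^{-4})=-4\rho^{-5}\phi^{-1}n^\mu$, and the Leibniz rule gives
\begin{equation*}
{}^{(M)}\pi^{\mu\nu}=\rho^{-4}\,{}^{(n)}\pi^{\mu\nu}-4\rho^{-5}\phi^{-1}n^\mu n^\nu,\qquad K^M[\psi]=\rho^{-4}K^n[\psi]-4\rho^{-5}\phi^{-1}T_{\mu\nu}[\psi]\,n^\mu n^\nu.
\end{equation*}
Because $n$ is a unit timelike vectorfield that extends $C^2$ and nondegenerately up to $\{\rho=0\}$ (by the $C^3$-regularity of $\tg$ and the fact that $\tg^{\rho\rho}=-\LT+O(\rho)$ is bounded away from zero), the current $K^n[\psi]$ is bounded pointwise by $\sum_{|\alpha|=1}|\pa^\alpha\psi|^2$ with a constant uniform on $M$. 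Moreover $T_{\mu\nu}[\psi]\,n^\mu n^\nu=\tfrac12(n\psi)^2+\tfrac12|\ol\nabla\psi|^2\ge 0$ equals $\rho^4\,J^M[\psi]\cdot n$, a positive-definite quadratic form in $\pa\psi$. Discarding the nonpositive $O(\rho^{-5})$ term yields $K^M[\psi]\le\rho^{-4}K^n[\psi]\lesa J^M[\psi]\cdot n$, which is the first claim.

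\emph{The crucial cancellation.} The pointwise bound on $K^M$ alone does not suffice, because the divergence identity $\nabla\cdot J^M[\psi]=K^M[\psi]+(M\psi)(\square_{\tg}\psi)$ together with \eqref{waveeqconformalinh} produces the term $(M\psi)\tfrac{2}{\rho\phi}n\psi=\tfrac{2}{\rho^5\phi}(n\psi)^2$, which is $O(\rho^{-5})$ and hence \emph{not} dominated by $J^M[\psi]\cdot n\sim\rho^{-4}$. The key observation is that this term cancels exactly against the $(n\psi)^2$-part of $-4\rho^{-5}\phi^{-1}T_{\mu\nu}[\psi]n^\mu n^\nu$ in $K^M$, leaving only $-2\rho^{-5}\phi^{-1}|\ol\nabla\psi|^2\le 0$ at order $\rho^{-5}$. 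Consequently, for solutions of \eqref{waveeqconformalinh},
\begin{equation*}
\nabla\cdot J^M[\psi]\le C\,J^M[\psi]\cdot n+(M\psi)F.
\end{equation*}

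\emph{Divergence theorem and Gr\"onwall.} I would then apply the divergence theorem to $J^M[\psi]$ on $\{\rho_1\le\rho\le\rho_2\}$, capping off and passing to the limit as in the proof of Proposition~\ref{prop:backwardenergyestimate} if $\Sigma^+$ is noncompact (the null boundary fluxes are nonnegative and dropped). Writing $\mc E(\rho)=\int_{\Sigma_\rho}J^M[\psi]\cdot n\,\de\mu_{\gamma_\rho}$, the coarea formula with the bounded, nondegenerate lapse $\phi$, together with Young's inequality $(M\psi)F\le\tfrac12\rho^{-4}(n\psi)^2+\tfrac12\rho^{-4}F^2$ to absorb the first summand into $C\,J^M\cdot n$, gives
\begin{equation*}
\frac{\de}{\de\rho}\mc E(\rho)\le C'\mc E(\rho)+\int_{\Sigma_\rho}\frac{1}{\rho^4}F^2\,\de\mu_{\gamma_\rho}.
\end{equation*}
Since $[\rho_1,\rho_2]\subset(0,\rho_0)$ is bounded, Gr\"onwall's inequality yields \eqref{eq:bwenergyperturbed} with $B$ depending only on $C'\rho_0$. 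The main obstacle is precisely the exact $O(\rho^{-5})$ cancellation in the previous paragraph: this is what forces the weight $\rho^{-4}$ and plays the role that the sign computation $K^M\le 0$ played on Schwarzschild--de Sitter. The only remaining point to check is the uniformity of all implied constants up to $\{\rho=0\}$, which follows directly from the $C^3$-regularity in Definition~\ref{def:perturbations}; in particular no structure beyond $\mc{G}_\Lambda^1$ is needed for this estimate.
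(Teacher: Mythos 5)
Your proposal is correct and follows essentially the same route as the paper: the same Leibniz computation of ${}^{(M)}\pi$ for $M=\rho^{-4}n$, the same observation that the $O(\rho^{-5})$ contribution of the deformation tensor is nonpositive and (for solutions of \eqref{waveeqconformalinh}) exactly absorbs the dangerous $\tfrac{2}{\rho\phi}(M\psi)(n\psi)$ term from the conformal wave operator, the same uniform bound on $K^n$ from the $C^3$-regularity of $\tg$ up to $\{\rho=0\}$, and the same Cauchy--Schwarz plus coarea plus Gr\"onwall conclusion. Your phrasing of the cancellation (exact cancellation of the $(n\psi)^2$ parts, leaving $-2\rho^{-5}\phi^{-1}|\ol\nabla\psi|^2\le 0$) is just a more explicit packaging of the paper's bound $-\tfrac{4}{\rho\phi}J^M\cdot n\le-\tfrac{2}{\rho\phi}(M\psi)(n\psi)$.
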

\begin{remark}
We point out that for $\rho_1 < \rho_2$, the hypersurface $\Sigma_{\rho_1}$ lies to the \emph{future} of $\Sigma_{\rho_2}$, and the future boundary $\Sigma^+$ corresponds to $\{\rho = 0\}$. So this is a backwards-in-time estimate just like Proposition~\ref{prop:backwardenergyestimate}.
\end{remark}
\begin{proof}
Recall the definition \eqref{eq:normalvfperturbed} of the timelike unit normal $n$. As $M$ is a multiple of $n$, by the product rule we have
$$\>^{(M)}\pi^{\mu\nu} = n^{(\mu}\tg^{\nu)\lambda} \pa_\lambda(\rho^{-4}) + \rho^{-2}\>^{(n)}\pi^{\mu\nu} = -\frac{4\rho^{-5}}{\phi} n^\mu n^\nu +\rho^{-4}\>^{(n)}\pi^{\mu\nu},$$
which implies
$$K^M[\psi] = -\frac{4}{\rho\phi} J^{M}[\psi]\cdot n + \frac{1}{\rho^4}K^n[\psi].$$
Writing the current explicitly we may bound
$$J^M[\psi]\cdot n = \frac{1}{2}(M\psi)(n\psi) + \frac{1}{2}\frac{\phi^2}{\rho^4}|\ol{\nabla}\psi|^2 \geq \frac{1}{2}(M\psi)(n\psi),$$
and so
$$K^M[\psi] \leq -\frac{2}{\rho\phi}(M\psi)(n\psi) + \frac{1}{\rho^4}K^n[\psi].$$
We then claim that the following bound holds on all of $\tilde{\mc{M}}$:
\begin{equation}\label{eq:redshiftcurrentperturbed}
\phi |K^{n}[\psi]| \leq CJ^{n}[\psi]\cdot n
\end{equation}
The pointwise version of this bound is always true, but the bound being uniform follows from the fact that the components of $\tg$ and the function $\phi$ are continuously differentiable, moreover their derivatives are uniformly bounded across $\{\rho = 0\}$.
It follows from \eqref{eq:redshiftcurrentperturbed} that for the divergence of $J^M[\psi]$ we have
\begin{align*}\phi \nabla \cdot J^M[\psi] &\leq -\frac{2}{\rho}(M\psi)(n\psi) + \frac{\phi}{\rho^4}K^n[\psi]+ \phi(M\psi)\Big(\frac{2}{n\phi}\psi + F\Big)\\*
&\leq \frac{\phi}{\rho^4}CJ^n[\psi]\cdot n + \phi(M\psi)F \sim CJ^M[\psi]\cdot n + (M\psi)F.
\end{align*}
We apply the Cauchy-Schwartz inequality to the $(M\psi)F$ term, so that
$$(M\psi)F \leq \frac{\rho^4}{2}(M\psi)^2 + \frac{1}{2\rho^4}F^2 \lesa CJ^M[\psi]\cdot n + \frac{1}{\rho^4}F^2$$
which yields the bound
\begin{equation}\label{eq:divergenceperturbedbound}
\phi \nabla \cdot J^M[\psi] \lesa CJ^M[\psi]\cdot n + \frac{1}{\rho^4}F^2.
\end{equation}
We apply the divergence theorem to the energy current $J^{M}[\psi]$ on the region bounded in the past by $\Sigma_{\rho_2}$ and to the future by $\Sigma_{\rho_1}$ for $0 < \rho_1 < \rho_2 < P$. We denote this region by $\mc{D}_{\rho_1,\rho_2}$. This gives
$$ \int_{\Sigma_{\rho_2}} J^{M}[\psi] \cdot n \>\de \mu_{\gamma_\rho}= \int_{\Sigma_{\rho_1}} J^{M}[\psi] \cdot n \>\de \mu_{\gamma_{\rho}} + \int_{\mc{D}_{\rho_1,\rho_2}}\nabla \cdot J^{M}[\psi] \>\de \mu_{\tg}=,$$
where $\gamma_{\rho}$ is the Riemannian metric on $\Sigma_\rho$ induced from the conformal metric $\tg$. The particular signs of each integral are due to the fact that $n \sim -\pa_\rho$, i.e. the normal vectorfield is pointing in the direction of \emph{decreasing} $\rho$. We assert that $|\tg|^{1/2} = \phi|\gamma_{\rho}|^{1/2}$, which implies that the volume form $\de \mu_{\tg}$ satisfies
$$\de \mu_{\tg} = \phi \>\de \mu_{\gamma_{\rho}} \wedge \de \rho,$$
and so by the coarea formula we have
\begin{equation}\label{eq:conformalcoarea}
\int_{\Sigma_{\rho_2}} J^{M}[\psi] \cdot n \>\de \mu_{\gamma_{\rho}} = \int_{\Sigma_{\rho_1}} J^{M}[\psi] \cdot n \>\de \mu_{\gamma_{\rho}} + \int_{\rho_1}^{\rho_2}\Big(\int_{\Sigma_{\rho}}\phi\nabla \cdot J^{M}[\psi] \>\de \mu_{\gamma_{\rho}}\Big)\de \rho.
\end{equation}
Inserting \eqref{eq:divergenceperturbedbound} then gives
\begin{multline*}
\int_{\Sigma_{\rho_2}} J^{M}[\psi] \cdot n \>\de \mu_{\gamma_{\rho}} \lesa \int_{\Sigma_{\rho_1}} J^{M}[\psi] \cdot n \>\de \mu_{\gamma_{\rho}} + \int_{\rho_1}^{\rho_2}\Big(\int_{\Sigma_\rho} \frac{1}{\rho^4}F^2 \>\de \mu_{\gamma_\rho}\Big) \>\de \rho\\*
+ C\int_{\rho_1}^{\rho_2}\Big(\int_{\Sigma_\rho} J^M[\psi]\cdot n \>\de \mu_{\gamma_\rho}\Big) \>\de \rho.
\end{multline*}
The result now follows through a Gr\"onwall-type inequality.
\end{proof}
\subsection{Existence and uniqueness of scattering solutions on expanding spacetimes}\label{sec:scatteringperturbed}
We now state the full version of Theorem~\ref{thm:pertscatteringinformal}, proving existence and uniqueness of scattering solutions to the linear wave equation in the class of expanding spacetimes $\mc{G}_\Lambda^1$. Recall from \eqref{eq:energynormnotation} the notation for the energy norm
$$\|\psi\|_{M,\Sigma_\rho} = \int_{\Sigma_\rho} J^M[\psi]\cdot n\> \de \mu_{\gamma_\rho}$$
\begin{theorem}\label{thm:pertscatteringformal}
Fix $\Lambda > 0$, and let $(\mc{M},g) \in \mc{G}_\Lambda^1$ be an expanding spacetime with future boundary $\Sigma^+$. Suppose $\psi_0,\psi_3\in H^k(\Sigma^+)$ for some $k$ sufficiently large. Then there exists a unique solution to the linear wave equation $\square_g \psi = 0$ such that for all $\rho \in (0,\rho_0)$, we have
\begin{equation}\label{eq:scatteringperturbedbound}
\|\psi\|_{M, \Sigma_\rho} \lesa_{\mc{M}} \|\psi_0\|_{H^k(\Sigma^+)} + \|\psi_3\|_{H^k(\Sigma^+)}.
\end{equation}
\end{theorem}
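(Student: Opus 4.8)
The plan is to transplant the argument of Section~\ref{sec:scatteringinterior} to the conformal, perturbed setting, driven by the two ingredients already in place: the asymptotic solution of Lemma~\ref{lem:asympperturbed} and the weighted backward estimate of Proposition~\ref{prop:weightedenergyperturbed}. Fix $\psi_{\text{asymp}} = \psi_0 + \rho^2\psi_2 + \rho^3\log\rho\,\psi_{3,1} + \rho^3\psi_3$ as produced by Lemma~\ref{lem:asympperturbed}, and write $F = \square_{\tg}\psi_{\text{asymp}} - \frac{2}{\rho\phi}n\psi_{\text{asymp}}$ for its error, which by \eqref{eq:asympboundpw1} obeys $|F| \lesa \rho^2\log\rho\sum_{|\alpha|\leq k}|\nabla_{\Sigma^+}^\alpha\psi_0| + \rho^2\sum_{|\alpha|\leq k}|\nabla_{\Sigma^+}^\alpha\psi_3|$. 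For a sequence $\epsilon_n \to 0^+$, let $\psi_{\text{rem}}^n$ solve the inhomogeneous equation \eqref{waveeqconformalinh} with right-hand side $-F$ and trivial Cauchy data on $\Sigma_{\epsilon_n}$, so that $\psi^n := \psi_{\text{asymp}} + \psi_{\text{rem}}^n$ solves $\square_g\psi^n = 0$; each finite problem is well posed because the level sets $\Sigma_\rho$ are spacelike and $\rho$ is a time function for $(\mc{M},g)$.

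The heart of the argument is to show that $(\psi^n)$ is Cauchy in the energy $\|\cdot\|_{M,\Sigma_\rho}$ at each fixed $\rho$. For $\epsilon_m < \epsilon_n$ the difference $v = \psi^m - \psi^n = \psi_{\text{rem}}^m - \psi_{\text{rem}}^n$ solves the homogeneous equation, so applying \eqref{eq:bwenergyperturbed} to $v$ (with vanishing source) on the slab between $\Sigma_{\epsilon_n}$ and $\Sigma_\rho$ gives $\|v\|_{M,\Sigma_\rho}^2 \lesa \|v\|_{M,\Sigma_{\epsilon_n}}^2 = \|\psi_{\text{rem}}^m\|_{M,\Sigma_{\epsilon_n}}^2$, using that $\psi_{\text{rem}}^n$ vanishes on $\Sigma_{\epsilon_n}$. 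A second application, now to $\psi_{\text{rem}}^m$ on the slab between $\Sigma_{\epsilon_m}$ (where its data vanish) and $\Sigma_{\epsilon_n}$, bounds this by the source tail $\int_{\epsilon_m}^{\epsilon_n}\int_{\Sigma_\rho}\rho^{-4}F^2\,\de\mu_{\gamma_\rho}\,\de\rho$. Since $\rho^{-4}F^2 \lesa (\log\rho)^2\sum_{|\alpha|\leq k}|\nabla_{\Sigma^+}^\alpha\psi_0|^2 + \sum_{|\alpha|\leq k}|\nabla_{\Sigma^+}^\alpha\psi_3|^2$ and $(\log\rho)^2$ is integrable at $\rho = 0$, this tail tends to zero as $\epsilon_m,\epsilon_n \to 0$. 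Hence $(\psi^n)$ converges to a limit $\psi$ in $\|\cdot\|_{M,\Sigma_\rho}$ for every $\rho \in (0,\rho_0)$.

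Crucially, passing to the limit uses no symmetry of $(\mc{M},g)$: energy convergence gives convergence of $\psi^n$ in $H^1_{\text{loc}}$, so the limit $\psi$ is a finite-energy (distributional) solution of $\square_g\psi = 0$, and by construction $\psi - \psi_{\text{asymp}}$ carries the decay governed by $F$, so $\psi$ has the prescribed asymptotics. For the quantitative bound \eqref{eq:scatteringperturbedbound} I would split $\|\psi\|_{M,\Sigma_\rho} \leq \|\psi_{\text{asymp}}\|_{M,\Sigma_\rho} + \|\psi_{\text{rem}}\|_{M,\Sigma_\rho}$, control $\|\psi_{\text{asymp}}\|_{M,\Sigma_\rho}$ directly from the explicit form of $\psi_{\text{asymp}}$ and the data norms (the analogue of Lemma~\ref{lem:asympsolboundl2}), and bound the remainder by a final application of \eqref{eq:bwenergyperturbed} on $(0,\rho)$, giving $\|\psi_{\text{rem}}\|_{M,\Sigma_\rho}^2 \lesa \int_0^\rho\int_{\Sigma_{\rho'}}\rho'^{-4}F^2\,\de\mu_{\gamma_{\rho'}}\,\de\rho' \lesa \|\psi_0\|_{H^k}^2 + \|\psi_3\|_{H^k}^2$. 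Uniqueness is then immediate: the difference of two solutions with the same data solves the homogeneous equation with energy vanishing as $\rho \to 0$, and \eqref{eq:bwenergyperturbed} with $F = 0$ forces it to vanish identically (equivalently, $\psi_0 = \psi_3 = 0$ yields $\psi = 0$).

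The main obstacle is securing integrability of the weighted source $\int_0 \rho^{-4}F^2\,\de\rho$, which is exactly the purpose of the construction in Lemma~\ref{lem:asympperturbed}. For metrics merely in $\mc{G}_\Lambda^1$ the logarithmic term $\rho^3\log\rho\,\psi_{3,1}$ cannot be removed, and it is essential that the residual error is only $O(\rho^2\log\rho)$: one then has $\rho^{-4}(\rho^2\log\rho)^2 = (\log\rho)^2$, which sits right at the threshold of integrability at $\rho = 0$, whereas a slower-decaying error would break the entire scheme. A secondary technical point is that \eqref{eq:bwenergyperturbed} is stated for the squared energy with an additive source and an implicit Gr\"onwall constant $B$, so the two applications above must be chained at the level of squared quantities with $B$ tracked through both slabs; this is routine once the integrability is established.
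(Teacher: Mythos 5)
Your proposal is correct and follows essentially the same route as the paper: the same asymptotic solution from Lemma~\ref{lem:asympperturbed}, the same sequence of finite backward problems with trivial data on level sets approaching $\Sigma^+$, the same two-step application of Proposition~\ref{prop:weightedenergyperturbed} to establish the Cauchy property, and the same reliance on the integrability of $(\log\rho)^2$ at $\rho=0$ to close the argument and obtain \eqref{eq:scatteringperturbedbound}. Nothing further is needed.
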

\begin{proof}
Given $\psi_0,\psi_3$, let $\psi_{\text{asymp}}$ be the asymptotic solution constructed in Lemma~\ref{lem:asympperturbed}. Let $\psi_{\text{rem}}^{(P)}$ be a solution to the inhomogeneous wave equation
$$\Big(\square_{\tg} - \frac{2}{\rho\phi}n\Big)\psi_{\text{rem}}^{(P)} = -\Big(\square_{\tg} - \frac{2}{\rho\phi}n\Big)\psi_{\text{asymp}},$$
with trivial data on $\Sigma_P$:
$$\psi_{\text{rem}}^{(P)}|_{\Sigma_P} = n \psi_{\text{rem}}^{(P)}|_{\Sigma_P} = 0.$$
Then taking the difference $v = \psi_{\text{rem}}^{(P_1)} - \psi_{\text{rem}}^{(P_2)}$ for $0 < P_1 < P_2$, one can repeat the proof of existence of scattering solutions from Section~\ref{sec:scatteringinterior}. In particular, we show that for all $\rho \in (0,\rho_0)$, $v \ra 0$ as $P_1,P_2 \ra 0$, where the limit of $v$ is taken with respect to the norm $\|\cdot\|_{n,\Sigma_\rho}$. Here we use the bound \eqref{eq:asympboundpw1} for the asymptotic solution, and the weighted estimate of Proposition~\ref{prop:weightedenergyperturbed}. The key inequality we reach is (with a constant independent of $P_1$, $P_2$):
$$\|v\|_{M,\Sigma_\rho}^2 \leq C\int_{P_1}^{P_2} \Big(\int_{\Sigma_\rho}\frac{1}{\rho^4}\Big[\Big(\square_{\tg} - \frac{2}{\rho\phi}n\Big)\psi_{\text{asymp}}\Big]^2\>\de \mu_{\gamma_{\rho}}\Big)\>\de \rho,$$
for all $\rho > P_2$. We compare this to the corresponding bound \eqref{eq:vest3} in Section~\ref{sec:scatteringinterior}. By the pointwise estimate \eqref{eq:asympboundpw1}, we have the $L^2$-bound
$$\int_{P_1}^{P_2}\int_{\Sigma_\rho}\frac{1}{\rho^4}\Big[\Big(\square_{\tg} - \frac{2}{\rho\phi}n\Big)\psi_{\text{asymp}}\Big]^2 \de \mu_{\gamma_\rho} \de \rho \lesa \int_{P_1}^{P_2} (\log \rho)^2(\|\psi_0\|_{H^k(\Sigma^+)}^2 + \|\psi_3\|_{H^k(\Sigma^+)}^2)\> \de\rho.$$
Since $(\log \rho)^2$ is integrable up to $\rho = 0$, the right hand side above vanishes as $P_1,P_2 \ra 0$, and so $\lim_{P_1,P_2\ra0}\|v\|_{M,\Sigma_\rho} = 0$. Hence there exists $\psi$ such that for each $\rho \in (0,\rho_0)$, we have
$$\lim_{P \ra \infty}\|\psi - \psi_{\text{asymp}} - \psi_{\text{rem}}^{(P)}\|_{M,\Sigma_\rho} = 0.$$
The bound \eqref{eq:scatteringperturbedbound} from bounding
\begin{equation*}\|\psi\|_{n,\Sigma_\rho} \leq \|\psi - \psi_{\text{asymp}} - \psi_{\text{rem}}^{(P)}\|_{M,\Sigma_\rho}  + \|\psi_{\text{asymp}}\|_{n,\Sigma_\rho} + \|\psi_{\text{rem}}^{(P)}\|_{n,\Sigma_\rho}.
\end{equation*}
By \eqref{eq:asympboundpw1} we estimate $\|\psi_{\text{asymp}}\|_{n,\Sigma_\rho} \lesa  \|\psi_0\|_{H^k(\Sigma^+)} + \|\psi_3\|_{H^k(\Sigma^+)}$. To control $\|\psi_{\text{rem}}^{(P)}\|_{n,\Sigma_\rho}$, we apply Proposition~\ref{prop:weightedenergyperturbed} to $\psi_{\text{rem}}^{(P)}$ on the spacetime slab between $\Sigma_P$ and $\Sigma_\rho$, giving
$$\|\psi_{\text{rem}}^{(P)}\|_{n,\Sigma_\rho}^2 \lesa \rho^2 (\|\psi_0\|_{H^k(\Sigma^+)}^2 + \|\psi_3\|_{H^k(\Sigma^+)}^2)\int_{P}^\rho (\log s)^2\>\de s \lesa  \|\psi_0\|_{H^k(\Sigma^+)}^2 + \|\psi_3\|_{H^k(\Sigma^+)}^2,$$
where we used the fact that that $\|\cdot\|_{n,\Sigma_\rho} \sim \rho^2 \|\cdot\|_{M,\Sigma_\rho}$. This implies
$$\|\psi\|_{n,\Sigma_\rho} \leq \|\psi - \psi_{\text{asymp}} - \psi_{\text{rem}}^{(P)}\|_{M,\Sigma_\rho}  + \|\psi_0\|_{H^k(\Sigma^+)} + \|\psi_3\|_{H^k(\Sigma^+)}.$$
Taking the limit $P \ra 0$, the result now follows.
\end{proof}

\appendix
\section{Elliptic Estimates}\label{sec:elliptic}
In this appendix we will state an elliptic estimate which we use to prove the higher-order energy estimates in Section~\ref{sec:forwardasymptotics}. Given a 3-dimensional Riemannian manifold $(\Sigma,\ol{g})$, we denote the metric connection by $\ol{\nabla}$, and the Laplacian $\ol{\nabla}^i \ol{\nabla}_i = \ol{\Delta}$. We also write
$$|\ol{\nabla}^2\psi|^2 = \ol{g}^{ij}\ol{g}^{kl}\ol\nabla_i\ol\nabla_k \psi\ol\nabla_j\ol\nabla_l \psi.$$
We recall the following result from Section~4 of \cite{CK93}; see Remark 4 therein.
\begin{proposition}[Elliptic estimate from \cite{CK93}]
Let $\psi$ be a scalar function on the 3-dimensional Riemannian manifold $(\Sigma,\ol{g})$. Then there exists some positive constant $C$ such that
\begin{equation}\label{eq:ellipticestck93}
\int_{\Sigma} |\ol{\nabla}^2\psi|^2\de \mu_{\ol{g}} \leq C\Big(\int_{\Sigma}(\ol{\Delta}\psi)^2\de \mu_{\ol{g}} + \int_{\Sigma}|\emph{Ric}[\ol{g}]^{ij}\ol{\nabla}_i \psi \ol{\nabla}_j \psi|\de \mu_{\ol{g}}\Big).
\end{equation}
\end{proposition}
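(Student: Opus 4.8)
The plan is to obtain \eqref{eq:ellipticestck93} from the integrated Bochner--Weitzenb\"ock identity for scalar functions, which turns the full Hessian of $\psi$ into the Laplacian plus a curvature correction. First I would record the pointwise Bochner formula on $(\Sigma,\ol{g})$,
\begin{equation*}
\frac{1}{2}\ol{\Delta}|\ol{\nabla}\psi|^2 = |\ol{\nabla}^2\psi|^2 + \ol{g}^{ij}\ol{\nabla}_i\psi\>\ol{\nabla}_j(\ol{\Delta}\psi) + \text{Ric}[\ol{g}]^{ij}\ol{\nabla}_i\psi\>\ol{\nabla}_j\psi.
\end{equation*}
This is a purely local computation: it follows by expanding $\ol{\Delta}|\ol{\nabla}\psi|^2 = 2\ol{g}^{ij}\ol{\nabla}_i\psi\>\ol{\nabla}_j(\ol{\Delta}\psi) + 2|\ol{\nabla}^2\psi|^2 + 2\,\ol{g}^{ij}\ol{\nabla}_i\psi\,(\ol{\nabla}^k\ol{\nabla}_k\ol{\nabla}_j - \ol{\nabla}_j\ol{\nabla}^k\ol{\nabla}_k)\psi$ and commuting the third derivatives through the Ricci identity $\ol{\nabla}_i\ol{\nabla}_j\ol{\nabla}_k\psi - \ol{\nabla}_j\ol{\nabla}_i\ol{\nabla}_k\psi = -\ol{R}_{ijk}{}^{l}\ol{\nabla}_l\psi$, which after tracing produces precisely the $\text{Ric}[\ol{g}]$ term. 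No global hypotheses on $\Sigma$ enter here.

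Next I would integrate this identity over $\Sigma$ against $\de\mu_{\ol{g}}$. The left-hand side is a total divergence, $\ol{\Delta}|\ol{\nabla}\psi|^2 = \ol{\nabla}^i\ol{\nabla}_i|\ol{\nabla}\psi|^2$, so its integral vanishes once boundary contributions are controlled, while the middle term is handled by a single integration by parts,
\begin{equation*}
\int_\Sigma \ol{g}^{ij}\ol{\nabla}_i\psi\>\ol{\nabla}_j(\ol{\Delta}\psi)\,\de\mu_{\ol{g}} = -\int_\Sigma (\ol{\Delta}\psi)^2\,\de\mu_{\ol{g}}.
\end{equation*}
Rearranging yields the exact identity
\begin{equation*}
\int_\Sigma |\ol{\nabla}^2\psi|^2\,\de\mu_{\ol{g}} = \int_\Sigma (\ol{\Delta}\psi)^2\,\de\mu_{\ol{g}} - \int_\Sigma \text{Ric}[\ol{g}]^{ij}\ol{\nabla}_i\psi\>\ol{\nabla}_j\psi\,\de\mu_{\ol{g}},
\end{equation*}
and bounding the last term by its absolute value gives \eqref{eq:ellipticestck93}, in fact with $C=1$.

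The only genuine issue is justifying the two integrations by parts on the noncompact manifold $\Sigma \cong \RR\times\sph^2$, where boundary terms could a priori survive at the ends $\iota^+$ and $\ol{\iota}^+$. The cleanest remedy is to prove the identity first for $\psi \in C_c^\infty(\Sigma)$, for which every boundary integral vanishes identically, and then to extend the resulting inequality to the admissible functions by density in the norm that controls both sides. This is exactly the setting in which the estimate is invoked in Section~\ref{sec:forwardasymptotics}, where it is applied after commuting with Killing fields to smooth compactly supported solutions approximating $\psi$. I therefore expect this density and boundary-term bookkeeping to be the main---and essentially the only---obstacle, the underlying curvature algebra being entirely routine.
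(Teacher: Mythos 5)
Your Bochner-identity argument is correct, and it is in substance the proof behind the result as it appears in the cited source: the paper itself offers no proof of this proposition, merely importing it from Section~4 of Christodoulou--Kleinerman, where it is established exactly as you describe, by integrating the Weitzenb\"ock formula
$\tfrac{1}{2}\ol{\Delta}|\ol{\nabla}\psi|^2 = |\ol{\nabla}^2\psi|^2 + \ol{g}^{ij}\ol{\nabla}_i\psi\,\ol{\nabla}_j(\ol{\Delta}\psi) + \mathrm{Ric}[\ol{g}]^{ij}\ol{\nabla}_i\psi\,\ol{\nabla}_j\psi$
and integrating by parts, which indeed yields the stated inequality with $C=1$. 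You are also right that the only point requiring care is the noncompactness of $\Sigma \cong \RR\times\sph^2$: the identity is exact for $\psi \in C_c^\infty(\Sigma)$, and the extension to the class of functions to which the estimate is actually applied goes through by density in $H^2$, which is available here because the hypersurfaces $\Sigma_r$ have bounded geometry and bounded Ricci curvature, so both sides of the inequality are continuous along an approximating sequence. That bookkeeping is routine but worth stating, as you do; there is no gap in the argument.
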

As a consequence, if we consider the spacelike hypersurfaces $\Sigma_r$ on Schwarzschild-de Sitter, the Ricci curvature of the induced metric $\ol{g}_r$ obeys the bound
$$|\text{Ric}[\ol{g}_r]^{ij}| \sim \frac{1}{r^4}.$$
for sufficiently large $r$. Moreover, one may bound
\begin{align}
|\ol{\nabla}^2\psi|^2 &\lesa \sum_{i,j=1}^3 \ol{g}_r^{ii}\ol{g}_r^{jj}(\pa_i\pa_j\psi)^2 + \sum_{i=1}^3 (\ol{g}_r^{ii})^2(\pa_i\psi)^2\nonumber\\*
&\sim \frac{1}{r^4}\sum_{i,j=1}^3 (\pa_i\pa_j\psi)^2 + \frac{1}{r^4}\sum_{i=1}^3 (\pa_i\psi)^2.\label{eq:ellipticest2}
\end{align}
We note that the final term in \eqref{eq:ellipticest2} can be controlled by the integrand of the final term in \eqref{eq:ellipticestck93}, which allows us to absorb this term into the right hand side of \eqref{eq:ellipticestck93}. Thus Proposition~\eqref{eq:ellipticestck93} implies the following Corollary:
\begin{corollary}\label{cor:ellipticestSdS}
Let $(\mc{M}_{\Lambda,m},g)$ be a subextremal member of the Schwarzschild-de Sitter family of spacetimes, and fix $r_0 \geq \rc$, and let $\psi$ be a scalar field on the expanding region $\mc{R}^+$. Then for all $r \geq r_0$, we have the following elliptic estimate:
\begin{equation}
\sum_{i,j=1}^3\int_{\Sigma_r} (\pa_i\pa_j\psi)^2\de \mu_{\ol{g}_r} \lesa_{\Lambda,m,r_0} C\Big(r^4\int_{\Sigma_r}(\ol{\Delta}\psi)^2\de \mu_{\ol{g}_r} + \sum_{i=1}^3\int_{\Sigma_r}(\pa_i\psi)^2\de \mu_{\ol{g}_r}\Big).
\end{equation}
\end{corollary}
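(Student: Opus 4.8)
The plan is to deduce the estimate directly from the elliptic estimate \eqref{eq:ellipticestck93} of \cite{CK93}, applied on the spacelike hypersurface $(\Sigma_r,\ol{g}_r)$, by carefully tracking the powers of $r$ carried by the induced metric and its curvature. The two geometric inputs I need beyond \eqref{eq:ellipticestck93} are the asymptotics $|\text{Ric}[\ol{g}_r]^{ij}|\sim r^{-4}$ and the comparison between the coordinate Hessian $\pa_i\pa_j\psi$ and the covariant Hessian $\ol{\nabla}^2\psi$ recorded in \eqref{eq:ellipticest2}; both are already established in the paragraphs preceding the statement.

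First I would bound the curvature term on the right-hand side of \eqref{eq:ellipticestck93}. Since $\psi$ is a scalar, $\ol{\nabla}_i\psi=\pa_i\psi$, and using $|\text{Ric}[\ol{g}_r]^{ij}|\lesa r^{-4}$ I estimate
\[
\int_{\Sigma_r}\big|\text{Ric}[\ol{g}_r]^{ij}\ol{\nabla}_i\psi\,\ol{\nabla}_j\psi\big|\,\de\mu_{\ol{g}_r}\lesa_{\Lambda,m,r_0}\frac{1}{r^4}\sum_{i=1}^3\int_{\Sigma_r}(\pa_i\psi)^2\,\de\mu_{\ol{g}_r}.
\]
After the eventual multiplication by $r^4$ this is precisely of the size of the second term on the right of the Corollary, so it can simply be carried along.

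Next I would relate the left-hand side of the Corollary to the covariant Hessian. Writing $\pa_i\pa_j\psi=\ol{\nabla}_i\ol{\nabla}_j\psi+\Gamma_{ij}^k\pa_k\psi$ and noting that the Christoffel symbols of $\ol{g}_r$ are $O(1)$ (they arise only from the $\theta$-dependence of the round two-sphere factor), I obtain $\sum_{i,j}(\pa_i\pa_j\psi)^2\lesa\sum_{i,j}(\ol{\nabla}_i\ol{\nabla}_j\psi)^2+\sum_k(\pa_k\psi)^2$. The decisive weight-counting step is the reverse of \eqref{eq:ellipticest2}: because $\ol{g}_r$ is diagonal with each inverse entry $\ol{g}_r^{ii}\gtrsim r^{-2}$ uniformly for $r\geq r_0$, the pointwise identity $|\ol{\nabla}^2\psi|^2=\sum_{i,j}\ol{g}_r^{ii}\ol{g}_r^{jj}(\ol{\nabla}_i\ol{\nabla}_j\psi)^2$ yields $\sum_{i,j}(\ol{\nabla}_i\ol{\nabla}_j\psi)^2\lesa r^4|\ol{\nabla}^2\psi|^2$, and hence $\sum_{i,j}(\pa_i\pa_j\psi)^2\lesa r^4|\ol{\nabla}^2\psi|^2+\sum_k(\pa_k\psi)^2$.

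Finally I would assemble the pieces. Integrating the last bound over $\Sigma_r$, applying \eqref{eq:ellipticestck93}, and inserting the curvature estimate from the first step gives
\[
\sum_{i,j}\int_{\Sigma_r}(\pa_i\pa_j\psi)^2\,\de\mu_{\ol{g}_r}\lesa_{\Lambda,m,r_0} r^4\int_{\Sigma_r}(\ol{\Delta}\psi)^2\,\de\mu_{\ol{g}_r}+\sum_{i=1}^3\int_{\Sigma_r}(\pa_i\psi)^2\,\de\mu_{\ol{g}_r},
\]
which is the claimed estimate. I expect the only genuine difficulty to be bookkeeping: checking that every inverse-metric and curvature factor contributes exactly the stated power of $r$, uniformly on $\{r\geq r_0\}$, and in particular handling the coordinate degeneracy of $\pa_\varphi$ at the poles $\theta=0,\pi$. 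This turns out to be harmless, since the $\varphi\varphi$-components of both $\ol{g}_r^{-1}$ and $\text{Ric}[\ol{g}_r]$ only enlarge $|\ol{\nabla}^2\psi|^2$ and the curvature term, so every inequality used above points in the favourable direction.
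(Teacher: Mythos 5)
Your proposal is correct and follows essentially the same route as the paper: apply the Christodoulou--Klainerman estimate \eqref{eq:ellipticestck93} on $(\Sigma_r,\ol{g}_r)$, bound the Ricci term using $|\text{Ric}[\ol{g}_r]^{ij}|\sim r^{-4}$, and convert between the coordinate and covariant Hessians via the $r^{-2}$ scaling of the inverse metric components, absorbing the Christoffel corrections into the first-derivative term. The only difference is one of presentation — you make explicit the direction of the comparison in \eqref{eq:ellipticest2} and the $O(1)$ size of the Christoffel symbols of $\ol{g}_r$, which the paper leaves implicit.
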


\section{Sobolev Estimates}\label{sec:sobolev}
In this appendix we prove a Sobolev estimates that we use to derive pointwise estimates of solutions to the wave equation on Schwarzschild-de Sitter.
\begin{proposition}[Sobolev Estimate for Schwarzschild-de Sitter]\label{SobEst}
Let $(\mc{M}_{\Lambda,m},g)$ be a subextremal member of the Schwarzschild-de Sitter family of spacetimes, and fix $r_0 \geq \rc$. Recall the family of Killing vectors $T$, $\Omega_i:i=1,2,3$ that span the level sets $\Sigma_r$. Then there exists a constant $C = C(\Lambda,m) > 0$ such that for all $r \geq r_0$, we have
\begin{equation}\label{SobolevExact}
\sup_{\Sigma_r}|\varphi|^2 \leq \frac{C}{r^3}\int_{\Sigma_r}\Big\{\varphi^2 + (T\varphi)^2 + \sum_{i=1}^3|\Omega_i \varphi|^2 + \sum_{i=1}^3 |\Omega_i T \varphi|^2 + \sum_{i,j=1}^3|\Omega_i\Omega_j \psi|^2\Big\}\de \mu_{\ol{g}_r}.
\end{equation}
\begin{proof}
It was shown in \cite{Sch22} for a class of spacetimes that includes Schwarzschild-de Sitter, we have for any tensor field $\theta$ that
$$\sup_t \Big(\int_{S_t} r^4|\theta|^4 \de \mu_{\slashed{g}}\Big)^{1/4} \lesa_{\Lambda,m,r_0} \Big(\int_{\Sigma_r} |\theta|^2 + r^2|\overline{\nabla}\theta|^2\de\mu_{\ol{g}_r}\Big)^{1/2},$$
where $S_t$ is the sphere on $\Sigma_r$ on which the coordinate $t$ is constant, and $\de \mu_{\slashed{g}}$ is the induced volume form on $S_t$.
Moreover, in \cite{Sch19} it was shown that 
$$\sup_{S_t}|\varphi|^2 \lesa_{\Lambda,m,r_0} \frac{1}{r}\Big(\int_{S_t}\varphi^4 + r^4|\slashed\nabla \varphi|^4\de \mu_{\slashed{g}}\Big)^{1/2}.$$
Combining these two estimates gives
\begin{align*}
\sup_{\Sigma_r}|\varphi|^2 &\lesa \frac{1}{r}\sup_{t}\Big(\int_{S_t}\varphi^4 + r^4|\slashed\nabla \varphi|^4\de \mu_{\slashed{g}}\Big)^{1/2}\\*
&\lesa \frac{1}{r^3}\int_{\Sigma_r}\varphi^2 + r^2|\ol{\nabla}\varphi|^2 + r^4|\ol{\nabla}\slashed{\nabla}\varphi|^2 \de \mu_{\ol{g}_r}.
\end{align*}
We can bound the covariant derivative restricted to the sphere and the cylinder with spheres of radius $r$ like
\begin{align*}
r^2|\ol{\nabla}\varphi|^2 &\lesa_{\Lambda,m,r_0} |T\varphi|^2 + \sum_{i=1}^3 |\Omega_i \varphi|^2\\*
r^2|\slashed{\nabla}\varphi|^2 &\lesa_{\Lambda,m,r_0} \sum_{i=1}^3 |\Omega_i \varphi|^2,
\end{align*}
and therefore \eqref{SobolevExact} holds.
\end{proof}
\end{proposition}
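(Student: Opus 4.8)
The plan is to prove \eqref{SobolevExact} via an \emph{anisotropic} Sobolev embedding adapted to the product structure $\Sigma_r \cong \RR \times \sph^2$, composing a two-dimensional Sobolev inequality on the spheres $S_t \subset \Sigma_r$ with a one-dimensional trace estimate in the $t$-direction. The reason for proceeding anisotropically, rather than invoking the isotropic embedding $H^2(\Sigma_r)\hookrightarrow L^\infty(\Sigma_r)$ valid in three dimensions, is that the latter would require control of the pure second $t$-derivative $T^2\varphi$, whereas the right-hand side of \eqref{SobolevExact} contains only terms with at most one factor of $T$. The anisotropic scheme will produce precisely the derivative combinations $\varphi,\ T\varphi,\ \Omega_i\varphi,\ \Omega_iT\varphi,\ \Omega_i\Omega_j\varphi$ appearing in the statement, and nothing more.

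First I would record the geometry: in the coordinates $(t,\theta,\varphi)$ the induced metric is $\ol{g}_r = \phi^{-2}\,\de t^2 + r^2\gamma_{\sph^2}$, so that for $r \geq r_0$ it is uniformly comparable to $r^2\tg$, with $\tg = \LT\de t^2 + \gamma_{\sph^2}$ the fixed conformal metric on the cylinder. Consequently $\de\mu_{\ol{g}_r}\sim r^3\,\de t\wedge\de\mu_{\mr{\gamma}}$, the spheres $S_t$ carry the round metric of radius $r$, and $|\Omega_i|_{\ol{g}_r}\sim r$, $|T|_{\ol{g}_r} = \phi^{-1}\sim r$. Since $T,\Omega_1,\Omega_2,\Omega_3$ are Killing fields spanning the tangent space $T\Sigma_r$, these last facts give the pointwise comparisons $r^2|\slashed\nabla\varphi|^2\sim\sum_i|\Omega_i\varphi|^2$ and $r^2|\ol{\nabla}\varphi|^2\sim (T\varphi)^2+\sum_i|\Omega_i\varphi|^2$.

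Next come the two Sobolev steps. On each sphere $S_t\cong\sph^2_r$, the embedding $W^{1,4}(\sph^2)\hookrightarrow L^\infty$ (valid since $4>\dim\sph^2$), rescaled to radius $r$, yields $\sup_{S_t}|\varphi|^2\lesa r^{-1}\big(\int_{S_t}\varphi^4+r^4|\slashed\nabla\varphi|^4\,\de\mu_{\slashed{g}}\big)^{1/2}$. To control the supremum over $t$ of the resulting $L^4$-norms I would apply a one-dimensional trace inequality in the noncompact $t$-direction: for a tensor field $\theta$ on $\Sigma_r$, $\sup_t\big(\int_{S_t}r^4|\theta|^4\big)^{1/4}\lesa\big(\int_{\Sigma_r}|\theta|^2+r^2|\ol{\nabla}\theta|^2\,\de\mu_{\ol{g}_r}\big)^{1/2}$, which follows from the fundamental theorem of calculus in $t$ (a $1$D Sobolev estimate $\|F\|_\infty^2\lesa\|F\|_2^2+\|F'\|_2^2$ applied to $t\mapsto\|\theta\|_{L^4(S_t)}^2$) combined with the sphere estimate; this is the content of the tensorial estimates in \cite{Sch22,Sch19}. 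Applying it with $\theta=\varphi$ and $\theta=\slashed\nabla\varphi$ and substituting back produces $\sup_{\Sigma_r}|\varphi|^2\lesa r^{-3}\int_{\Sigma_r}\big(\varphi^2+r^2|\ol{\nabla}\varphi|^2+r^4|\ol{\nabla}\slashed\nabla\varphi|^2\big)\,\de\mu_{\ol{g}_r}$.

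Finally I would convert the geometric derivatives into Killing-field derivatives using the comparisons above, together with their second-order analogue $r^4|\ol{\nabla}\slashed\nabla\varphi|^2\lesa \sum_i|\Omega_iT\varphi|^2+\sum_{i,j}|\Omega_i\Omega_j\varphi|^2+(\text{lower order})$; here the mixed covariant second derivative $\ol{\nabla}\slashed\nabla\varphi$ differs from the iterated Killing derivatives $\Omega_iT\varphi,\Omega_i\Omega_j\varphi$ only by connection terms, which are lower order and reabsorbed into the first-derivative contributions already present. I expect the principal obstacle to be exactly this weight bookkeeping: ensuring that every power of $r$ conspires to leave the clean overall factor $r^{-3}$, and that the commutator and Christoffel error terms generated when passing from $\ol{\nabla},\slashed\nabla$ to the algebra generated by $T,\Omega_i$ carry enough $r$-decay to be harmless. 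The noncompactness of the $t$-factor, handled by the trace step rather than a naive embedding, is the secondary point requiring care, as it is precisely what keeps the $T^2\varphi$ term out of the final bound.
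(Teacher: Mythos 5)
Your proposal follows essentially the same route as the paper's proof: the two-dimensional Sobolev inequality on the spheres $S_t$ combined with the one-dimensional trace estimate in $t$ (the two estimates the paper cites from \cite{Sch22,Sch19}), followed by the conversion of $\ol{\nabla}$, $\slashed\nabla$ into the Killing fields $T,\Omega_i$ using $|T|_{\ol{g}_r}\sim|\Omega_i|_{\ol{g}_r}\sim r$. The only differences are cosmetic: you sketch derivations of the two cited lemmas rather than quoting them, and you explicitly flag the connection-term bookkeeping in the second-order conversion $r^4|\ol{\nabla}\slashed\nabla\varphi|^2\lesa\sum_i|\Omega_iT\varphi|^2+\sum_{i,j}|\Omega_i\Omega_j\varphi|^2+(\text{lower order})$, which the paper passes over silently.
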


\bibliographystyle{amsplain}
\bibliography{linearwavesonsds}

\end{document}